\tikzset{black node/.style={draw, circle, fill = black, minimum size = 5pt, inner sep = 0pt}}
\tikzset{white node/.style={draw, circle, fill = white, minimum size = 5pt, inner sep = 0pt}}
\tikzset{normal/.style = {draw=none, fill = none}}
\newtheorem{theorem}{Theorem}
\newtheorem{lemma}{Lemma}
\newtheorem{corollary}{Corollary}
\newtheorem{proposition}{Proposition}
\theoremstyle{remark}
\newtheorem{observation}{Observation}
\newtheorem{claim}{Claim}
\theoremstyle{definition}
\newcommand{\N}{\mathbb{N}}
\newcommand{\intv}[2]{\left \{ #1,\dots, #2 \right \}}
\DeclareMathOperator{\powset}{\mathcal{P}}
\newcommand{\gem}{\mathrm{gem}}
\newcommand{\lctr}{\mathbin{\leq_\mathrm{ctr}}} 
\newcommand{\lleq}{\preceq} 
\DeclareMathOperator{\excl}{Excl}
\DeclareMathOperator{\incl}{Incl}
\newcommand{\seqb}[1]{\left \langle #1 \right \rangle} 
\DeclareMathOperator{\rt}{root} 
\DeclareMathOperator{\cycle}{\mathsf{cycle}}
\DeclareMathOperator{\stick}{\mathsf{stick}}
\DeclareMathOperator{\clique}{\mathsf{clique}}
\DeclareMathOperator{\dec}{\mathsf{dec}}
\title{Well-quasi-ordering $H$-contraction-free graphs\thanks{The
    research was partially supported by the Foundation for Polish
    Science (Marcin Kamiński), the (Polish)
    National Science Centre grants SONATA UMO-2012/07/D/ST6/02432
    (Marcin Kamiński and Jean-Florent Raymond) PRELUDIUM
    2013/11/N/ST6/02706 (Jean-Florent Raymond), and by the Warsaw
    Center of Mathematics and Computer Science (Jean-Florent Raymond
    and Théophile Trunck).
    Emails: \href{mailto:mjk@mimuw.edu.pl}{\texttt{mjk@mimuw.edu.pl}},
    \href{mailto:jean-florent.raymond@mimuw.edu.pl}{\texttt{jean-florent.raymond@mimuw.edu.pl}},
    and \href{mailto:theophile.trunck@ens-lyon.org}{\texttt{theophile.trunck@ens-lyon.org}}.
  }}
\author[a]{Marcin Kamiński}
\affil[a]{Institute of Computer Science, University of Warsaw, Poland.}
\author[a,b]{Jean-Florent Raymond}
\affil[b]{LIRMM, University of Montpellier, France.}
\author[c]{Théophile Trunck}
\affil[c]{LIP, ÉNS de Lyon, France.}
\date{}
\begin{document}
\maketitle

\abstract{
A \emph{well-quasi-order} is an order which contains no infinite
decreasing sequence and no infinite collection of incomparable
elements. In this paper, we consider graph classes defined by
excluding one graph as contraction. More precisely, we give a complete
characterization of
graphs $H$ such that the class of $H$-contraction-free graphs is
well-quasi-ordered by the contraction relation.
This result is the contraction analogue of the previous dichotomy theorems of
Damsaschke [\emph{Induced subgraphs and well-quasi-ordering},
Journal of Graph Theory, 14(4):427–435, 1990] on the induced subgraph
relation, Ding [\emph{Subgraphs
  and well-quasi-ordering}, Journal of Graph Theory, 16(5):489–502,
1992] on the subgraph relation, and Błasiok et al.\ [\emph{Induced
  minors and well-quasi-ordering}, ArXiv e-prints, 1510.07135, 2015]
on the induced minor relation.
}

\section{Introduction}
\label{sec:intro}
A \emph{well-quasi-order} is a quasi-order where every decreasing
sequence and every collection of incomparable elements (called
an \emph{antichain}) are finite.
Well-quasi-orders enjoy nice combinatorial properties that can be used
in several contexts, from algebra to computational complexity and
algorithms. Since its introduction more than sixty years ago, the
theory of well-quasi-orders led to major results in Graph
Theory and Combinatorics.  In particular, Kruskal showed in \cite{Kruskal60well} that trees are well-quasi-ordered by homeomorphic embedding, and Robertson and Seymour proved that both the minor relation and the immersion relation are well-quasi-orders on the class of finite graphs \cite{Robertson2004325, Robertson2010181}. Most of the usual quasi-orders on graphs are not well-quasi-orders in general,
though. Given one of these quasi-orders, a natural line of research is to identify
the subclasses that are well-quasi-ordered.
Our work is motivated by the following results.

\begin{theorem}[\!\!\cite{JGT:JGT3190140406}]
  The class of $H$-induced subgraph-free graphs is well-quasi-ordered
  by induced subgraphs
  iff $H$ is an induced subgraph of~$P_4$.\footnote{$P_n$ is the path
    on $n$ vertices, for every $n\in \N$.\label{ft}}
\end{theorem}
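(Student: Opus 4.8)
Since the statement is an equivalence, I would prove the two implications separately, using throughout that the induced subgraph order on finite graphs is well-founded (a proper induced subgraph has strictly fewer vertices), so that ``well-quasi-ordered'' is equivalent to ``having no infinite antichain''. For the direction where $H$ is an induced subgraph of $P_4$, the first step is a reduction: if $H \linm P_4$ then every $H$-free graph is $P_4$-free, because an induced $P_4$ would contain an induced copy of $H$. Hence the $H$-free graphs form a subclass of the $P_4$-free graphs --- the \emph{cographs} --- and as every subclass of a wqo is itself a wqo, it suffices to prove that cographs are well-quasi-ordered by $\linm$.

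To handle cographs I would pass to their \emph{cotrees}: each cograph $G$ is encoded by a rooted tree $T(G)$ whose leaves are the vertices of $G$ and whose internal nodes are labelled ``union'' or ``join'', with two leaves adjacent in $G$ precisely when their least common ancestor is a ``join''. Taking the finite label set $\{\text{union},\text{join},\text{leaf}\}$ under equality (a trivial wqo), Kruskal's tree theorem \cite{Kruskal60well}, in its labelled form, tells us that the cotrees are well-quasi-ordered by the label-preserving, meet-preserving homeomorphic embedding, which I denote $\lleq$. The crucial lemma is then that $T(G_1)\lleq T(G_2)$ implies $G_1 \linm G_2$: the embedding restricts to an injection of $V(G_1)$ into $V(G_2)$, and since it preserves meets and labels, the least common ancestor of the images of two leaves is the image of their least common ancestor and carries the same label; hence adjacency is both preserved and reflected, so the injection is an induced-subgraph embedding. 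Composing, any infinite sequence of cographs yields indices $i<j$ with $G_i \linm G_j$, proving this direction.

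For the converse I would first establish the characterisation that $H$ is an induced subgraph of $P_4$ if and only if both $H$ and its complement $\overline H$ are linear forests (disjoint unions of paths). One implication is immediate, as $P_4$ and $\overline{P_4}=P_4$ are linear forests and both conditions pass to induced subgraphs; for the other, $\Delta(H)\le 2$ together with $\Delta(\overline H)\le 2$ forces $|V(H)|\le 5$, and discarding the only $2$-regular option $C_5$ leaves, after a short finite check, exactly the induced subgraphs of $P_4$. Therefore, if $H$ is not an induced subgraph of $P_4$, then $H$ is not a linear forest or $\overline H$ is not a linear forest. In the former case every cycle $C_n$ with $n>|V(H)|$ is $H$-free, because all proper induced subgraphs of a cycle are linear forests; in the latter case the same observation applied to $\overline H$, combined with the fact that complementation is an automorphism of $(\text{graphs},\linm)$, shows that every $\overline{C_n}$ with $n>|V(H)|$ is $H$-free. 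Since distinct cycles (respectively their complements) are pairwise incomparable, we obtain in either case an infinite antichain of $H$-free graphs, so the class is not a wqo.

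The step I expect to be most delicate is the lemma of the second paragraph: one must set up the labelled cotree embedding so that meets of leaves are genuinely preserved and their labels matched exactly, since it is precisely this --- and not the weaker ancestor relation furnished by an arbitrary topological minor --- that makes a tree embedding translate into an induced-subgraph embedding of the associated cographs. Once this dictionary is in place, both the invocation of Kruskal's theorem and the antichain construction are routine.
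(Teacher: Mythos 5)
This theorem is quoted in the paper as a result of Damaschke and is not proved there, so there is no in-paper argument to compare against; your proposal is a correct, self-contained proof. The forward direction (reduce to cographs, encode by cotrees with labels in the finite wqo $\{\text{union},\text{join},\text{leaf}\}$, apply the labelled, inf-preserving form of Kruskal's theorem, and translate a meet- and label-preserving embedding of cotrees into an induced-subgraph embedding of the underlying cographs) is the standard route and is sound: the ``leaf'' labels force leaves to map to leaves, and inf-preservation is exactly what makes the least-common-ancestor labels, hence adjacency, transfer in both directions. The converse is also correct: your characterisation ``$H$ is an induced subgraph of $P_4$ iff $H$ and $\overline H$ are both linear forests'' checks out ($\Delta(H)\le 2$ and $\delta(H)\ge |V(H)|-3$ give $|V(H)|\le 5$, the $5$-vertex case forces $C_5$, and the $4$-vertex check leaves only $P_4$), and the antichains of long cycles, respectively complements of long cycles, avoid $H$ for the stated reasons. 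One notational caution: you write $\linm$ throughout, but in this paper that symbol denotes the \emph{induced minor} relation, under which your arguments would fail (complementation does not preserve induced minors, and cycles form a chain rather than an antichain since $C_m$ is obtained from $C_n$ by contractions); everything you actually argue is about induced subgraphs, so the symbol should be replaced, but the mathematics is fine.
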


\begin{theorem}[\!\!\cite{Ding:1992:SW:152782.152791}]
  The class of $H$-subgraph-free graphs is well-quasi-ordered by subgraphs
  iff $H$ is a subgraph of~$P_n$, for some $n\in \N$.\textsuperscript{\rm \ref{ft}}
\end{theorem}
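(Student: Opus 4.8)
The plan is to prove both implications after first reformulating the condition on $H$. Observe that a graph is a subgraph of $P_n$ for some $n \in \N$ if and only if it is a \emph{linear forest}, i.e.\ a disjoint union of paths: any subgraph of a path has maximum degree at most $2$ and is acyclic, and conversely a disjoint union of paths embeds into a sufficiently long path by laying its components along the path with a gap between consecutive ones. Thus the statement to prove is that the class of $H$-subgraph-free graphs is well-quasi-ordered by the subgraph relation if and only if $H$ is a linear forest, and I would treat the two directions separately.

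For the \emph{necessity} (the contrapositive of the ``only if''), suppose $H$ is not a linear forest, so $H$ either contains a cycle or has a vertex of degree at least $3$. I would exhibit the infinite antichain formed by the cycles $\{C_n : n \ge 3\}$. These are pairwise incomparable under the subgraph relation, since the only subgraph of $C_m$ that contains a cycle is $C_m$ itself (deleting any edge of a cycle leaves a path), so $C_a$ is a subgraph of $C_b$ only when $a = b$. Moreover each $C_n$ is $H$-subgraph-free: as $C_n$ has maximum degree $2$ and contains a unique cycle, the only way $H \subseteq C_n$ could hold is if $H$ equals the single cycle $C_n$; hence discarding at most one value of $n$ leaves an infinite antichain of $H$-subgraph-free graphs, witnessing that the class is not well-quasi-ordered.

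For the \emph{sufficiency} (the ``if''), suppose $H$ is a linear forest, so $H \subseteq P_n$ for some $n$. The key reduction is that every $H$-subgraph-free graph is then $P_n$-subgraph-free (if $G$ contained $P_n$ it would contain $H$), so it suffices to prove that the class of $P_n$-subgraph-free graphs is well-quasi-ordered by the subgraph relation, since a subclass of a well-quasi-ordered class is again well-quasi-ordered. A graph with no $P_n$ has no path on $n$ vertices, and such graphs have bounded tree-depth: a depth-first-search spanning tree of each connected component has height less than $n$, and every non-tree edge joins a vertex to one of its ancestors. I would then invoke that graphs of bounded tree-depth are well-quasi-ordered by the \emph{induced} subgraph relation, and finally note that this implies well-quasi-ordering by the subgraph relation: any subgraph-antichain is also an induced-subgraph-antichain (because $G \subseteq_{\mathrm{ind}} G'$ implies $G \subseteq G'$), so the absence of infinite induced-subgraph-antichains forbids infinite subgraph-antichains, while there are no infinite strictly descending chains because the graphs are finite.

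The heart of the argument, and the step I expect to be the main obstacle, is the well-quasi-ordering of bounded-tree-depth graphs by induced subgraph. Here I would encode each such graph by its elimination forest of bounded height, labelling every node with the set of its ancestors to which it is adjacent -- a bounded, hence trivially well-quasi-ordered, label set since the height is bounded -- and then apply the labelled version of Kruskal's tree theorem. The delicate point is to set up the labelling and the notion of embedding of labelled trees so that an embedding produced by Kruskal's theorem translates back into an induced-subgraph embedding of the original graphs, respecting both adjacencies and non-adjacencies along root-to-node paths; once this correspondence is established, Kruskal's theorem immediately yields the required well-quasi-ordering.
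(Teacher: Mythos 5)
This theorem is quoted in the paper from Ding's 1992 article and is not proved there, so there is no in-paper argument to compare against; your proposal has to stand on its own, and in substance it does --- it is essentially the standard (and essentially Ding's) argument. The reformulation of ``subgraph of some $P_n$'' as ``linear forest'' is correct, and the necessity direction is clean: the cycles $\{C_n\}_{n \geq 3}$ are pairwise incomparable under the subgraph relation, and at most one of them can contain a fixed non-linear-forest $H$ (none if $H$ has a vertex of degree at least $3$ and no cycle; only $C_{|V(H)|}$ possibly, if $H$ contains a cycle), so infinitely many of them survive inside the class as an infinite antichain. The sufficiency chain ``no $P_n$ subgraph $\Rightarrow$ bounded tree-depth via a DFS forest $\Rightarrow$ wqo by induced subgraph $\Rightarrow$ wqo by subgraph'' is also sound, including the last implication (an antichain for the subgraph relation is an antichain for the induced subgraph relation). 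The one step you should make explicit is exactly the one you flag: labelling each node of the elimination forest only with the set of depths of ancestors to which it is adjacent is not quite enough, because a Kruskal (homeomorphic) embedding need not preserve depth. The fix is to add the node's own depth to the label (the label set remains finite since the height is bounded): a label-preserving embedding is then depth-preserving, so for $u$ an ancestor of $v$ at depth $d$ the image $f(u)$ is forced to be the unique depth-$d$ ancestor of $f(v)$, which makes adjacency to ancestors transfer both ways, while preservation of infima transfers incomparability and hence non-adjacency between unrelated nodes; the resulting map is a genuine induced-subgraph embedding. Alternatively, this lemma is usually proved by induction on the tree-depth using Higman's Lemma with vertex labels recording adjacency to the already-removed roots, which amounts to the same bounded-height tree argument.
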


\begin{theorem}[\!\!\cite{Lui2014}]
  The class of $H$-topological minor-free multigraphs is
  well-quasi-ordered by topological minors iff $H$ is a topological
  minor of $R_n$, for some~$n \in \N$.\footnote{$R_n$ is the multigraph obtained by doubling
    every edge of a path on $n$ edges, for every $n\in \N$.}
\end{theorem}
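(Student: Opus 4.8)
The plan is to prove the two implications separately, preceded by a structural analysis of the class $\mathcal{R}$ of all multigraphs that are topological minors of some $R_n$. First I would characterise $\mathcal{R}$. Since a topological-minor model realises the degree of every branch vertex, and since $R_n$ is outerplanar, has maximum degree $4$, and contains no two vertices joined by three internally disjoint paths, every $H \in \mathcal{R}$ is outerplanar, has maximum degree at most $4$, and has a ``linear'' block structure in which each block is a cycle or a bridge, arranged consecutively as in $R_n$. I would record this description of $\mathcal{R}$, together with the resulting forbidden features (a vertex of degree at least $5$; a $K_4$- or $K_{2,3}$-subdivision, witnessing non-outerplanarity; or a block structure that branches too much), as a self-contained lemma, since both directions rely on it.

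For the implication ``$H \in \mathcal{R} \Rightarrow$ well-quasi-ordered'', the starting point is that $\leq_{\mathrm{tm}}$ is transitive: if $H \leq_{\mathrm{tm}} R_n$ then every $H$-topological-minor-free multigraph is also $R_n$-topological-minor-free. As a subclass of a well-quasi-ordered class is again well-quasi-ordered, it suffices to prove that, for each fixed $n$, the class of $R_n$-topological-minor-free multigraphs is well-quasi-ordered by $\leq_{\mathrm{tm}}$. Here I would first argue that excluding the long ``chain of cycles'' $R_n$ forces bounded tree-width: arbitrarily large tree-width yields a large grid minor, which contains a long staircase of cycles meeting at single vertices and hence a large $R_m$. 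With bounded tree-width in hand, I would fix a tree-decomposition into pieces of bounded size and bounded adhesion, encode each graph by its decomposition tree labelled by its bounded torsos (recording edge multiplicities), and show that a topological minor between two such graphs induces a label-respecting embedding of the labelled decomposition trees. Well-quasi-ordering would then follow from Kruskal's tree theorem together with Higman's lemma applied to the finitely many possible torsos. This is the exact analogue, for $R_n$ and topological minors, of the bounded-tree-depth argument that drives Ding's theorem for $P_n$ and subgraphs.

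I expect the last step of the previous paragraph to be the main obstacle. For the \emph{minor} relation, bounded tree-width already yields well-quasi-ordering, but for the \emph{topological} minor relation one must control the high-degree vertices: a single branch vertex of $H$ may be routed to a high-degree vertex of the host whose incident subdivided paths thread through many bags of the decomposition, so that ``$G$ contains a subdivision of $H$'' does not obviously translate into a clean combinatorial embedding of the decomposition trees. Making this translation faithful---so that Kruskal/Higman can be applied---is precisely where the topological setting is genuinely harder than the minor setting, and I expect the bulk of the technical work to lie there; the exclusion of $R_n$ should be used a second time, to bound the number and the interaction of the high-degree vertices inside each piece.

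For the converse I would argue by contraposition: assuming $H \notin \mathcal{R}$, I would exhibit an infinite antichain of $H$-topological-minor-free multigraphs, which shows that the class is not well-quasi-ordered. The antichain would be built directly from the feature of $H$ witnessing $H \notin \mathcal{R}$ (a vertex of degree at least $5$, a $K_4$- or $K_{2,3}$-subdivision, large tree-width, or a branching block structure); since excluding any fixed topological minor of $H$ yields a subclass of the $H$-topological-minor-free multigraphs, it is enough to produce one infinite antichain avoiding each minimal forbidden feature. The delicate point here---and the reason the statement is nontrivial---is that the obvious candidate families (nested cycles, wheels, stars, chains of cycles) turn out to be \emph{chains} rather than antichains, because subdivision lets a smaller member embed into a larger one; the members must therefore be assembled from rigid, highly connected gadgets whose ``spines'' cannot be stretched, and verifying that no member is a subdivision of another is the crux of this direction.
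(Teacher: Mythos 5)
This theorem is not proved in the paper at all: it is quoted from Liu's PhD thesis as background motivation, so there is no ``paper proof'' to compare against, and your proposal has to stand on its own. It does not. The decisive flaw is in the forward direction, in the step ``excluding $R_n$ as a topological minor forces bounded tree-width.'' This is false. A subdivision of $R_n$ (for $n\geq 2$) has a branch vertex of degree $4$, and in a topological-minor model branch vertices must retain their degree; hence no multigraph of maximum degree $3$ contains $R_2$ as a topological minor. Walls (the degree-$3$ analogues of grids) therefore all lie in the class of $R_2$-topological-minor-free multigraphs, and they have unbounded tree-width. Your appeal to the grid-minor theorem conflates minors with topological minors: a large grid \emph{minor} does not yield the degree-$4$ staircase as a \emph{subgraph}. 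Moreover, even if tree-width were bounded, the ``decomposition tree plus Kruskal plus Higman'' scheme would not close the argument: bounded tree-width classes need not be well-quasi-ordered by topological minors (the cycles with all edges doubled form an infinite antichain of tree-width $3$), so the degree-control difficulty you flag in your third paragraph is not a technical loose end but the entire content of the theorem. The actual proof of this direction is Liu and Thomas's resolution of Robertson's conjecture, which runs through the graph-minors structure theory for excluded topological minors; it cannot be recovered from a bounded-width tree-encoding argument.

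The converse direction is also only a plan, not a proof. You correctly observe that naive families (nested cycles, stars, chains of cycles) are chains rather than antichains under subdivision, but you do not exhibit a single concrete infinite antichain, nor the case analysis showing that every $H$ that is not a topological minor of any $R_n$ admits one consisting of $H$-topological-minor-free multigraphs. Since the whole difficulty of that direction lies in constructing and verifying those rigid gadget families (doubled cycles and their relatives, tailored to each minimal obstruction), the proposal identifies where the work is but does not do it. In summary: one claimed step is outright false, and the remaining steps are acknowledged placeholders for the hard parts of a known, very substantial theorem.
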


\begin{theorem}[\!\!\cite{2015arxivim}]
  The class of $H$-induced minor-free graphs is
  well-quasi-ordered by induced minors iff $H$ is an induced minor of
  the $\gem$ or~$\hat{K_4}$.\footnote{The $\gem$ is the graph obtained by adding a
dominating vertex to $P_4$ and $\hat{K_4}$ is the graph obtained
by adding a vertex of degree 2 to~$K_4$.}
\end{theorem}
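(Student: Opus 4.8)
The statement is a dichotomy, so I would prove the ``if'' and ``only if'' directions separately. The single fact driving both directions is the monotonicity of the map sending $H$ to the class of $H$-induced-minor-free graphs: if $H \linm H'$, then every $H'$-induced-minor-free graph is also $H$-induced-minor-free, since $H' \linm G$ would imply $H \linm G$ by transitivity. Hence $H \linm H'$ forces the class of $H$-induced-minor-free graphs to be a \emph{subclass} of the class of $H'$-induced-minor-free graphs. I would also record that on finite graphs the induced-minor relation is well-founded, because each deletion or contraction strictly decreases the number of vertices; thus being well-quasi-ordered is equivalent to containing no infinite antichain, and both directions reduce to statements about antichains.

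For the ``if'' direction, monotonicity reduces everything to the two extremal cases: it suffices to prove that the class of $\gem$-induced-minor-free graphs and the class of $\hat{K_4}$-induced-minor-free graphs are each well-quasi-ordered, since any $H$ with $H \linm \gem$ or $H \linm \hat{K_4}$ then yields a subclass of a well-quasi-ordered class, which is again well-quasi-ordered. For each extremal class I would first prove a structural decomposition theorem: every graph in the class is built from ``atoms'' drawn from a well-quasi-ordered set (graphs of bounded size, or a finite family enriched with bounded labelling information) by a controlled gluing operation such as substitution along a modular or clique-cutset decomposition, or the successive peeling of dominating vertices. The well-quasi-ordering conclusion would then be assembled by encoding each graph as a tree whose nodes carry atom labels and invoking Kruskal's tree theorem, with Higman's lemma handling the labels, so that an order-preserving embedding of two decomposition trees can be converted into an induced minor of the underlying graphs.

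For the ``only if'' direction I would argue by contraposition, assuming $H$ is an induced minor of neither $\gem$ nor $\hat{K_4}$. I would first determine the \emph{minimal} graphs $O$ that are induced minors of neither forbidden graph. Since both forbidden graphs have five vertices, an induced minor of either has at most five vertices; deleting a vertex from a minimal obstruction must leave an induced minor of one of them, which forces every minimal obstruction to have at most six vertices, so this search is finite and mechanical. Any $H$ satisfying the hypothesis contains some such $O$ as an induced minor, whence by monotonicity the $H$-induced-minor-free class contains the $O$-induced-minor-free class; it therefore suffices to exhibit, for each $O$, one infinite antichain of $O$-induced-minor-free graphs. The antichains I would aim for follow the usual template for the induced-minor order: a long cycle or path decorated with a small rigid gadget, repeated so that no sequence of deletions and contractions carries one member of the family onto another, while the gadget simultaneously certifies that $O$ never appears as an induced minor.

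The main obstacle is the sufficiency direction, and within it the interface between the decomposition and the induced-minor order rather than the decomposition theorem itself. Contractions can merge vertices lying in different pieces of a decomposition, so it is not automatic that an abstract embedding of decomposition trees lifts to a genuine induced minor of the graphs; making the decomposition ``induced-minor-friendly'', and choosing the atoms and labels fine enough that tree embeddings translate faithfully into induced minors, is where I expect the real difficulty to lie. The necessity direction, by contrast, should be laborious but routine once the finite list of minimal obstructions and their antichains is written down.
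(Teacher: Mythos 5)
First, a point of orientation: the paper you were given does not prove this statement. It is quoted, with a citation to \cite{2015arxivim}, as one of the four motivating dichotomies; the paper's own contribution is the analogous dichotomy for the \emph{contraction} order (its Theorem~5). There is therefore no in-paper proof to measure your attempt against, and I can only judge your sketch on its own terms and against the way the paper handles its contraction analogue. At that level your architecture is the standard and correct one, and it does mirror what the paper does for contractions: well-foundedness reduces wqo to the absence of infinite antichains; monotonicity of $H \mapsto (\text{$H$-induced-minor-free graphs})$ reduces sufficiency to the two maximal graphs $\gem$ and $\hat{K_4}$ and necessity to a finite list of minimal obstructions; sufficiency then comes from a structure theorem plus Higman/Kruskal, and necessity from explicit infinite antichains (the paper's Section~3 plays exactly this game for contractions with $\{K_{2,r}\}$ and the antiholes $\{\overline{C}_n\}$, the latter imported from the induced-minor setting). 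Your six-vertex bound on minimal obstructions is correct. One slip: your opening sentence has the inclusion backwards ($H \linm H'$ makes the $H$-induced-minor-free class a \emph{subclass} of the $H'$-induced-minor-free class, not the other way around), though the version you actually use in both directions afterwards is the right one.

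The genuine gap is that everything of mathematical substance is a placeholder. You do not state, let alone prove, a decomposition theorem for $\gem$-induced-minor-free graphs or for $\hat{K_4}$-induced-minor-free graphs; ``atoms from a wqo set glued along a controlled decomposition'' is a description of what such a theorem would look like, not a theorem. You do not exhibit a single one of the infinite antichains needed for necessity, and their existence for \emph{every} minimal obstruction is not routine --- it is precisely the content of the only-if direction, and for some obstructions the ``long cycle with a gadget'' template must be replaced by quite different families. Finally, you correctly identify the hardest point of sufficiency --- converting an abstract embedding of decomposition trees into an actual induced-minor model, when contractions can straddle pieces of the decomposition --- but you offer no mechanism for crossing it. As it stands this is a sound plan of attack whose three missing ingredients each constitute a paper-length piece of work, so it cannot be counted as a proof of the theorem.
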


These results characterize the closed classes defined by one forbidden
substructure that are well-quasi-orders. Like the four containment relations on
graphs mentioned in the above results, the contraction relation is not a
well-quasi-order in general. Let the diamond be the graph obtained from $K_4$ by deleting
an edge. Our main contribution in this direction is the following
result.

\begin{theorem}\label{main}
  The class of connected $H$-contraction-free graphs is well-quasi-ordered by
  contractions iff $H$ is a contraction of the diamond.
\end{theorem}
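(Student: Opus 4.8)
The plan is to prove the two implications separately, each after a monotonicity reduction. The key observation is that if $H' \lctr H$ then every $H'$-contraction-free graph is also $H$-contraction-free, so the class of $H'$-contraction-free graphs is a subclass of the class of $H$-contraction-free graphs; since a subclass of a well-quasi-order is again a well-quasi-order, the \emph{if} direction reduces to the single maximal case $H = \diam$, while the \emph{only if} direction reduces to exhibiting one infinite antichain for each contraction-minimal graph that is not a contraction of the diamond. A short check shows that these minimal obstructions are exactly $P_4$, $K_{1,3}$, $C_4$, the paw, and $K_4$, together with the trivial case of a disconnected $H$ (for which no connected graph has $H$ as a contraction, so the class is all connected graphs).

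For the \emph{if} direction I would first prove the structural lemma that a connected graph $G$ satisfies $\diam \nlctr G$ if and only if every block of $G$ is a complete graph or a cycle. One inclusion is easy: the contractions of a clique are cliques and the contractions of a cycle are cycles (or a single edge or vertex), and the diamond is none of these. For the converse, a $2$-connected graph that is neither complete nor a cycle has two nonadjacent vertices $c,d$ and, having a vertex of degree at least three, enough room to contract onto the diamond with $c,d$ as its nonadjacent (degree-two) pair and all remaining vertices absorbed into the two adjacent (degree-three) branch sets; and since the diamond is $2$-connected, any contraction model of it must live inside a single block (a branch set straddling a cut vertex $v$ would make the model-vertex containing $v$ a cut vertex of the model, unless one side of $v$ is entirely absorbed, in which case the model already lies in the other block).

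Granting this structure, the idea is to encode each diamond-contraction-free connected graph by its block--cut tree, labelling every block-node by its type in $\{\clique,\cycle\}$ and its size in $\N$, and --- this is the delicate ingredient --- recording, for each cycle-block, the cyclic order in which the attached subtrees meet the cycle. These labels range over a well-quasi-ordered set, and the cyclic sequences hanging off cycle-blocks can be linearised (cut at a canonical position) and controlled by Higman's lemma, so Kruskal's tree theorem~\cite{Kruskal60well} yields that the labelled trees are well-quasi-ordered under label-respecting embedding. It then remains to prove the transfer lemma: if one labelled tree embeds into another, then the first graph is a contraction of the second. I expect this transfer lemma to be the main obstacle. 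A single contraction does not merely shrink a clique or a cycle: it may dissolve an entire block and merge it into a neighbour across a cut vertex, and a cycle-block may be contracted into a path, reshuffling its interface with the attached subtrees. One must therefore design the tree order to be simultaneously fine enough to realise all of these operations and coarse enough to remain a well-quasi-order, and verifying that the cycle-block bookkeeping meets both requirements is where the argument is most technical.

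For the \emph{only if} direction I would construct, for each minimal obstruction, an explicit infinite antichain of connected graphs that avoid it as a contraction; the disconnected case is immediate, since there every connected graph qualifies and any one infinite antichain of connected graphs (which exists because contraction is not a well-quasi-order in general) suffices. The natural source of contraction-antichains is families of graphs whose nontrivial contractions are forced to alter a robust invariant, and I expect the genuinely restrictive obstructions to be $C_4$ and the paw: there, contraction-freeness forces every cycle to be short, so more care is needed to produce graphs that are at once $C_4$-contraction-free (respectively paw-contraction-free) and pairwise incomparable. Checking both antichain-ness and $H$-freeness in each case is routine but case-dependent, and is the least conceptually demanding part of the proof; the conceptual weight lies entirely in the structural and transfer lemmas of the \emph{if} direction.
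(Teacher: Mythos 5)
Your ``only if'' direction contains a genuine error: your list of contraction-minimal graphs that are not contractions of the diamond is incomplete, and the omission is fatal to the case analysis. The graph $K_{2,3}$ is such a minimal obstruction: contracting any edge of $K_{2,3}$ yields the diamond (the new vertex becomes adjacent to all three remaining vertices, and the only non-edge left is between the two remaining vertices of the part of size three), so every proper contraction of $K_{2,3}$ is a contraction of the diamond, while $K_{2,3}$ itself, having five vertices, is not. Since the only four-vertex contraction of $K_{2,3}$ is the diamond, none of $P_4$, $K_{1,3}$, $C_4$, the paw or $K_4$ is a contraction of $K_{2,3}$; hence for $H = K_{2,3}$ your reduction produces no antichain at all and the claim is unproved for this $H$. (An antichain does exist --- the antiholes $\overline{C}_n$ have independence number $2$ while $K_{2,3}$ has independence number $3$, and contraction cannot increase independence number --- but that is exactly an argument you have not made.) The paper sidesteps the enumeration of minimal obstructions entirely: it exhibits two explicit infinite antichains, $\{K_{2,r}\}_{r\geq 2}$ and the antiholes, and uses the contrapositive observation that if $(\excl(H),\lctr)$ is a wqo then all but finitely many members of every infinite antichain contain $H$ as a contraction; the first antichain forces $H \lctr D_p$ for some $p$, and the second rules out $p\geq 3$, leaving exactly the contractions of the diamond. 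You would in addition have to actually construct the antichains for your five listed obstructions, which you defer.

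On the ``if'' direction your architecture is sound and your structure lemma is exactly the paper's (the connected graphs excluding the diamond as a contraction are precisely the connected clique-cactus graphs), though your one-line justification of the hard inclusion (``enough room to contract onto the diamond'') hides the real work: the paper needs a careful argument on longest cycles, crossing chords and non-chords, and the absorption of the components lying outside the cycle. For the well-quasi-ordering step you propose labelled block--cut trees and Kruskal's theorem where the paper uses rooted graphs, three constructors ($\stick$, $\cycle$, $\clique$), Higman's Lemma and a minimal-bad-sequence argument; these are morally equivalent, but the transfer lemma you single out as ``the main obstacle'' is exactly the content you leave unproved. The paper's version resolves it cleanly: pieces of the larger sequence not hit by the embedding are contracted to a single vertex and then onto the cycle (or clique), the remaining pieces are contracted componentwise, and rooting makes the cyclic-order bookkeeping you worry about disappear. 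As written, both directions of your proposal are incomplete, and the ``only if'' direction is incorrect as stated.
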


The requirement of connectivity in \autoref{main} is necessary in the
sense that for every graph $H$, the class of (not necessarily
connected) $H$-contraction-free graphs contains the infinite antichain
$\{\overline{K}_{i},\ i \in \N_{\geq h+1}\}$ (where $h = |V(H)|$) and
therefore is not a well-quasi-order. \autoref{main} can be seen as
contraction counterpart of the results mentioned above.

Another line of research when dealing with quasi-orders that are not
well-quasi-orders in general is to look at canonical antichains. An antichain is
\emph{canonical} if for every closed subset $F$ of the quasi-order, $F$ is a
wqo iff $F$ has a finite intersection with this
antichain. Intuitively, a canonical antichain represents all infinite
antichains of a quasi-order.
As shown by the results below, the question of the presence or absence
of a canonical antichain has been studied for several containment
relations and graph classes.

\begin{theorem}[\!\!\cite{Ding20091123}]
  Under the subgraph relation, the class of finite graphs has a
  canonical antichain.
\end{theorem}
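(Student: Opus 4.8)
The plan is to exhibit an explicit antichain $\mathcal{A}$ for the subgraph relation and then verify the two implications hidden in the definition of canonicity. For $\mathcal{A}$ I would start from the two basic families that already witness the failure of the subgraph relation to be a wqo on all finite graphs: the cycles $\{C_n : n \geq 3\}$ and the \emph{double brooms} $\{B_n : n \geq 1\}$, where $B_n$ is the path on $n$ vertices together with two pendant leaves attached at each of its two endpoints. A direct check shows that each is an antichain (a smaller cycle is never a subgraph of a larger one, and a smaller double broom cannot embed into a larger one because its two degree-$3$ endpoints are rigidly separated along the spine), and that members of different families are incomparable, since a cycle is $2$-regular and contains a cycle, whereas a double broom is a tree with a vertex of degree $3$. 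These two families are \emph{not} yet sufficient: tagged spiders and other decorated trees give further antichains that are not dominated by double brooms, so the first genuine task is to assemble a single antichain $\mathcal{A}$ from a correctly chosen list of such cycle-like and tree-like families.

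The easy implication is the necessity of a finite intersection. If $F$ is a subgraph-closed class with $F \cap \mathcal{A}$ infinite, then, $\mathcal{A}$ being an antichain, this infinite subset is itself an infinite antichain contained in $F$, so $F$ is not a wqo. Contrapositively, every subgraph-closed wqo class meets $\mathcal{A}$ finitely; this direction uses nothing beyond the fact that $\mathcal{A}$ is an antichain.

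The substance is the converse: if $F$ meets $\mathcal{A}$ in finitely many graphs, then $F$ is a wqo. Here I would first convert the finiteness of $F \cap \{C_n\}$ into a structural bound: some large $C_n$ is excluded, hence every $G \in F$ has bounded circumference, and therefore $F$ has bounded tree-width. This reduces the problem to classes of bounded tree-width, in which a graph is encoded by a tree-decomposition with bags of bounded size. The finiteness of the intersection of $F$ with the tree-like families of $\mathcal{A}$ should then be translated into a bound on how such a decomposition can branch and grow, so that each $G \in F$ is described by a rooted decomposition tree labeled from a well-quasi-ordered set of bounded-size pieces with controlled branching. One then concludes with the standard machinery: Higman's lemma for the sequences of pieces along a path and Kruskal's tree theorem for the branching, exactly as these tools apply to bounded-width decompositions.

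The main obstacle is the \emph{completeness} of $\mathcal{A}$: proving that these finitely many families detect every infinite antichain. Concretely, one must show that a subgraph-closed class avoiding all but finitely many members of each family admits the bounded, tree-like decomposition described above, and, conversely, that no infinite antichain escapes $\mathcal{A}$. The latter is what forces $\mathcal{A}$ to be a genuinely \emph{cofinal} list of decorated cycles and trees rather than merely the two prototypes above. Verifying simultaneously that this list is an antichain, that it is cofinal, and that ``avoiding it'' is equivalent to the decomposition hypothesis feeding Kruskal and Higman is where essentially all the work lies; once this structural dichotomy is in hand, both implications of canonicity are routine.
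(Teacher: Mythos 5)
This statement is quoted from Ding's paper on canonical antichains and is not proved in the present paper, so your attempt can only be measured against Ding's original argument. Your choice of prototypes is in fact already the answer: Ding's canonical antichain for the subgraph relation is (essentially) the set of cycles $\{C_n\}_{n\geq 3}$ together with the trees $H_n$ obtained from a path on $n$ vertices by attaching two pendant leaves at each endpoint --- exactly your ``double brooms''. Your worry that these two families are ``not yet sufficient'' and must be supplemented by tagged spiders and other decorated trees is unfounded: Ding's 1992 theorem states that a subgraph-closed class is wqo by the subgraph relation if and only if it contains only finitely many $C_n$ and finitely many $H_n$, and canonicity of $\{C_n\}\cup\{H_n\}$ is precisely a restatement of that equivalence (the easy direction being, as you note, immediate from the fact that the set is an antichain). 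So the proposal simultaneously overestimates what the antichain must contain and leaves unproved the one thing that matters.

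That one thing is the hard direction, which you explicitly defer, and the route you sketch for it would not close the gap. Reducing to bounded treewidth (via bounded circumference) and then invoking ``Higman along paths and Kruskal for the branching'' cannot conclude: Kruskal's tree theorem yields well-quasi-ordering under homeomorphic embedding (topological minors), not under the subgraph relation, and the difference between the two relations is exactly where the antichain $\{H_n\}$ lives --- the $H_n$ are trees, hence of treewidth $1$ and well-quasi-ordered by topological minors, yet pairwise incomparable as subgraphs. No bounded-width decomposition fed into Kruskal's theorem can therefore establish the converse implication. What is actually needed, and what Ding proves, is a structure theorem: a subgraph-closed class containing no long cycle and no long $H_n$ has members whose connected components are forced into an essentially linear, caterpillar-like arrangement of bounded pieces, so that Higman's lemma applied to sequences --- where the comparison it produces can genuinely be realized as a subgraph embedding --- suffices, with no residual tree branching to handle. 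That structural dichotomy is the entire content of the theorem and is absent from your proposal.
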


\begin{theorem}[\!\!\cite{Ding20091123}]
  Under the induced subgraph relation, the class of finite graphs does
  not have a canonical antichain.
\end{theorem}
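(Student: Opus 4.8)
The plan is to recast the statement through a cleaner, self-dual criterion for canonicity and then, for each candidate antichain, to exhibit an infinite antichain that \emph{escapes} it. First I would record a reformulation. Write $H \le G$ for ``$H$ is an induced subgraph of $G$'' and $\mathrm{cl}(S)$ for the downward closure of a set $S$. Since the induced-subgraph order on finite graphs has no infinite strictly descending chain, a hereditary class fails to be a wqo exactly when it contains an infinite antichain $B$, and then $\mathrm{cl}(B)$ is a non-wqo hereditary subclass. This yields: an antichain $A$ is canonical if and only if $A$ is infinite and $\mathrm{cl}(B)\cap A$ is infinite for \emph{every} infinite antichain $B$. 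Indeed, the implication ``$F$ wqo $\Rightarrow F\cap A$ finite'' is automatic, as $F\cap A$ is an antichain inside a wqo; and if some infinite antichain $B$ had $\mathrm{cl}(B)\cap A$ finite, then $F=\mathrm{cl}(B)$ would be a non-wqo class with finite $A$-intersection, contradicting canonicity. Consequently, the theorem is equivalent to: for every infinite antichain $A$ there is an infinite antichain $B$ with $\mathrm{cl}(B)\cap A$ finite.

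Next I would build the basic engine from cycles. The graphs $\{C_n : n\ge 3\}$ form an infinite antichain, since for $3\le k<n$ no $k$ vertices of $C_n$ induce a cycle. An induced subgraph of a cycle is either that cycle itself or a disjoint union of paths, and linear forests are well-quasi-ordered under $\le$: coding a linear forest by the multiset of its path lengths, multiset domination over $(\N,\le)$ is a wqo by Higman's lemma and implies induced-subgraph containment, so any induced-subgraph antichain of linear forests is a multiset-domination antichain and hence finite. Thus $\mathrm{cl}(\{C_n\})$ becomes a wqo once the cycles are removed, so its only infinite antichains are cofinite subfamilies of cycles. Applying the reformulation with $B=\{C_n\}$ shows that every canonical antichain must contain infinitely many cycles; since $G\le G'$ iff $\overline{G}\le\overline{G'}$, complementation applied to $B=\{\overline{C_n}\}$ shows it must also contain infinitely many complements of cycles. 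This already exposes the mechanism: complement-related ``reasons'' for non-wqo-ness coexist, and no finite supply of explicit families forces a contradiction on its own.

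The heart of the argument, and the step I expect to be hardest, is the general escaping construction: given an \emph{arbitrary} infinite antichain $A$, produce an infinite antichain $B$ whose members contain, as induced subgraphs, only finitely many elements of $A$, so that $\mathrm{cl}(B)\cap A$ is finite. Passing to a subfamily I may assume $|a_1|<|a_2|<\cdots$. The idea is to manufacture $B$ from graphs structurally incompatible with hosting the $a_i$: using Ramsey's theorem to extract from $A$ a homogeneous pattern, and then assembling $B$ from gadgets (for example high-girth graphs together with their complements, or repeatedly subdivided graphs) chosen so that any induced copy of a large $a_i$ is obstructed while $B$ itself remains an antichain. The difficulty is exactly that $A$ is adversarial -- it may already contain cycles, complements of cycles, forests, split graphs, and so on -- so no fixed family of gadgets works simultaneously; the gadgets must be tuned to the extracted pattern of $A$. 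Controlling this tuning while simultaneously verifying that $B$ is an antichain and that only boundedly many $a_i$ embed is the crux.

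Finally, I would contrast this with the subgraph order, where the analogous phenomenon collapses: there, long cycles furnish a \emph{single} canonical obstruction, which is what permits a canonical antichain to exist. Isolating why complementation and the rigidity of induced containment multiply the obstructions, and making the escaping construction uniform in $A$, is what separates the induced-subgraph case from the subgraph case and is where I would concentrate the technical effort.
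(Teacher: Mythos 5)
This statement is quoted from Ding's paper and is not proved in the text you were given; the closest model for a proof is the argument for \autoref{cactr} in \autoref{sec:ca}, which establishes the analogous result for contractions via \autoref{ding}. Measured against that, your proposal has a genuine gap: everything up to and including the cycle/co-cycle warm-up is correct (your reformulation of canonicity is right, and the observation that a canonical antichain would have to contain infinitely many cycles and infinitely many complements of cycles is sound), but the step you yourself identify as ``the heart of the argument'' --- given an \emph{arbitrary} infinite antichain $A$, construct an infinite antichain $B$ with $\mathrm{cl}(B)\cap A$ finite --- is not carried out. A gesture at Ramsey extraction and ``gadgets tuned to the extracted pattern'' is not a construction, and it is far from clear that this uniform escaping strategy can be made to work against an adversarial $A$; as you note, $A$ may simultaneously contain cycles, co-cycles, and other families, and no candidate gadget family is even proposed for that situation. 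As written, the proof proves only a necessary condition on canonical antichains, not their non-existence.

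The missing idea is the notion of a \emph{fundamental} antichain and the two-level configuration of \autoref{ding}. If $\seqb{a_i}$ is an infinite antichain such that $\incl(\{a_i\})$ is a wqo, then a canonical antichain $A$ must contain (up to equivalence) all but finitely many of the $a_i$ \emph{themselves}: $A\cap \mathrm{cl}(\{a_i\})$ is infinite while $A\cap\incl(\{a_i\})$ is finite. If in addition, for each $i$, one exhibits a fundamental infinite antichain $\mathcal{W}_i$ all of whose members lie strictly above $a_i$ and with no other comparabilities in the whole configuration, the same argument forces $A$ to contain some $a_i$ together with some element of $\mathcal{W}_i$, i.e.\ a comparable pair --- a contradiction. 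This replaces your universal escaping construction by two explicit families, and the only work left is to verify fundamentality and the comparability pattern (for induced subgraphs Ding does this with explicit families; in \autoref{sec:ca} the present paper does the exact analogue for contractions with $\{K_{2,i+1}\}$ and $\{W_{i,q}\}_q$). I would redirect your effort there: your cycle/co-cycle analysis already shows you how to prove that certain closures become wqo once the antichain itself is removed, which is precisely the fundamentality condition you would need.
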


\begin{theorem}[\!\!\cite{Ding20091123}]
  Under the induced subgraph relation, both the class of interval
  graphs and the class of bipartite permutation graphs have a
  canonical antichain.
\end{theorem}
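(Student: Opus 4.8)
The plan is to first reformulate canonicity as a single combinatorial condition, and then to establish that condition for each class by pairing its geometric representation with a Higman-type well-quasi-ordering argument. Since all graphs considered are finite, the induced-subgraph relation is well-founded on any class, so for a hereditary (induced-subgraph-closed) subset $F$, ``$F$ is a wqo'' is the same as ``$F$ contains no infinite antichain.'' Consequently an antichain $A$ of a class $\mathcal C$ is canonical precisely when, for every hereditary $F\subseteq\mathcal C$, $F$ has an infinite antichain iff $F\cap A$ is infinite. The implication ``$F\cap A$ infinite $\Rightarrow F$ not a wqo'' is immediate, because any infinite subset of $A$ is itself an infinite antichain. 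Applying the remaining implication to $F$ equal to the downward closure of an arbitrary infinite antichain shows that canonicity of $A$ is equivalent to the single requirement, which I call $(\star)$: for every infinite antichain $B\subseteq\mathcal C$, infinitely many members of $A$ are induced subgraphs of members of $B$. Indeed, if $(\star)$ holds and $F$ is hereditary and not a wqo, then $F$ contains an infinite antichain $B\subseteq F$; by $(\star)$ and heredity $F$ absorbs the infinitely many members of $A$ lying below $B$, so $F\cap A$ is infinite.

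First I would exhibit, for interval graphs, an explicit family $A=\{A_1,A_2,\dots\}$ obtained by repeating a fixed gadget $n$ times along an interval representation, so that the parameter $n$ is recoverable from the overlap pattern of $A_n$; the analogous family for bipartite permutation graphs is built along the staircase (strong-ordering) representation of these graphs. That each such family is an antichain is a direct check in the representation: an induced embedding of $A_m$ into $A_n$ would have to respect the linearly ordered overlap pattern and would thus force $m=n$.

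The substance is $(\star)$, which I would prove through a width dichotomy. Define on the canonical representation a width parameter $w(G)$ (for interval graphs, a measure of the complexity of the clique-path and its overlaps; for bipartite permutation graphs, the analogous staircase complexity) and prove two facts. First, if a hereditary class has \emph{bounded} width, then it is a wqo: each bounded-width graph is encoded as a finite sequence over a finite alphabet of local types via the linearity of the representation, and Higman's lemma (together with a wqo of the labels) turns any infinite family into a comparable pair whose label-matching witnesses an induced-subgraph embedding. Second, if a graph has \emph{large} width, then its downward closure contains $A_n$ for $n$ growing with the width: a high-complexity representation contains, after a Ramsey-type extraction, a long ``clean'' monotone segment, and such a segment carries $A_n$ as an induced subgraph. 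Granting these, let $B$ be any infinite antichain; if $B$ had bounded width its downward closure would be a bounded-width hereditary wqo containing $B$, which is absurd, so $B$ has unbounded width, whence by the second fact its downward closure meets $A$ infinitely often, giving $(\star)$.

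The hard part will be the large-width implication together with the clean-segment extraction: this is exactly where the specific geometry must be used, since one must prune a tangle of many mutually overlapping intervals (or many crossing pairs in the permutation diagram) down to a single monotone pattern while preserving induced-ness. This Ramsey-style pruning is the technical heart, and it is also what dictates the precise shape of the gadget defining $A$; the bounded-width wqo half, by contrast, should follow routinely from Higman's lemma once the right encoding is fixed.
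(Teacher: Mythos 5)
This statement is one of the background results quoted in the introduction; the paper gives no proof of it and simply cites Ding's article \cite{Ding20091123}, so there is no in-paper argument to compare yours against. Judged on its own terms, your opening reduction is correct and worth keeping: since the induced subgraph relation on finite graphs has no infinite strictly decreasing sequences, an antichain $A$ is canonical exactly when every infinite antichain $B$ of the class has infinitely many elements of $A$ in its downward closure, and your derivation of this equivalence from the definition is sound. The dichotomy strategy you then outline (a linear representation, a width parameter, Higman's lemma on the bounded-width side, and a Ramsey-type extraction on the unbounded-width side) is indeed the natural route for classes with path-like intersection models, and it is in the spirit of how such results are actually proved.

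The genuine gap is that everything beyond the reformulation is asserted rather than proved. You never specify the gadget, so the family $A$ is not actually defined, and the claim that it forms an antichain (``an induced embedding would have to respect the linearly ordered overlap pattern'') is exactly the kind of statement that fails for naive choices: interval graphs admit many automorphisms and re-representations, and an induced copy of $A_m$ inside $A_n$ need not respect any particular interval representation. Likewise, the width parameter $w$ is left undefined, and both halves of the dichotomy --- that bounded width yields a wqo via a finite-alphabet encoding, and that unbounded width forces large members of $A$ below the graph --- are precisely the content of the theorem; without them nothing is established. You candidly flag the Ramsey-style pruning as the technical heart, but a proof must supply it: one must show that any infinite antichain of interval graphs (respectively bipartite permutation graphs) necessarily contains graphs whose representations are ``complicated'' in a way that can be canonically cleaned up to produce the gadgets, and this is where the specific geometry of interval models and strong orderings does all the work. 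As written, the proposal is a plausible plan whose correctness depends entirely on constructions and lemmas that are not carried out.
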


\begin{theorem}[\!\!\cite{2014arXiv1412.2407K}]
  Under the multigraph contraction relation, the class of finite (loopless)
  multigraphs has a canonical antichain.
\end{theorem}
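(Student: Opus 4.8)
The plan is to exhibit one explicit family $\mathcal{A}$ of loopless multigraphs, to verify that it is an antichain for $\lctr$, and then to prove the two implications hidden in the word ``canonical'': that a contraction-closed class $F$ is a wqo if and only if $F \cap \mathcal{A}$ is finite. The definition of canonicity makes one direction essentially free, so the entire weight of the argument falls on constructing the right $\mathcal{A}$ and on the converse.

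For the family itself I would build each member as a cyclic arrangement of identical gadgets: starting from a cycle (the $\cycle$ ingredient), decorate (the $\dec$ ingredient) each of its vertices with a fixed rigid gadget assembled from a small clique (the $\clique$ ingredient) carrying a pendant ``stick'' (the $\stick$ ingredient), and take as $\mathcal{A}$ the resulting multigraphs $\dec_n$ indexed by the length $n \in \N$ of the underlying cycle, together with the degenerate bond-type members that exploit edge multiplicity. The role of the sticks and cliques is precisely to make these multigraphs contraction-rigid: I would prove that $\dec_m \lctr \dec_n$ forces $m = n$ by tracking invariants that contraction can only preserve, never create --- the multiset of pendant/stick structures and the pattern of high-multiplicity blocks --- so that a branch decomposition realising $\dec_m$ as a contraction of $\dec_n$ would have to match the gadgets one-to-one around the cycle and hence equate the two lengths. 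This gadget-counting is where the construction must be chosen with care, since naive decorations (a bare cycle, or a cycle all of whose edges are doubled) merely shorten under contraction and form chains rather than antichains.

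With $\mathcal{A}$ in hand, the easy direction is that if $F$ is contraction-closed and $F \cap \mathcal{A}$ is infinite, then $F$ contains the infinite antichain $F \cap \mathcal{A}$ and so is not a wqo. The substantial direction is the converse: assuming $F \cap \mathcal{A}$ is finite, I would first translate this finiteness into uniform structural bounds on the members of $F$ --- a bound on edge multiplicity coming from the omitted bond-type members, and a bound on the cyclic/long-range complexity (circumference, or the number of high-degree ``branch'' vertices, yielding a path- or tree-like global shape) coming from the omitted decorated cycles $\dec_n$. The key point to nail here is completeness: that $\mathcal{A}$ captures every mechanism by which contraction-incomparability can proliferate, so that no residual source of an infinite antichain survives these bounds.

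Finally, from these bounds I would encode each multigraph of $F$ as a bounded-complexity labelled object --- essentially a bounded graph whose edges are subdivided and whose branches carry labels from a fixed finite (hence trivially well-quasi-ordered) set recording local multiplicities and attached gadgets --- in such a way that the contraction order on $F$ is refined by a tree/sequence embedding order on these encodings. A Higman-/Kruskal-type theorem then yields that the encodings, and therefore $F$, are well-quasi-ordered. The main obstacle is the combination of the two hard steps: proving the completeness of $\mathcal{A}$ (that $\mathcal{A}$-exclusion really does force a tractable decomposition), and designing the encoding so that $\lctr$ is genuinely dominated by the tree-embedding relation --- a delicate matching, because contraction is strictly coarser than the minor relation and bounded tree-width alone is \emph{not} enough to guarantee a contraction-wqo.
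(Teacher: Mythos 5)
This statement is not proved in the present paper --- it is quoted from \cite{2014arXiv1412.2407K} (``Multigraphs without large bonds are well-quasi-ordered by contraction''), whose title already reveals the actual mechanism: the canonical antichain is built from \emph{bonds}, i.e.\ the two-vertex multigraphs $B_k$ consisting of $k$ parallel edges (with a separate device for disconnected multigraphs, since contraction preserves the number of components). These are pairwise incomparable because contraction strictly decreases the number of vertices, and they are \emph{complete}: if a contraction-closed class $F$ of connected multigraphs contains only finitely many $B_k$, then every member of $F$ has bounded bond size (contracting the two shores of a minimal edge cut of size $k$ yields $B_k \in F$), and the hard theorem of that paper --- bounded bonds imply wqo by contraction --- finishes the argument. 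Your proposal inverts this hierarchy: you demote the bonds to ``degenerate bond-type members'' used only to bound edge multiplicity, and place the weight on decorated cycles. But bounded multiplicity is far from bounding bonds --- simple graphs have multiplicity one yet unbounded bonds, and they contain the infinite antichain $\{K_{2,r}\}$ exhibited in \autoref{sec:ia} of this very paper --- so your structural bounds do not force a wqo.

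The fatal flaw, though, is that your core family $\dec_n$ is not an antichain at all. Given $\dec_n$, contract the entire gadget attached at one cycle vertex into that vertex (a fixed connected gadget collapses to a point under contraction; no ``rigidity'' of the gadget can prevent its own annihilation), then contract one incident cycle edge: the result is exactly $\dec_{n-1}$. So your decorated cycles form a descending chain, the same failure mode you correctly diagnosed for bare cycles but did not escape --- your invariants (``the multiset of pendant/stick structures'') are not preserved under contraction in the relevant direction, since contraction can destroy a gadget wholesale. Worse, no repair is possible: any family of cycles decorated with a \emph{fixed} gadget has uniformly bounded bond size (bonds are either inside a gadget or cut the cycle in two edges), so by the main theorem of \cite{2014arXiv1412.2407K} such a family lies in a class that \emph{is} wqo by contraction and therefore cannot contain an infinite antichain. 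In short, the one genuinely load-bearing object in this argument must be the bond family itself, and the decorated cycles can play no role; your sketch also borrows the constructors $\cycle$, $\clique$, $\stick$, $\dec$ from \autoref{sec:wqoccg}, but those are defined for simple rooted clique-cactus graphs and have no bearing on the multigraph statement.
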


We give an answer to this question for the containment relation with
the following result.

\begin{theorem}\label{cactr}
Under the contraction relation, the class of finite graphs does not
have a canonical antichain. 
\end{theorem}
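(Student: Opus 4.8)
My plan is to first reduce \autoref{cactr} to a purely combinatorial construction, and then to carry that construction out using the number of connected components as a contraction-invariant.

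Write ${\downarrow}S=\{G : G\lctr H \text{ for some } H\in S\}$ for the contraction-closure of a set $S$ of graphs. The first step is a reformulation of canonicity. For any antichain $\mathcal A$ and any closed set $F$, if $F$ is a wqo then $\mathcal A\cap F$ is finite, since an infinite subset of the antichain $\mathcal A$ lying in $F$ would be an infinite antichain of $F$. Hence the forward implication in the definition of canonicity holds for \emph{every} antichain, and $\mathcal A$ is canonical iff the converse holds, i.e.\ iff every \emph{non}-wqo closed set meets $\mathcal A$ in an infinite set. As the contraction order is well-founded (a proper contraction strictly decreases the number of vertices), a closed set is non-wqo iff it contains an infinite antichain $B$, and then ${\downarrow}B$ is contained in it. Therefore $\mathcal A$ is canonical iff $\mathcal A\cap{\downarrow}B$ is infinite for every infinite antichain $B$. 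To prove \autoref{cactr} it thus suffices to show: for every infinite antichain $\mathcal A$ there is an infinite antichain $B$ with $\mathcal A\cap{\downarrow}B$ finite.

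The construction rests on two elementary observations. First, the number of connected components $c(\cdot)$ is invariant under contraction, because contracting an edge never merges two components. Second, for every graph $G$ one has $\overline{K}_{c(G)}\lctr G$, obtained by contracting each component of $G$ to a single vertex. In particular, among all graphs with exactly $t$ components, $\overline{K}_t$ lies below every one of them; so if $\overline{K}_t\in\mathcal A$ then $\overline{K}_t$ is the \emph{only} member of $\mathcal A$ with exactly $t$ components.

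Given an infinite antichain $\mathcal A$, set $S=\{t : \overline{K}_t\in\mathcal A\}$ and split into three cases. If $S$ is finite, take $B=\{\overline{K}_t : t\ge 2\}$: then ${\downarrow}B=B$, so $\mathcal A\cap{\downarrow}B$ is exactly the finite set of edgeless members of $\mathcal A$. If $S$ is infinite but co-infinite, take $B=\{\overline{K}_t : t\ge 2,\ t\notin S\}$; again ${\downarrow}B=B$, and as every edgeless member of $\mathcal A$ has its number of components in $S$, the intersection is empty. The delicate case is $S$ co-finite, say $\overline{K}_t\in\mathcal A$ for all $t\ge t_0$. Here I fix some $c^\ast\ge t_0$ and let $B$ be an infinite antichain all of whose members have \emph{exactly} $c^\ast$ components. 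Every contraction of a member of $B$ still has $c^\ast$ components, so ${\downarrow}B$ meets $\mathcal A$ only inside $\mathcal A\cap\{G : c(G)=c^\ast\}$, which by the second observation equals $\{\overline{K}_{c^\ast}\}$; thus $\mathcal A\cap{\downarrow}B$ has at most one element. In all three cases $\mathcal A\cap{\downarrow}B$ is finite, which by the reduction completes the argument.

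The single external ingredient, and the place where care is needed, is the co-finite case: it requires an infinite antichain of graphs with a prescribed large number of components, equivalently an infinite antichain of \emph{connected} graphs (pad each member with $c^\ast-1$ isolated vertices; distinct members stay incomparable, since the isolated vertices cannot be contracted and must be matched to one another). Such an antichain exists — the failure of well-quasi-ordering behind \autoref{main} already furnishes infinite antichains of connected graphs under contraction — so I would simply invoke, or briefly exhibit, one. I expect the only genuinely non-formal step to be isolating $c(\cdot)$ as a contraction-invariant whose canonical small elements are the edgeless graphs; once those two observations are in place, the three-case split is routine, and the reduction converts it directly into the theorem.
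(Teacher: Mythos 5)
Your proof is correct, but it takes a genuinely different route from the paper. The paper invokes Ding's obstruction (\autoref{ding}): it builds two interlocking families of \emph{fundamental} antichains, namely $\seqb{K_{2,i+1}}_{i\geq 3}$ and the graphs $\mathcal{W}_i=\{W_{i,q}\}_{q\geq 3}$, and verifies via a careful analysis of contraction models (\autoref{comp}, \autoref{kpp1}, \autoref{fund}, plus the wqo result for gem-induced-minor-free graphs) that the only comparabilities are $K_{2,i+1}\lctr W_{i,q}$. You instead argue directly from the definition: after the (correct) reformulation that an antichain $\mathcal{A}$ is canonical iff $\mathcal{A}\cap{\downarrow}B$ is infinite for every infinite antichain $B$, you exploit the fact that the number of connected components is a contraction invariant and that $\overline{K}_{c(G)}\lctr G$, so that $\overline{K}_t$ is the unique possible member of $\mathcal{A}$ among graphs with $t$ components; the three-case split on $S=\{t:\overline{K}_t\in\mathcal{A}\}$ then goes through, and the padding argument in the co-finite case (which does need the remark that at most one member of a connected antichain can be $K_1$) is sound. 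What each approach buys: yours is elementary and essentially self-contained, needing only one infinite antichain of connected graphs (e.g.\ $\{K_{2,r}\}$) as input; but it leans entirely on disconnected graphs, where the contraction order is degenerate, so it proves only the literal statement. The paper's construction lives wholly inside connected graphs, so the same argument shows the stronger fact that even the class of \emph{connected} finite graphs has no canonical antichain under $\lctr$ --- a statement your invariant $c(\cdot)$ cannot reach, since it is constant there.
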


The proof of \autoref{cactr} relies on the tools introduced in
\cite{Ding20091123} that can be used to prove that a quasi-order does
not have a canonical antichain.

\paragraph{Organization of the paper.}
The proof of \autoref{main} contains three parts. The first one, given in
\autoref{sec:ia}, is a study of infinite antichains of the contraction
relation from which we can 
deduce that if the class of $H$-contraction-free graphs is
well-quasi-ordered by contractions, then $H$ is a contraction of the
diamond. \autoref{sec:ddcfg} contains the second part which is a
decomposition theorem for diamond-contraction-free graphs. The last part uses this decomposition to show the well-quasi-ordering
result and is presented in \autoref{sec:wqoccg}.
The proof of~\autoref{cactr} is given in \autoref{sec:ca}.
Definitions of the terms and notations
used are introduced in~\autoref{prelim}.

\section{Preliminaries}\label{prelim}

We use the notation $\N_{\geq k}$ for the set $\{i\in \N,\ i \geq k\}$,
for every~$k\in \N$. For every set $S$, we denote by $\powset(S)$ the
collection of subsets of~$S$.

\paragraph{Graphs.}
All graphs in this paper are finite, simple, and undirected. We denote by $V(G)$ the vertex set of a graph $G$ and by $E(G)$ its
edge set. If $X \subseteq V(G)$, the \emph{subgraph of
  $G$ induced by $X$}, which we write $G[X]$, is the graph with vertex
set $X$ and edge set~$E(G) \cap X^2$.
Let $C$ be a (not necessarily induced) cycle in a graph $G$. A pair of
vertices $\{u,v\} \subseteq V(C)$ that are not adjacent in $C$ is
a \emph{chord of $C$ in $G$} if $\{u,v\} \in E(G)$. Otherwise
$\{u,v\}$ is a \emph{non-chord of $C$ in~$G$}.

A vertex $v$ of a graph $G$ is a \emph{cutvertex} if $G \setminus
\{v\}$ has more connected components than~$G$.
A \emph{block} is a maximal subgraph that has no cutvertex. A
\emph{clique-cactus graph} is a graph whose blocks are cycles and
cliques (cf.\ \autoref{cactus} for an example).

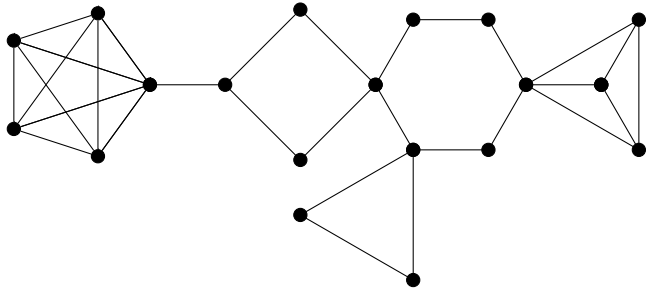
\begin{figure}[h]
  \centering
      \begin{tikzpicture}[every node/.style = black node]
      \draw (0:1) node {} --  (60:1) node {} --  (120:1) node {} --  (180:1) node {} --  (240:1) node {} --  (300:1) node {} -- cycle;
      \draw[xshift = 2cm] (60:1) node {} -- (180:1) node {} -- (-60:1) node {} -- cycle (0,0) node {} -- (60:1) (0,0) node {} -- (-60:1) (0,0) node {} -- (180:1);
      \draw[shift = (240:2)] (60:1) node {} -- (180:1) node {} -- (-60:1) node {} -- cycle;
      \draw[xshift = -2cm] (0:1) node {} -- (90:1) node {} -- (180:1) node {} -- (-90:1) node {} -- cycle;
      \begin{scope}[xshift = -5cm]
        \foreach \i in {0,..., 5}{
          \draw (\i*72:1) node (N\i) {};
        }
        \foreach \i in {0, ..., 5}{
        \foreach \j in {\i, ..., 5}{
          \draw (N\i) -- (N\j);
        }         
        }
        \draw (0:1) -- +(1,0);
      \end{scope}
      
    \end{tikzpicture}
  \caption{A clique-cactus graph.}
  \label{cactus}
\end{figure}

If $G$ is a graph, then $\overline{G}$ is the graph obtained by
replacing all non-edges by edges and vice versa.
For every positive integer $r$ we denote by $D_r$ the graph $\overline{2\cdot
K_1 \cup \cdot K_r}$. In particular, $D_2$ is the diamond. We set $\mathcal{D} = \{D_r,\ r \in \N\}$ (cf.\ \autoref{d2}) and $\mathcal{S} = \{K_{1,r},\  r \in \N\}$.

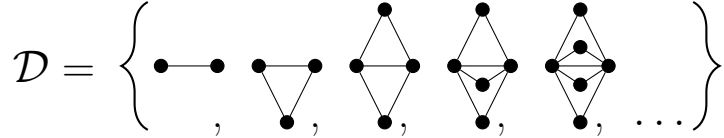
\begin{figure}[h]
  \centering
    \begin{tikzpicture}[every node/.style = black node]
      \begin{scope}[yshift = -2cm]
    \draw[xshift = -1.3cm, scale = 0.5] (-0.75, -0.5) node (a1) {} -- (0.75, -0.5) node (a2) {} (0.75, -2.2) node[normal] {{\Large,}};
    \draw[xshift = 0cm, scale = 0.5] (-0.75, -0.5) node (a1) {} -- (0.75, -0.5) node (a2) {} -- (0,-2) node {} -- (a1)
    (0.75, -2.2) node[normal] {{\Large,}};
    \draw[xshift = 1.3cm, scale = 0.5] (-0.75, -0.5) node (a1) {} --
    (0.75, -0.5) node (a2) {} -- (0,1) node {} -- (a1) -- (0,-2) node {} -- (a2)
    (0.5, -2.2) node[normal] {{\Large,}};
    \draw[xshift = 2.6cm, scale = 0.5] (-0.75, -0.5) node (a1) {} --
    (0.75, -0.5) node (a2) {} -- (0,1) node {} -- (a1) -- (0,-2) node
    {} -- (a2) -- (0,-1) node {} -- (a1)
    (0.5, -2.2) node[normal] {{\Large,}};
    \draw[xshift = 3.9cm, scale = 0.5] (-0.75, -0.5) node (a1) {} --
    (0.75, -0.5) node (a2) {} -- (0,1) node {} -- (a1) -- (0,-2) node
    {} -- (a2) -- (0,-1) node {} -- (a1)-- (0,0) node {} -- (a2)
    (0.5, -2.2) node[normal] {{\Large,}};

    \begin{scope}[xshift = -2.6cm]
      \node[normal] at (-0.5, -0.25) {{\Large$\mathcal{D} = $}};
    \node[normal] at (7.6, -1) {{\Large$\dots$}};
    \draw[thick,decorate,decoration={brace,amplitude=10pt}] (0.75,-1.1)
    -- (0.75,0.6);
    \draw[thick,decorate,decoration={brace,amplitude=10pt}] (8, 0.6) --
    (8, -1.1);
    \end{scope}
  \end{scope}
  \end{tikzpicture}
  \caption{Graphs of $\mathcal{D}$.}
  \label{d2}
\end{figure}

\paragraph{Subsets of vertices.} If $G$ is a graph, the \emph{degree}
of a subset $S \subseteq V(G)$ is the number of vertices of $V(G)
\setminus S$ that have a neighbor in $S$. The subset $S$ is said to be
\emph{connected} if $G[S]$ is connected. We say that $S$ is
\emph{adjacent} to some vertex $v$ (respectively some subset $S'
\subseteq V(G)$) if there is an edge from $v$ to a vertex of $S$
(respectively from a vertex of $S$ to a vertex of $S'$).

\paragraph{Contractions.}
In a graph $G$, a \emph{contraction of the edge $\{u,v\} \in E(G)$} is the
operation which adds a new vertex adjacent to
the neighbors of $u$ and $v$ and then deletes $u$ and~$v$. We say that
a graph $H$ is a \emph{contraction} of a graph $G$ whenever $H$ can be
obtained from $G$ by a sequence of edge contractions, what we denote by~$H \lctr G$.

A \emph{contraction model} of a graph $H$ in a graph $G$ is function
$\varphi \colon V(H) \to \powset(V(G))$ such that:
\begin{enumerate}[(i)]
\item for every $v\in V(H)$, $\varphi(v)$ is connected;
\item $\{\varphi(v),\ v \in V(H)\}$ is a partition of $V(G)$;
\item for every $u,v \in V(H)$, the vertices $u$ and $v$ are adjacent
  in $H$ iff the subsets $\varphi(u)$ and $\varphi(v)$ are adjacent in~$G$.
\end{enumerate}
This definition has several consequences. In particular, the degree of
$v\in V(H)$ in $H$ is at most the degree of $\varphi(v)$ in~$G$.
Also, there is no model of a graph with no dominating vertex in a
graph with a dominating vertex.

It is easy to check that $H$ is a contraction of $G$ iff there is a
contraction model of $H$ in~$G$.
A graph $G$ is said to \emph{exclude a graph $H$ as contraction}, or to be
\emph{$H$-contraction-free}, if $H$ is not a contraction of $G$. We
denote the class of connected $H$-contraction-free graphs by~$\excl(H)$.

We say that a graph $H$ is an \emph{induced minor} of a graph $G$ if
it can be obtained from $G$ by deleting vertices and contracting edges.

\paragraph{Sequences and orders.}
We write $\seqb{s_1, \dots, s_n}$ the sequence containing the
elements $s_1, \dots, s_n$ in this order. For every set $S$, we denote by 
$S^\star$ the set of all finite sequences over $S$, including the
empty sequence.
For any partial order $(A, \lleq),$ we define
the relation $\lleq^\star$ on $A^\star$ as follows: for every $r
=\seqb{r_1,\dots, r_p}$ and $s = \seqb{s_1,\dots, s_q}$ of
$A^\star,$ we have $r \lleq^\star s$ if there is an increasing
function $\varphi \colon \intv{1}{p} \to \intv{1}{q}$ such that for
every $i \in \intv{1}{p}$ we have~$r_i \lleq s_{\varphi(i)}.$ This
generalizes the subsequence relation.

\paragraph{Well-quasi-orders and antichains.}
Given an order $\lleq$ over $S$, a sequence over $S$
is said to be an antichain of $(S, \lleq)$ if its elements
are pairwise incomparable with respect to~$\lleq$.
A \emph{well-quasi-order} (wqo for short) is a quasi-order where every decreasing
sequence and every antichain is finite.

We will use the two classical results stated below.
\begin{proposition}[Folklore]\label{union}
  Let $(A, \lleq)$ be a quasi-order and $B,C \subseteq A$. If both
  $(B, \lleq)$ and $(C, \lleq)$ are wqo, then so is $(B
  \cup C, \lleq)$.
\end{proposition}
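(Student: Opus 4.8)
The plan is to argue directly from the definition of wqo given above, namely that a quasi-order is a wqo exactly when it has neither an infinite decreasing sequence nor an infinite antichain. I would treat these two conditions separately, and in each case reduce an infinite offending sequence in $B \cup C$ to an infinite offending sequence lying entirely in $B$ or entirely in $C$, where it contradicts the assumed wqo property. The single tool needed for this reduction is the infinite pigeonhole principle: if the elements of an infinite sequence are each assigned one of two colours, then infinitely many of them share a colour and hence form an infinite subsequence. Here the colours are ``belongs to $B$'' and ``belongs to $C$'', and every element of $B \cup C$ can be assigned at least one of them (choosing arbitrarily for elements of $B \cap C$).

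Concretely, for the antichain condition I would suppose toward a contradiction that $\seqb{a_1, a_2, \dots}$ is an infinite antichain of $(B \cup C, \lleq)$. By the pigeonhole principle it has an infinite subsequence whose elements all lie in $B$, or all lie in $C$; the two cases are symmetric, so assume the former. Since any subsequence of an antichain is again an antichain, this is an infinite antichain of $(B, \lleq)$, contradicting the hypothesis that $(B, \lleq)$ is a wqo. For the decreasing-sequence condition I would run the identical argument: an infinite strictly decreasing sequence in $B \cup C$ admits, by pigeonhole, an infinite subsequence contained in $B$ or in $C$, and since a subsequence of a decreasing sequence is decreasing, this yields an infinite decreasing sequence in one of the two wqos, again a contradiction. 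Having excluded both infinite antichains and infinite decreasing sequences, I conclude that $(B \cup C, \lleq)$ is a wqo.

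There is no genuinely hard step here, which is exactly why the statement is folklore; the only points to get right are bookkeeping ones. One must observe that both ``being an antichain'' and ``being a decreasing sequence'' are properties inherited by subsequences, and that the possible overlap $B \cap C \neq \emptyset$ causes no trouble since each element needs only a single colour. I would note in passing that an equally clean route is to first invoke the standard equivalent characterization of a wqo---every infinite sequence contains an infinite $\lleq$-increasing subsequence---after which a single application of pigeonhole to an arbitrary infinite sequence over $B \cup C$ suffices; I prefer the direct argument above only because it works verbatim with the definition already fixed in the preliminaries.
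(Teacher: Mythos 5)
Your proof is correct. The paper itself gives no proof of this proposition---it is stated as folklore and used as a black box---so there is nothing to compare against; your pigeonhole argument, splitting both the antichain condition and the decreasing-sequence condition and noting that each property passes to subsequences, is exactly the standard argument one would expect here, and it correctly handles the overlap $B \cap C$ by an arbitrary choice of colour.
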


\begin{proposition}[Higman's Lemma \cite{Higman01011952}]\label{higman}
  $(A, \lleq_A)$ is a wqo, then so is $(A^\star, \lleq^\star_A)$.
\end{proposition}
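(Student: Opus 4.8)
The plan is to use the classical \emph{minimal bad sequence} argument of Nash-Williams. First I would record the standard reformulation of the wqo property: a quasi-order is a wqo if and only if it admits no infinite \emph{bad} sequence, that is, no infinite sequence $\seqb{x_1, x_2, \dots}$ with $x_i \nlleq x_j$ whenever $i < j$. (A quasi-order with no infinite bad sequence can contain neither an infinite antichain nor an infinite strictly decreasing sequence, and conversely.) I would also invoke the auxiliary fact that in any wqo every infinite sequence contains an infinite \emph{increasing} subsequence $x_{i_1} \lleq x_{i_2} \lleq \cdots$; this follows from the wqo property by a short Ramsey-type argument that colours each pair $\{i,j\}$ according to whether $x_i$ and $x_j$ are increasing, decreasing, or incomparable, and rules out the latter two colours on an infinite homogeneous set.

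Suppose toward a contradiction that $(A^\star, \lleq^\star_A)$ is not a wqo, so that a bad sequence exists. I would then build a bad sequence $\seqb{s_1, s_2, \dots}$ that is \emph{minimal} in the following sense: having chosen $s_1, \dots, s_{k-1}$, let $s_k$ be a sequence of \emph{smallest possible length} among all $t \in A^\star$ such that $\seqb{s_1, \dots, s_{k-1}, t}$ extends to an infinite bad sequence. Since lengths are natural numbers, this choice is always available (it quietly uses dependent choice and the well-ordering of $\N$). Each $s_k$ is nonempty, because the empty sequence satisfies $\seqb{} \lleq^\star_A t$ for every $t$ and hence cannot occur in a bad sequence. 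I write $s_k = \seqb{a_k} \cdot t_k$, where $a_k \in A$ is the first letter and $t_k \in A^\star$ is the strictly shorter tail.

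Next I would extract structure from the first letters. Applying the auxiliary fact to $\seqb{a_1, a_2, \dots}$ in the wqo $A$ yields an infinite index set $i_1 < i_2 < \cdots$ with $a_{i_1} \lleq_A a_{i_2} \lleq_A \cdots$. Consider the sequence
\[
\seqb{s_1, \dots, s_{i_1 - 1}, t_{i_1}, t_{i_2}, t_{i_3}, \dots}.
\]
I would verify that it is again bad by excluding three kinds of good pair: a pair within $s_1, \dots, s_{i_1-1}$ is impossible since the original sequence is bad; a pair $s_j \lleq^\star_A t_{i_k}$ with $j < i_1$ is impossible, since $t_{i_k} \lleq^\star_A s_{i_k}$ (it is a tail) would give $s_j \lleq^\star_A s_{i_k}$ with $j < i_k$, contradicting badness; and a pair $t_{i_k} \lleq^\star_A t_{i_\ell}$ with $k < \ell$ is impossible, since together with $a_{i_k} \lleq_A a_{i_\ell}$ it yields $s_{i_k} = \seqb{a_{i_k}} \cdot t_{i_k} \lleq^\star_A \seqb{a_{i_\ell}} \cdot t_{i_\ell} = s_{i_\ell}$, again contradicting badness. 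But this new bad sequence begins with $s_1, \dots, s_{i_1-1}$ followed by $t_{i_1}$, which is strictly shorter than $s_{i_1}$, contradicting the minimal choice of $s_{i_1}$. Hence no bad sequence exists and $(A^\star, \lleq^\star_A)$ is a wqo.

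The delicate points I would take care to justify cleanly are the minimal-bad-sequence construction itself and the badness check for the reassembled sequence, especially the middle case, where one must pass from a tail back up to the full word through the embedding $t_{i_k} \lleq^\star_A s_{i_k}$. The extraction of an increasing subsequence of first letters is precisely where the hypothesis that $(A, \lleq_A)$ is a wqo enters, and it is the engine that makes the two-element comparison of $s_{i_k}$ and $s_{i_\ell}$ go through.
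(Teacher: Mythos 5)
Your proof is correct: it is the classical Nash-Williams minimal bad sequence argument for Higman's Lemma, and all the delicate steps (nonemptiness of each $s_k$, the extraction of an increasing subsequence of first letters via the Ramsey-type argument, and the three-case badness check for the reassembled sequence $\seqb{s_1, \dots, s_{i_1-1}, t_{i_1}, t_{i_2}, \dots}$) are handled properly. Note, however, that the paper does not prove this proposition at all --- it is stated as a known classical result with a citation to Higman, so there is no in-paper argument to compare against; your write-up is simply a correct standalone proof of the cited fact.
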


If $(S, \lleq)$ is a quasi-order that is not a wqo, a \emph{minimal
  antichain} \cite{Nash63onwe} of $(S, \lleq)$ is an antichain $\seqb{a_i}_{i \in \N}$ where for
every $i \in \N$, $a_i$ is a minimal element (with respect to $\lleq$)
such that there is an infinite antichain of $(S, \lleq)$ starting with $\seqb{a_j}_{j \in
\intv{0}{i}}$. Observe that every quasi-order that is not a wqo and
that has no infinite decreasing sequence has a minimal antichain.
For every subset $A \subseteq S$, we define:
\[\incl(A) = \{x \in S,\ \exists y \in A,\ x \lleq y\ \text{and}\ x \neq y\}.\]
An antichain ${A}$
is \emph{fundamental} if $(\incl(A), \lleq)$ is a~wqo. A set $F
\subseteq S$ is said to be $\lleq$-closed if it satisfies the following
property: $\forall x \in F, \forall y
\in S,\ y \lleq x \Rightarrow y \in F$.

An antichain $A$ of a quasi-order $(S, \lleq)$ is \emph{canonical} if
for every $\lleq$-closed subset $F\subseteq S$, we have\[
  F \cap A\ \text{is finite}\iff (F, \lleq)\ \text{is a~wqo}.
\]

Let us end this section by a simple observation.
\begin{observation}
Every sequence of graphs that is decreasing with respect to contraction is finite.
In fact, in such a sequence the number of edges is also decreasing, as
every edge contraction decreases the number of edges of a graph by at least~one.
\end{observation}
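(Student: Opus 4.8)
The plan is to exhibit a quantity that strictly decreases along every proper contraction and thereby reduce the claim to the well-ordering of $\N$. As the statement itself suggests, the right quantity is the number of edges. The key local fact is that if $H$ is obtained from $G$ by contracting a single edge $\{u,v\} \in E(G)$, then $|E(H)| \leq |E(G)| - 1$. Indeed, contracting $\{u,v\}$ identifies $u$ and $v$ into one new vertex $x$: the edge $\{u,v\}$ disappears, every edge avoiding both $u$ and $v$ is unaffected, and every edge from $u$ or $v$ to a vertex $w \notin \{u,v\}$ becomes an edge $\{x,w\}$. Because the graphs here are simple, a vertex $w$ adjacent to both $u$ and $v$ contributes the two edges $\{u,w\}$ and $\{v,w\}$, which are merged into the single edge $\{x,w\}$; thus no edge is ever created, and at least the edge $\{u,v\}$ is lost.

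I would then lift this to the relation $\lctr$. If $H \lctr G$ and $H \neq G$, then $H$ arises from $G$ by a nonempty sequence of edge contractions, so iterating the local fact gives $|E(H)| < |E(G)|$. Consequently, for any sequence $\seqb{G_i}_i$ that is decreasing with respect to $\lctr$ --- meaning $G_{i+1} \lctr G_i$ and $G_{i+1} \neq G_i$ for every $i$ --- the edge counts form a strictly decreasing sequence $|E(G_0)| > |E(G_1)| > \cdots$ of non-negative integers. Since $\N$ is well-ordered, this numerical sequence, and hence $\seqb{G_i}_i$ itself, must be finite.

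There is no real obstacle here: the entire content is the local edge-counting step, and the only point worth stating explicitly is the role of simplicity, which guarantees that collapsing two edges to a common neighbour of the contracted pair removes an edge rather than producing a multi-edge. Everything else follows from the well-ordering of $\N$.
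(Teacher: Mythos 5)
Your proof is correct and follows exactly the argument the paper gives: the number of edges strictly decreases under each contraction (the deleted edge is lost and, since the graphs are simple, parallel edges arising from common neighbours are merged rather than kept), so the edge counts form a strictly decreasing sequence of non-negative integers and the sequence must be finite. The paper states this in one line; you have merely made the edge-counting step explicit.
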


A consequence of the above observation is that infinite antichains are
the only obstacles for a class of graphs to be well-quasi-ordered by
the contraction relation. We deal with them in the next section.

\section{Infinite antichains}
\label{sec:ia}

A simple but crucial observation in the study of the well-quasi-orderability of
classes that are defined by forbidden structures (of any kind) is the
following. If none of the graphs of an infinite antichain
$\mathcal{A}$ contains some graph $H$, then excluding $H$ does not
give a well-quasi-order. Indeed, the class obtained still contains the infinite
antichain $\mathcal{A}$. Let us restate this observation in terms of contractions.

\begin{observation}\label{o:antichains}
  Let $\mathcal{A}$ be an infinite antichain of the contraction
  relation.
  If $(\excl(H), \lctr)$ is a wqo, then all but finitely many graphs of $\mathcal{A}$
  contain $H$ as contraction.
\end{observation}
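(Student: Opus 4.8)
The statement to prove is \autoref{o:antichains}: if $\mathcal{A}$ is an infinite antichain of the contraction relation and $(\excl(H), \lctr)$ is a wqo, then all but finitely many graphs of $\mathcal{A}$ contain $H$ as a contraction. The plan is to argue by contraposition, showing that if infinitely many members of $\mathcal{A}$ fail to contain $H$, then $(\excl(H), \lctr)$ already harbours an infinite antichain and hence cannot be a wqo.

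First I would extract the relevant subcollection. Suppose for contradiction that infinitely many graphs of $\mathcal{A}$ do \emph{not} contain $H$ as a contraction; call this subcollection $\mathcal{A}'$. By definition every graph in $\mathcal{A}'$ is $H$-contraction-free, so $\mathcal{A}' \subseteq \excl(H)$ once we observe that the members are connected (if $\excl(H)$ is restricted to connected graphs, I would either note that the antichain can be taken among connected graphs or work with the appropriate ambient class; the key containment $\mathcal{A}' \subseteq \excl(H)$ is what matters).

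Next I would verify that $\mathcal{A}'$ is still an antichain. This is immediate: $\mathcal{A}'$ is an infinite subcollection of the antichain $\mathcal{A}$, and any subfamily of an antichain is again an antichain, since pairwise incomparability is inherited. Being infinite, $\mathcal{A}'$ is therefore an infinite antichain of $(\excl(H), \lctr)$.

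Finally, the presence of an infinite antichain inside $(\excl(H), \lctr)$ contradicts the assumption that $(\excl(H), \lctr)$ is a wqo, since a wqo by definition contains no infinite antichain. This contradiction establishes the claim. I do not expect any real obstacle here: the observation is essentially a restatement of the definition of wqo combined with the trivial fact that a closed class contains every graph it does not exclude, so the only care needed is bookkeeping about which graphs land in $\excl(H)$ and the (inherited) antichain property of the subfamily.
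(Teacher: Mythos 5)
Your proposal is correct and matches the paper's own (informal) justification, which is exactly the contrapositive argument: the graphs of $\mathcal{A}$ not containing $H$ as a contraction would form an infinite antichain inside $\excl(H)$, contradicting the wqo assumption. Your extra care about connectivity (since $\excl(H)$ consists of connected graphs) is a reasonable bookkeeping point that the paper itself glosses over.
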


For this reason, we deal here with infinite antichains of the
contraction relation.
The simplest one is certainly the class of complete bipartite graphs with one
part of size two: $\mathcal{A}_{K} = 
\{K_{2,r},\ r\in \N_{\geq 2}\}$.


\begin{lemma}
  For every $p,q,p',q' \in \N_{\geq 2}$ such that $p\leq p'$ and $q<q'$, there is no model of $K_{p,q}$ in $K_{p',q'}$.
\end{lemma}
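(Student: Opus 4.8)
The plan is to argue by contradiction: assume a contraction model $\varphi$ of $K_{p,q}$ in $K_{p',q'}$ exists, and derive a vertex-count equality incompatible with $p+q < p'+q'$. Write $A,B$ for the two sides of $K_{p',q'}$ (so $|A| = p'$, $|B| = q'$) and $X,Y$ for the two sides of $K_{p,q}$ (so $|X|=p$, $|Y|=q$).

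First I would classify the connected subsets of $K_{p',q'}$. Since the only edges of $K_{p',q'}$ run between $A$ and $B$, any connected subset is either a single vertex or a set meeting both $A$ and $B$; call the latter \emph{mixed}. The key observation is that a mixed part $\varphi(v)$ is adjacent to \emph{every} other part: by condition~(ii) any other part is nonempty, and any of its vertices lies in $A$ or in $B$, hence is adjacent in $K_{p',q'}$ to a vertex of $\varphi(v)$ on the opposite side. By condition~(iii) of the contraction model, this forces $v$ to be adjacent to every other vertex of $K_{p,q}$, i.e.\ $v$ is a dominating vertex of $K_{p,q}$.

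Next I would invoke $p,q \geq 2$: then $K_{p,q}$ has no dominating vertex, since any vertex of $X$ is nonadjacent to the other $p-1 \geq 1$ vertices of $X$. Consequently no part $\varphi(v)$ can be mixed, so every part is a singleton. Because $\{\varphi(v)\}_{v \in V(K_{p,q})}$ is a partition of $V(K_{p',q'})$ into $|V(K_{p,q})|$ singletons, this gives $p+q = p'+q'$. But $p \leq p'$ and $q < q'$ yield $p+q < p'+q'$, a contradiction, which finishes the proof.

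I expect the only delicate point to be the classification of connected subsets, and in particular the clean statement that a non-singleton (mixed) part is adjacent to all others and therefore corresponds to a dominating vertex; once this dichotomy is in place the argument collapses to a one-line vertex count. It is worth noting that the hypotheses are used only through $p+q < p'+q'$ together with $p,q \geq 2$, so the stated inequalities $p \leq p'$ and $q < q'$ serve merely as a convenient sufficient condition.
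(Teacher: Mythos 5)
Your proof is correct and rests on exactly the same key observation as the paper's: a connected subset of at least two vertices in a complete bipartite graph must meet both sides and is therefore dominating, which is impossible for the image of any vertex of $K_{p,q}$ since $p,q\geq 2$ rules out dominating vertices; the only difference is that you conclude all branch sets are singletons and finish by counting vertices, while the paper directly exhibits a branch set of size at least two from $p'+q'>p+q$ and derives the contradiction there. Your closing remark that only $p+q<p'+q'$ and $p,q\geq 2$ are really needed is accurate and consistent with the paper's argument.
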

\begin{proof}
  Let us assume for a contradiction that there is a model $\varphi$ of
  $K_{p,q}$ in $K_{p',q'}$. As $K_{p',q'}$ has more vertices than
  $K_{p,q}$, there is a vertex $v$ of $K_{p,q}$ such that
  $|\varphi(v)| \geq 2$. Observe that every subset of at least two
  vertices of $K_{p,q}$ that induced a connected subgraph is
  dominating. Indeed, such a subset must contain at least a vertex
  from each part of the bipartition.
  It follows from the definition of a model that $K_{p',q'}$ has a
  dominating vertex, a contradiction. Therefore, there is no model of
  $K_{p,q}$ in $K_{p',q'}$.
\end{proof}

\begin{corollary}
  $\{K_{2,p},\ p\in \N_{\geq 2}\}$ is an antichain of~$\lctr$.
\end{corollary}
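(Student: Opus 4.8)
The plan is to reduce the antichain property to the preceding lemma together with a vertex-counting argument. To show that $\mathcal{A}_K = \{K_{2,p},\ p \in \N_{\geq 2}\}$ is an antichain, I would fix two distinct indices and, without loss of generality, assume $2 \leq a < b$; it then suffices to check that $K_{2,a}$ and $K_{2,b}$ are $\lctr$-incomparable. Since the graphs in the family are pairwise distinct and indexed by $\N_{\geq 2}$, establishing incomparability for every such pair $a < b$ proves the claim.

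First I would rule out $K_{2,a} \lctr K_{2,b}$. Here I would apply the lemma with $(p,q,p',q') = (2,a,2,b)$: the hypotheses $p \leq p'$ and $q < q'$ read $2 \leq 2$ and $a < b$, both of which hold. The lemma then yields that there is no model of $K_{2,a}$ in $K_{2,b}$, and since $H \lctr G$ holds iff there is a contraction model of $H$ in $G$ (as recorded in the Preliminaries), we conclude $K_{2,a} \nlctr K_{2,b}$.

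For the reverse direction I would rule out $K_{2,b} \lctr K_{2,a}$ by counting vertices rather than reapplying the lemma. Note that the lemma is not symmetric in its two inequalities (one strict, one weak), so swapping the roles of $a$ and $b$ would violate the hypothesis $q < q'$. Instead, since every edge contraction strictly decreases the number of vertices, any contraction $H$ of $K_{2,a}$ satisfies $|V(H)| \leq |V(K_{2,a})| = a+2 < b+2 = |V(K_{2,b})|$, so $K_{2,b}$ cannot equal such an $H$. Combining the two directions gives incomparability, and hence the antichain property.

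The only subtle point, and the step most easily overlooked, is that the lemma settles just one of the two directions, precisely because of the asymmetry between the strict inequality $q < q'$ and the weak inequality $p \leq p'$. Spotting that the complementary direction needs an independent (and in fact simpler) vertex-count argument is the crux; beyond that, the proof is a direct instantiation of the lemma.
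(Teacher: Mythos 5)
Your proposal is correct and takes essentially the same approach as the paper, which states the corollary without a written proof as an immediate consequence of the preceding lemma. Your instantiation of the lemma with $(p,q,p',q')=(2,a,2,b)$ for one direction, together with the trivial vertex-count observation for the direction the lemma's asymmetric hypotheses do not cover, is exactly the intended reasoning.
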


Recall that $\mathcal{S}$ is the class of stars and that $\mathcal{D} = \{D_r, r\in \N\}$, where $D_r$ is the graph that can be obtained by contracting an edge of $K_{2, r+1}$, for every $r \in \N$ (cf.\ \autoref{prelim}).
The following will be useful later.
\begin{observation}\label{ctr}
  For every $p \in \N_{\geq 1}$, contracting one edge in $D_p$ gives
  either $D_{p-1}$, or $K_{1,p}$, depending on which edge is contracted.
\end{observation}

As we want to identify graphs $H$ such that $(\excl(H), \lctr)$ is a
wqo, we must consider every graph $H$ such that $\mathcal{A}\cap
\excl(H)$ is finite, for every antichain $\mathcal{A}$. A first step towards this goal is the following observation.

\begin{lemma}
  Let $p \in \N_{\geq 2}$. If $H \lctr K_{2,p}$ and $H \neq K_{2,p}$, then $H\in 
  \mathcal{D} \cup \mathcal{S}$.
\end{lemma}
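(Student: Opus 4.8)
The plan is to argue by induction on the number of edge contractions witnessing $H \lctr K_{2,p}$, using \autoref{ctr} together with the fact recalled just above it that every $D_r$ is obtained from $K_{2,r+1}$ by contracting a single edge. Since each edge contraction strictly decreases the number of edges (cf.\ the observation in \autoref{prelim}), the assumption $H \neq K_{2,p}$ forces the contraction sequence to have length at least one. The first point I would settle is that contracting \emph{any} single edge of $K_{2,p}$ yields $D_{p-1}$: every edge of $K_{2,p}$ joins the two sides of the bipartition, and by symmetry all such edges are equivalent, so every possible first contraction gives the same graph; by the definition recalled above this graph is exactly $D_{p-1} \in \mathcal{D}$.

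The core of the argument is then a closure claim: if $G \in \mathcal{D} \cup \mathcal{S}$ and $G'$ is obtained from $G$ by contracting one edge, then $G' \in \mathcal{D} \cup \mathcal{S}$. For $G = D_r$ this is precisely \autoref{ctr}, which gives $G' \in \{D_{r-1}, K_{1,r}\} \subseteq \mathcal{D} \cup \mathcal{S}$. For $G = K_{1,r}$, every edge joins the centre to a leaf; contracting such an edge merges that leaf into the centre and leaves the star $K_{1,r-1} \in \mathcal{S}$. Combining the first-step computation with this closure claim and inducting on the length of the remaining part of the contraction sequence shows that after the first contraction we sit in $\mathcal{D}$, and every subsequent contraction keeps us inside $\mathcal{D} \cup \mathcal{S}$; hence $H \in \mathcal{D} \cup \mathcal{S}$.

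I do not expect a deep obstacle here: the only points needing care are the symmetry justification that the first contraction always produces $D_{p-1}$ and the (routine) star computation in the closure claim. An alternative, more self-contained route would fix a contraction model $\varphi$ of $H$ in $K_{2,p}$ and exploit that the side of size $p$ is independent, so every branch set of size at least two must contain one of the two vertices of the small side; consequently at most two branch sets are non-trivial. A short case analysis on whether these two vertices lie in the same non-trivial branch set (yielding a star $K_{1,r}$) or in distinct branch sets (yielding some $D_r$), together with the case where $\varphi$ is trivial (giving $H = K_{2,p}$, excluded), recovers the same characterization. For that variant I would expect the bookkeeping of adjacencies between branch sets, rather than any conceptual step, to be the most tedious part.
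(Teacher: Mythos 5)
Your main argument is correct and follows essentially the same route as the paper: the first contraction of $K_{2,p}$ necessarily yields $D_{p-1}$, and then one concludes by the closure of $\mathcal{D}\cup\mathcal{S}$ under single edge contractions via \autoref{ctr} and the (routine) fact that stars contract to stars. You merely make the induction and the symmetry justification for the first step more explicit than the paper does, and your sketched alternative via contraction models is not needed.
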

\begin{proof}
  Given that $H \lctr K_{2,p}$, there is a sequence of contractions
  transforming $K_{2,p}$ into $H$. If this sequence contains only one
  contraction, then it is straightforward that $H = D_{p-1}$. Therefore
  in the other cases $H$ is a contraction of $D_{p-1}$. We get the
  result from \autoref{ctr} and the observation that every contraction
  of a graph of $\mathcal{S}$ (i.e.\ the class of stars) belongs to $\mathcal{S}$.
\end{proof}

\begin{observation}\label{dpgraph}
  For every positive integers $p,q$ such that $p<q$, we have $D_p \lctr K_{2,q}$.
\end{observation}
Indeed, if $F$ is a collection of $q-p$ edges of $K_{2,q}$ that all incident with the same vertex of degree $p$, then
it is easy to check that contracting $F$ in $K_{2,q}$ yields $D_p$.
An immediate consequence of \autoref{dpgraph} is that $\mathcal{A}_K \cap
\excl(D_p)$ is finite for every positive integer~$p$.

From the fact that every graph of $\mathcal{D} \cup \mathcal{S}$ is a
contraction of $D_p$ for some positive integer $p$, \autoref{dpgraph} gives.
\begin{observation}
  If $(\excl(H), \lctr)$ is a wqo, then $H \lctr D_p$ for some $p \in
  \N_{\geq 1}$
\end{observation}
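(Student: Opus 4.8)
The plan is to exploit the infinite antichain $\mathcal{A}_K = \{K_{2,r},\ r \in \N_{\geq 2}\}$ together with \autoref{o:antichains}. Assume that $(\excl(H), \lctr)$ is a wqo. Since $\mathcal{A}_K$ is an infinite antichain of the contraction relation, \autoref{o:antichains} forces all but finitely many of its members to contain $H$ as a contraction. As $\mathcal{A}_K$ is infinite, this yields in particular some (indeed infinitely many) $r \in \N_{\geq 2}$ with $H \lctr K_{2,r}$, which is the foothold for identifying the possible shapes of $H$.

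I would then pin down $H$ using the earlier lemma stating that $H \lctr K_{2,r}$ together with $H \neq K_{2,r}$ imply $H \in \mathcal{D} \cup \mathcal{S}$. It therefore suffices to rule out the remaining possibility $H = K_{2,r}$. Here the argument is that a graph of the form $K_{2,r}$ is a contraction of essentially none of the \emph{other} members of $\mathcal{A}_K$: on one hand $K_{2,r}$ has too many vertices to be a contraction of $K_{2,s}$ when $s < r$, and on the other hand the first lemma of \autoref{sec:ia} (applied with $p = p' = 2$ and $q = r < s = q'$) shows there is no model of $K_{2,r}$ in $K_{2,s}$ when $s > r$. Hence $K_{2,r}$ is contained as a contraction in at most one graph of $\mathcal{A}_K$, which contradicts the conclusion of \autoref{o:antichains} that cofinitely many members of this infinite antichain contain $H$. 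So the case $H = K_{2,r}$ cannot occur under our hypothesis, and we are left with $H \in \mathcal{D} \cup \mathcal{S}$.

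Finally, every graph of $\mathcal{D} \cup \mathcal{S}$ is a contraction of some $D_p$ with $p \in \N_{\geq 1}$: the members of $\mathcal{D}$ are the $D_p$ themselves, while each star $K_{1,p}$ arises from $D_p$ by a single edge contraction (\autoref{ctr}). Combined with $H \in \mathcal{D} \cup \mathcal{S}$, this gives $H \lctr D_p$ for some $p$, as required. The one genuinely delicate point is the elimination of the case $H = K_{2,r}$; everything else is a direct chaining of the quoted lemmas. I expect that step to be the main obstacle, since it hinges on the observation that $\mathcal{A}_K$ remains an obstruction to well-quasi-ordering even after excluding one of its own members.
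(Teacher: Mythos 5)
Your proposal is correct and follows essentially the same route the paper intends: invoke \autoref{o:antichains} with the antichain $\{K_{2,r}\}_{r\geq 2}$ to get $H \lctr K_{2,r}$ for infinitely many $r$, rule out $H = K_{2,r}$ (which the paper leaves implicit but which follows exactly as you argue from the incomparability of the $K_{2,r}$'s), conclude $H \in \mathcal{D} \cup \mathcal{S}$ from the preceding lemma, and finish with the fact that every such graph is a contraction of some $D_p$. The step you flag as delicate is handled soundly.
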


However, we will need another antichain in order to find more
properties that $H$ must satisfy.
Let us consider the antichain of antiholes, which already appeared
in~\cite{2015arxivim} in the context of induced minors:
$\mathcal{A}_{\overline{C}} = \{\overline{C}_i,\ i \in \N_{\geq
  6}\}$ (cf.~\autoref{fig:cocycles}). This connection with the induced minor relation (where edge
contractions and vertex deletions are allowed) is not surprising: as
every contraction is an induced minor, every antichain of the induced
minor relation is also an antichain of the contraction relation.

\begin{figure}[ht]
  \centering
  \begin{tikzpicture}[every node/.style = black node]
    \begin{scope}[xshift = 2cm]
    \foreach \v in {6,...,8}{
      \pgfmathparse{int(3 * (\v - 6))}\let \tmp \pgfmathresult
      \begin{scope}[xshift = \tmp cm]
        \pgfmathparse{360 / \v} \let \angle \pgfmathresult

        \foreach \a in {1,...,\v}
        {
          \node[draw] (N\a) at (\a*\angle:1) {};
        }
        
        \foreach \a in {1,..., \v}{
          \pgfmathparse{\a + \v - 2}\let\ubound\pgfmathresult
          \pgfmathparse{\a + 2}\let\lbound\pgfmathresult
          \foreach \na in {\lbound,...,\ubound}{
            \pgfmathparse{int(mod(\na - 1, \v) + 1)}\let\tmp\pgfmathresult
            \draw (N\a) -- (N\tmp);
          }
        }
        \draw (-90:1.5) node[normal] {$\overline{C}_\v$};
        \draw (1,-1) node[normal] {\Large ,};
      \end{scope}
    }
  \end{scope}
    \node[normal] at (-0.5, 0) {{\Large$\mathcal{A}_{\overline{C}} = $}};
    \draw (10,-1) node[normal] {\Large$\dots$};
    \draw[thick,decorate,decoration={brace,amplitude=10pt}] (0.75,-1)
    -- (0.75,1);
    \draw[thick,decorate,decoration={brace,amplitude=10pt}] (11, 1) --
    (11, -1);
  \end{tikzpicture}
  \caption{Antiholes antichain.}
  \label{fig:cocycles}
\end{figure}
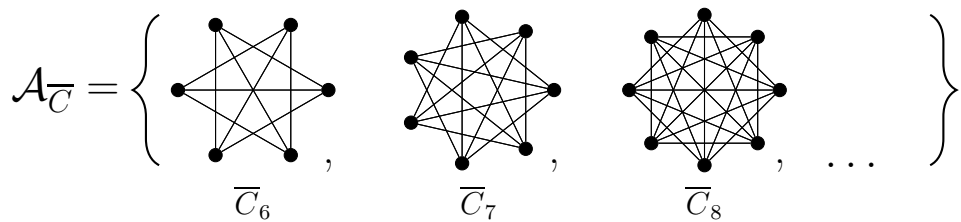

For completeness, we include the following proof.
\begin{lemma}[See also~\protect{\cite[Lemma~1]{2015arxivim}}]
   $\mathcal{A}_{\overline{C}}$ is an antichain of the contraction relation.
\end{lemma}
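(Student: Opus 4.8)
The plan is to establish directly that $\overline{C}_i \nlctr \overline{C}_j$ whenever $6 \le i < j$; since $\overline{C}_j$ has strictly more vertices than $\overline{C}_i$ and a contraction never increases the number of vertices, this yields pairwise incomparability of the whole family. I would argue by contradiction, fixing a contraction model $\varphi$ of $\overline{C}_i$ in $\overline{C}_j$ with branch sets $B_1, \dots, B_i$, where $B_a = \varphi(u_a)$ and $u_1, \dots, u_i$ is the cyclic labelling of $V(\overline{C}_i)$ for which $\{u_a, u_{a+1}\}$ (indices taken mod $i$) are exactly the non-edges. The convenient viewpoint is that the non-edges of an antihole form a single cycle on its vertex set: writing $V(\overline{C}_j) = \{v_1, \dots, v_j\}$, we have $v_a \sim v_b$ iff $a - b \not\equiv \pm 1 \pmod j$, so every vertex has precisely two non-neighbours, namely its two cyclic neighbours.

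The heart of the argument is a description of when two branch sets are non-adjacent. By condition (iii) of the model, $B_a$ and $B_b$ are non-adjacent in $\overline{C}_j$ exactly when every vertex of $B_a$ is a cyclic neighbour of every vertex of $B_b$. As each vertex of $\overline{C}_j$ has only two cyclic neighbours, this already forces $|B_a|, |B_b| \le 2$; moreover I would rule out the case $|B_a| = |B_b| = 2$ using $j \ge 5$, since two vertices at cyclic distance two have a \emph{unique} common cyclic neighbour. Hence a non-adjacent pair of branch sets is either two singletons that are cyclically consecutive, or a singleton $\{v_k\}$ together with the pair $\{v_{k-1}, v_{k+1}\}$ of its two cyclic neighbours.

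The main obstacle, and the step that makes everything close, is to exploit that in $\overline{C}_i$ every vertex has exactly two non-neighbours, so each branch set must be non-adjacent to exactly two others. Suppose some branch set $B$ satisfies $|B| \ge 2$. By the previous paragraph $|B| = 2$, and for each of the two branch sets non-adjacent to $B$ the set $B$ must be the pair of cyclic neighbours of a singleton. But a $2$-set $\{v_{k-1}, v_{k+1}\}$ determines $v_k$ uniquely (again because two vertices at cyclic distance two have a unique common cyclic neighbour once $j \ge 5$), so the two singletons non-adjacent to $B$ would coincide, contradicting that $B$ is non-adjacent to two \emph{distinct} branch sets. Therefore every branch set is a singleton; being singletons that partition $V(\overline{C}_j)$, they number exactly $j$, so $i = j$, contradicting $i < j$.

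I expect the routine parts to be the bookkeeping on how a branch set sits as a union of arcs of the cycle together with the small-case checks (the thresholds $j \ge 5$ or $j \ge 6$), while the genuinely load-bearing observation is the uniqueness of the common cyclic neighbour: it simultaneously eliminates the $(2,2)$ configuration and forces the two singletons surrounding a $2$-set to be equal, which is precisely what the two-non-neighbour constraint cannot tolerate.
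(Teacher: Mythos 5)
Your proof is correct and rests on the same key facts as the paper's: by pigeonhole some branch set has at least two vertices, two vertices of an antihole have at most one common non-neighbour (your ``unique common cyclic neighbour''), so that branch set can be non-adjacent to at most one other branch set, while the preimage vertex demands two. The paper reaches the contradiction directly from this counting and skips your fuller classification of non-adjacent pairs of branch sets, but the argument is essentially identical.
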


\begin{proof}
Towards a contradiction, let us assume that there is a contraction
model $\varphi$ of $\overline{C}_p$ in $\overline{C}_q$ for some
integers $p,q \in \N_{\geq 3}$ such that $p<q$. Recall that the image
of $\varphi$ is a partition of $\overline{C}_q$. As $|\overline{C}_p| <
|\overline{C}_q|$, there is a vertex $v$ of $\overline{C}_p$ such that
$|\varphi(v)| \geq 2$. Observe that for every choice of two
vertices of $\overline{C}_p$ there is at most one vertex which is not
adjacent to one of them. Therefore, there is at most one set in $\{\varphi(u),\ u \in
V(\overline{C}_p)\setminus \{v\}\}$ that is not adjacent to~$\varphi(v)$.
This contradicts the fact that
$\varphi$ is a model of $\overline{C}_p$ in $\overline{C}_q$ as every
vertex of $\overline{C}_q$ is adjacent to all but two
vertices. Consequently there is no contraction model of $\overline{C}_p$ in $\overline{C}_q$, for every
integers $p,q \in \N_{\geq 3}$, $p<q$.
\end{proof}

Again, we look at graphs $H$ such that $\excl(H) \cap
\mathcal{A}_{\overline{C}}$ is finite. As a wqo must contain none of
$\mathcal{A}_{K}$ and $\mathcal{A}_{\overline{C}}$, it is enough to
consider graphs such that $\excl(H) \cap
\mathcal{A}_{\overline{C}}$ is finite among those for which
$\excl(H)\cap  \mathcal{A}_{K}$ is finite.

\begin{lemma}
  If $p \geq 3$ then $\excl(D_p) \cap \mathcal{A}_{\overline{C}}$ is infinite.
\end{lemma}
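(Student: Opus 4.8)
The plan is to prove the stronger statement that for every $p \geq 3$ and every $n \geq 6$ we have $D_p \nlctr \overline{C}_n$; this immediately gives $\mathcal{A}_{\overline{C}} \subseteq \excl(D_p)$, so that $\excl(D_p) \cap \mathcal{A}_{\overline{C}}$ is the whole antichain $\mathcal{A}_{\overline{C}}$, which is infinite. The starting point is to read off the relevant structure of $D_p = \overline{2K_1 \cup K_p}$: its complement being two isolated vertices together with a $K_p$, the graph $D_p$ consists of two dominating vertices together with an independent set of $p$ vertices, each of the latter adjacent exactly to the two dominating ones. Consequently, in any contraction model $\varphi$ of $D_p$ in a graph $G$, the images of the $p$ vertices of this independent set form a family of $p$ pairwise disjoint, nonempty, connected, and pairwise \emph{non-adjacent} subsets of $V(G)$. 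Thus it suffices to show that $\overline{C}_n$ admits no such family of size $3$.

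The heart of the argument is therefore the following combinatorial lemma, which I would isolate and prove first: if $X_1, \dots, X_m$ are pairwise disjoint, nonempty, connected, and pairwise non-adjacent subsets of $V(\overline{C}_n)$, then $m \leq 2$. To see this, set $U = X_1 \cup \dots \cup X_m$. Since no edge runs between distinct $X_i$ and each $X_i$ is connected, the sets $X_i$ are exactly the connected components of $\overline{C}_n[U]$, which therefore has $m$ components. Now I would use the identity $\overline{C}_n[U] = \overline{C_n[U]}$ (taking an induced subgraph commutes with complementation) together with the fact that $C_n[U]$, being an induced subgraph of a cycle, is either the whole cycle $C_n$ (when $U = V(\overline{C}_n)$) or a disjoint union of paths. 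This splits into three cases: if $C_n[U]$ is disconnected then its complement is connected, forcing $m = 1$; if $C_n[U] = C_n$ then $\overline{C}_n[U] = \overline{C}_n$ is connected (as $n \geq 6$), again giving $m = 1$; and if $C_n[U]$ is a single path $P_{|U|}$, a direct check shows that $\overline{P_k}$ has at most two connected components for every $k$, so $m \leq 2$. In every case $m \leq 2$.

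Combining the two steps finishes the proof: no family of three pairwise non-adjacent connected sets exists in $\overline{C}_n$, so a contraction model $\varphi$ of $D_p$ with $p \geq 3$ cannot exist, and hence $D_p \nlctr \overline{C}_n$ for all $n \geq 6$. The only place any ingenuity is needed — and thus the main obstacle — is recognizing that the entire obstruction lives in the large independent set of $D_p$ and translating the condition ``pairwise non-adjacent'' through complementation into a statement about induced subgraphs of a cycle; once this is done the rest is bookkeeping. The routine points to keep under control are the degenerate case $U = V(\overline{C}_n)$ (which does not actually occur, since the two dominating images are nonempty) and the two elementary facts invoked, namely that the complement of a disconnected graph is connected and that the complement of a path has at most two components.
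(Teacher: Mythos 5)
Your proof is correct and is essentially the paper's argument in expanded form: the paper simply observes that contracting edges cannot increase the independence number, that $\alpha(\overline{C}_n)=2$, and that $\alpha(D_p)=p\geq 3$; your family of pairwise disjoint, connected, pairwise non-adjacent branch sets is exactly the witness behind that monotonicity, and your component-counting lemma via complementation amounts to the computation $\alpha(\overline{C}_n)=2$. So the approach matches the paper's, just with its one-line independence-number observation unpacked into an explicit (and somewhat more laborious than necessary) case analysis.
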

\begin{proof}
  For every $p \geq 3$, then graph $D_p$ has independence number at
  least 3. Let $q>p$. As contracting edges can only decrease the independence
  number, there is no sequence of contractions transforming
  $\overline{C}_q$ (which has independent number 2) to $D_p$, for
  every integer $q > p$. Therefore $\overline{C}_q \in \excl(D_p)$,
  for every integer $q>p$.
\end{proof}

\begin{corollary}\label{direct}
  If $(\excl(H), \lctr)$ is a wqo, then $H \lctr D_2$.
\end{corollary}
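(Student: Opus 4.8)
The plan is to use the two antichains $\mathcal{A}_K$ and $\mathcal{A}_{\overline{C}}$ already studied to pin down $H$ up to a short finite list. Assume $(\excl(H),\lctr)$ is a wqo. The (unnumbered) observation above already yields $H \lctr D_p$ for some $p \in \N_{\geq 1}$, so the first step is to turn this into a structural statement about $H$. The class $\mathcal{D}\cup\mathcal{S}$ is closed under a single edge contraction: each contraction of a $D_j$ gives either $D_{j-1}$ or the star $K_{1,j}$ by \autoref{ctr}, and each contraction of a star is again a star (as noted earlier). Iterating this, every contraction of $D_p$ belongs to $\mathcal{D}\cup\mathcal{S}$, and hence $H = D_j$ or $H = K_{1,j}$ for some $j \le p$.

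Next I would bound the independence number of $H$ using the antihole antichain. Since $(\excl(H),\lctr)$ is a wqo, \autoref{o:antichains} applied to $\mathcal{A}_{\overline{C}}$ guarantees that all but finitely many antiholes contain $H$ as a contraction, so in particular $H \lctr \overline{C}_q$ for some $q \in \N_{\geq 6}$. Arguing exactly as in the preceding lemma — edge contractions never increase the independence number and every antihole has independence number $2$ — this forces $\alpha(H) \le 2$, where $\alpha$ denotes the independence number.

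Feeding this bound into the structural description finishes the argument. Both families satisfy $\alpha(D_r) = \alpha(K_{1,r}) = r$ for $r \ge 1$ (with $\alpha(D_0)=\alpha(K_{1,0})=1$), so the constraint $\alpha(H) \le 2$ restricts $H$ to the six graphs $D_0, D_1, D_2, K_{1,0}, K_{1,1}, K_{1,2}$. I would then verify, again via \autoref{ctr}, that contracting edges of $D_2$ yields $D_1$ (and hence $D_0$) as well as $K_{1,2}$ (and hence the smaller stars); consequently each of these six graphs is a contraction of $D_2$, which gives $H \lctr D_2$ and proves the statement.

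All of these steps are short, and the only one needing genuine care is the second: one must be sure the wqo hypothesis really produces an antihole with $H \lctr \overline{C}_q$ before the independence-number comparison can be invoked, and one must compute $\alpha(D_r)$ and $\alpha(K_{1,r})$ correctly. The remainder is routine bookkeeping over the finite candidate list.
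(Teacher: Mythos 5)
Your proposal is correct and follows essentially the same route as the paper: the antichain $\mathcal{A}_K$ (via the unnumbered observation) pins $H$ down to $\mathcal{D}\cup\mathcal{S}$, and the antihole antichain $\mathcal{A}_{\overline{C}}$ together with the independence-number argument (which is exactly the paper's proof that $\excl(D_p)\cap\mathcal{A}_{\overline{C}}$ is infinite for $p\geq 3$) eliminates everything except contractions of $D_2$. Your explicit enumeration of the six candidates and the check that each is a contraction of $D_2$ is just a slightly more detailed write-up of the step the paper leaves implicit.
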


The next sections are devoted to graphs not containing $D_2$ as
contraction. We will first prove a decomposition theorem for the
graphs in this class, which we will use to show that $(\excl(D_2),
\lctr)$ is a wqo.

\section{On graphs with no diamond}
\label{sec:ddcfg}

In this section we show that graphs in $\excl(D_2)$ have a simple
structure. More precisely, we prove the following lemma. Recall that clique-cactus graphs are the graphs whose blocks are cycles or cliques.

\begin{lemma}\label{dec}
  Graphs of $\excl(D_2)$ are exactly the connected clique-cactus graphs.
\end{lemma}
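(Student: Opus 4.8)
The plan is to prove both inclusions, relying on the fact that both families in play are closed under edge contraction (for $\excl(D_2)$ this is immediate from transitivity of $\lctr$: a contraction of a $D_2$-contraction-free graph cannot have $D_2$ as a contraction).

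\textbf{The easy direction} (connected clique-cactus $\Rightarrow \excl(D_2)$). First I would check that the family of clique-cactus graphs is closed under edge contraction. Any edge $e$ lies in a single block $B$. If $B$ is a clique, then $B/e$ is a smaller clique; if $B$ is a cycle, then $B/e$ is a shorter cycle (or a single edge); and if $B=K_2$ is a bridge, contracting $e$ merely identifies the two block-trees hanging at its endpoints. In each case the remaining blocks are untouched and, using that two distinct blocks meet in at most one vertex, no new $2$-connected subgraph is created by the contraction, so $G/e$ is again a clique-cactus graph. Now the diamond $D_2$ is $2$-connected (it is a single block) and is neither a cycle nor a clique, hence it is \emph{not} a clique-cactus graph; since clique-cactus graphs are closed under contraction, none of them can contract to $D_2$. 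Thus every connected clique-cactus graph lies in $\excl(D_2)$.

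\textbf{The hard direction}, which I prove by contrapositive: if $G$ is connected and not a clique-cactus graph, then $D_2 \lctr G$. Such a $G$ has a block $B$ that is neither a cycle nor a clique. The key reduction is an extension step: it suffices to show $D_2 \lctr B$, because then $D_2 \lctr G$. Indeed, each connected component of $G - V(B)$ attaches to $B$ through exactly one vertex (a standard property of blocks), so from a contraction model of $D_2$ in $B$ I can absorb every such component into the part containing its attachment vertex; this keeps all parts connected and introduces no adjacency between parts that was not already present inside $B$, so the extended model still realises $D_2$. The problem therefore reduces to a single $2$-connected graph $B$ that is neither a cycle nor a clique, for which I must show $D_2 \lctr B$.

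\textbf{The core and the main obstacle.} A $2$-connected graph that is not a cycle has a vertex of degree at least $3$ and, via an ear decomposition (or Menger's theorem), contains a \emph{theta}: two vertices joined by three internally disjoint paths, at least two of which carry an internal vertex since $B$ is simple. Contracting these two paths each down to a single internal vertex, and the third down to an edge, turns the theta into a diamond; this already exhibits $D_2$ as a contraction of a subgraph of $B$. The genuine difficulty is that a contraction must \emph{partition all of} $V(B)$ while realising \emph{exactly} $D_2$, i.e.\ the two non-adjacent vertices of the diamond must remain non-adjacent, and naively dumping the leftover vertices into the parts can join them. I would resolve this by induction on $|V(B)|$: the base case $|V(B)|=4$ forces $B$ to be the diamond itself, and for $|V(B)|\ge 5$ I look for an edge $e=\{u,v\}$ with $B-u-v$ connected (so that $B/e$ is again $2$-connected) and such that $B/e$ is still neither a cycle nor a clique; then $D_2 \lctr B/e \lctr B$ by induction. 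The step I expect to be the real obstacle is proving that such a reducing edge always exists — equivalently, that the only $2$-connected graphs in which every $2$-connectivity-preserving contraction lands in the class (cycle or clique) are the cycles and cliques themselves. Handling the few ``stuck'' configurations, where no reducing edge is available, would require exhibiting a diamond-realising partition of all of $V(B)$ directly, and this is where the covering/exact-adjacency constraint must be met by hand.
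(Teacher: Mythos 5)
Your easy direction and your reduction to a single $2$-connected block are both sound: clique-cactus graphs are indeed closed under contraction and $D_2$ is $2$-connected but neither a cycle nor a clique, and a connected component of $G-V(B)$ does attach to a block $B$ at exactly one vertex, so it can be absorbed into the part of a model containing that vertex without creating unwanted adjacencies. The problem is that the entire mathematical content of the lemma sits in the step you leave open, namely that every $2$-connected graph $B$ that is neither a cycle nor a clique satisfies $D_2 \lctr B$. Your proposed induction hinges on the existence of an edge $e$ such that $B/e$ is still $2$-connected and still neither a cycle nor a clique, and you explicitly say you do not know how to prove this (or how to handle the ``stuck'' configurations where it fails). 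As you yourself observe, finding a theta and contracting it only exhibits $D_2$ as a contraction of a \emph{subgraph}; the obstacle of partitioning \emph{all} of $V(B)$ while keeping the two non-adjacent vertices of the diamond non-adjacent is precisely what must be overcome, and it is not. So the proposal is a correct framing with the central step missing, not a proof.

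For comparison, the paper closes this gap by a different route. It first proves a ``crossing'' lemma (a cycle with at least one chord and one non-chord has a crossing chord/non-chord pair), then shows that in a $D_2$-contraction-free graph every cycle is either induced or induces a clique: given a crossing chord $\{u,u'\}$ and non-chord $\{v,v'\}$, the two arcs of $C\setminus\{v,v'\}$ are contracted to single vertices $x,y$, and the leftover components of the graph are absorbed one by one onto $x$, $y$, $v$ or $v'$ in a carefully chosen order, with the residual components adjacent to both $v$ and $v'$ handled by an explicit final sequence of contractions that preserves the non-adjacency of $v$ and $v'$ --- this is exactly the covering/exact-adjacency difficulty you flagged, resolved by hand. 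The $2$-connected case is then settled by taking a \emph{longest} cycle $C$ and showing $V(G)=V(C)$, in each of the two cases (induced cycle, clique) producing a cycle with both a chord and a non-chord to reach a contradiction. If you want to complete your write-up, you would either need to carry out this longest-cycle analysis or actually prove your reducing-edge claim; neither is routine.
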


The proof of \autoref{dec} will be given after a few lemmas.
If $C$ is a cycle of a graph $G$ and $\{u,v\},
\{u',v'\}\subseteq V(C)$, we say that $\{u,v\}$ and $\{u',v'\}$ are
\emph{crossing in $C$} if $u,v,u',v'$ are distinct and are appearing in this order on the cycle.

\begin{lemma}\label{chord}
  Let $G$ be a graph and let $C$ be a cycle in $G$. If $C$ has at
  least one chord and one non-chord in $G$, then it has one chord and
  one non-chord that are crossing in $C$.
\end{lemma}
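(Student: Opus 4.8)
The plan is to argue directly, fixing one chord and then performing a clean case split; no induction and no extremal minimisation are needed. Let $e=\{a,b\}$ be a chord of $C$. Since $a$ and $b$ are non-adjacent on $C$, the two arcs of $C$ joining them each contain at least one internal vertex; write $A$ and $B$ for the two (nonempty) sets of internal vertices, so that $V(C)$ is the disjoint union of $\{a,b\}$, $A$, and $B$. The key observation is that for any $x\in A$ and $y\in B$ the pair $\{x,y\}$ is a genuine diagonal of $C$ (its endpoints are never consecutive on $C$) and it crosses $e$, since $x$ and $y$ separate $a$ from $b$ on the cycle. Call such a pair \emph{transversal}.

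First I would dispose of the easy case: if some transversal pair $\{x,y\}$ is a non-chord, then it is a non-chord crossing the chord $e$, and we are done. So assume instead that every transversal pair is an edge of $G$, i.e.\ $A$ is completely joined to $B$. Now bring in a non-chord $f=\{c,d\}$, which exists by hypothesis. Being a non-edge, $f$ cannot be transversal, hence it does not cross $e$, and therefore $c$ and $d$ lie on a common closed arc, say $\{a\}\cup B\cup\{b\}$. This arc is a sub-path of $C$, so adjacency along it coincides with adjacency on $C$; since $c,d$ are non-adjacent on $C$ there is a vertex $z$ strictly between them on this sub-path, and necessarily $z\in B$. Choosing any $x\in A$, the pair $\{z,x\}$ is transversal, hence a chord, and $z$ lies on the sub-arc of $f$ contained in $B$ while $x$ lies on the opposite arc, so $\{z,x\}$ separates $c$ from $d$ and thus crosses $f$. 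This produces a chord crossing a non-chord, completing the argument (the symmetric situation $c,d\in\{a\}\cup A\cup\{b\}$ is handled identically).

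The only point requiring care — and the step I would write out most carefully — is this second case, where one must verify that the manufactured pair $\{z,x\}$ genuinely crosses $f$: that the separating vertex $z$ falls into the arc interior $B$ (rather than landing at $a$ or $b$), and that the two endpoints $z$ and $x$ sit on opposite arcs determined by $f$. Both facts follow from $f$ being a non-chord (so its endpoints are non-adjacent on $C$, forcing an internal vertex strictly between them along the sub-path) together with $A$ and $B$ lying on opposite sides of $e$; everything else is routine bookkeeping about the cyclic order. The whole proof thus reduces to the single dichotomy ``either some transversal pair is already a non-chord, or $A$ and $B$ are completely joined and one builds the crossing chord by hand,'' which I find cleaner than splitting $C$ along $e$ and inducting on the two smaller cycles, where the awkward case is a side that inherits the non-chord but contains no chord.
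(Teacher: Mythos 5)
Your proof is correct and is essentially the paper's argument with the roles of chord and non-chord interchanged: the paper splits $C$ along a fixed \emph{non-chord} $\{x,x'\}$, assumes no chord crosses it, and then manufactures a transversal pair $\{z,z'\}$ (with $z$ strictly between the endpoints of a chord lying inside one side and $z'$ on the other side) exactly as you do, finishing with the same dichotomy on whether that pair is an edge. Your organization --- fixing the chord first, so that in the second case all transversal pairs are already known to be edges and the manufactured pair is automatically a chord --- saves one sub-case but changes nothing substantive.
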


\begin{proof}
  Let $\{x,x'\}$ be a non-chord of $C$ in $G$ and let $P$ and $Q$ be
  the connected components of $C\setminus \{x,x'\}$ which obviously are
  paths. Let us assume that every chord of $C$ in $G$ has both
  endpoints either in $P$ or in~$Q$ (otherwise we are done) and let
  $\{y,y'\}$ be a chord of $C$ in $G$, the endpoints of which belong,
  say, to~$P$. Let $z$ be a vertex of the subpath of $P$ delimited by
  $y$ and $y'$ such that $z \not \in \{y,y'\}$, and let $z'$ be a
  vertex of~$Q$.
  If $\{z,z'\}$ is a chord of $C$ in $G$, then $\{x,x'\}$
  and $\{z,z'\}$ are satisfying the required property.
  Otherwise, $\{z,z'\}$ is a non-chord and now $\{y,y'\}$
  and $\{z,z'\}$ are crossing.
\end{proof}

\begin{lemma}\label{cycles}
  Let $G \in \excl(D_2)$. Every cycle of $G$ is either an induced
  cycle, or it induces a clique in $G$.
\end{lemma}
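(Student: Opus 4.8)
The plan is to prove the contrapositive: if a cycle $C$ in $G \in \excl(D_2)$ is neither induced nor induces a clique, then $G$ contains $D_2$ (the diamond) as a contraction, a contradiction. So suppose $C$ is not induced; this means $C$ has at least one chord in $G$. If moreover $C$ does not induce a clique, then some pair of vertices of $C$ is a non-chord, so $C$ has at least one non-chord. We are thus exactly in the situation of \autoref{chord}, which hands us a chord $\{y,y'\}$ and a non-chord $\{z,z'\}$ that are crossing in $C$.

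The core of the argument is then to extract a diamond contraction from this crossing configuration. Since $\{y,y'\}$ and $\{z,z'\}$ are crossing, the four vertices $y,z,y',z'$ appear in this cyclic order on $C$, splitting $C$ into four arcs: $A_1$ from $y$ to $z$, $A_2$ from $z$ to $y'$, $A_3$ from $y'$ to $z'$, and $A_4$ from $z'$ back to $y$. The plan is to contract each of these four arcs down to a single vertex, producing a $4$-cycle on the contracted images $Y, Z, Y', Z'$ of $y, y', z, z'$ (here the arcs carry the cycle edges, so consecutive images are adjacent). The chord $\{y,y'\}$ survives contraction as an edge between the diagonal pair $Y$ and $Y'$, since contractions preserve adjacency between the parts containing adjacent vertices. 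Meanwhile $Z$ and $Z'$ remain non-adjacent because $\{z,z'\}$ is a non-chord; I would need to choose the arc partition carefully so that no spurious edge of $G$ joins the part containing $z$ to the part containing $z'$.

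I expect the main obstacle to be exactly this last point: ensuring the contracted $4$-cycle-plus-diagonal is genuinely a diamond and not something denser (in particular, that $Z$ and $Z'$ stay non-adjacent). The subtlety is that the arcs $A_1,\dots,A_4$ may contain many vertices, and $G$ may have additional edges among $C$'s vertices that, upon contraction, create an edge between the $Z$-part and the $Z'$-part. The clean way to handle this is to contract minimally: rather than contracting whole arcs blindly, I would route a contraction model $\varphi$ of $D_2$ into $G$ by letting $\varphi$ assign to two of the four diamond vertices the singletons $\{z\}$ and $\{z'\}$ (the would-be degree-$2$ vertices of the diamond), and assign to the other two diamond vertices connected subsets $S_y \ni y$ and $S_{y'} \ni y'$ chosen as short sub-arcs of $C$ realizing the four required adjacencies. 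Because $\{z,z'\}$ is a non-chord, $\{z\}$ and $\{z'\}$ are non-adjacent, giving precisely the missing edge of the diamond.

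Concretely, I would take $S_y$ to be the sub-arc of $C$ strictly between $z'$ and $z$ that contains $y$, and $S_{y'}$ the sub-arc strictly between $z$ and $z'$ that contains $y'$; together with $\{z\}$ and $\{z'\}$ these four sets partition $V(C)$ into connected parts. The cycle edges make $\{z\}$ adjacent to both $S_y$ and $S_{y'}$, and likewise $\{z'\}$ adjacent to both, while the chord $\{y,y'\}$ makes $S_y$ adjacent to $S_{y'}$; the only non-adjacency is between $\{z\}$ and $\{z'\}$, exactly matching the diamond. Verifying these four sets have the prescribed adjacency pattern (and no unwanted domination that would collapse the model) is the routine verification that completes the proof, so that the existence of this model of $D_2$ contradicts $G \in \excl(D_2)$.
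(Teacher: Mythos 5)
Your opening moves match the paper exactly: invoke \autoref{chord} to get a crossing chord--non-chord pair, keep the two endpoints of the non-chord as singletons, and contract the two arcs of the cycle between them onto single vertices; the chord supplies the diagonal edge and the non-chord supplies the missing one. Restricted to $G[V(C)]$ this is a perfectly valid contraction model of $D_2$, and it is precisely the first step of the paper's proof.

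The genuine gap is that a contraction model of $D_2$ in $G$ must partition \emph{all} of $V(G)$ into four connected parts, not just $V(C)$: contraction, unlike induced minor, does not permit vertex deletion. Your four sets $\{z\}$, $\{z'\}$, $S_y$, $S_{y'}$ only cover the cycle, so what you have actually shown is that $D_2$ is an induced minor of $G$ (equivalently, a contraction of $G[V(C)]$), which is not a contradiction with $G\in\excl(D_2)$. Every vertex of $G$ outside $C$ must be absorbed somewhere, and this is where the paper spends most of its effort. Components of $G$ minus the four parts that attach to $S_y$ or $S_{y'}$, or to only one of $z,z'$, can be merged harmlessly; but a component adjacent to exactly $z$ and $z'$ and to neither arc is a real obstruction: merging it into the part of $z$ (or $z'$) creates the forbidden edge between those two parts (yielding $K_4$, from which no further contraction recovers $D_2$), and it cannot be merged into $S_y$ or $S_{y'}$ without disconnecting the part. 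The paper resolves this by keeping such components as extra degree-$2$ vertices $z_1,\dots,z_k$ and then re-contracting ($v'$ with one arc-vertex, and all but one $z_i$ into $v$) to land on $D_2$ anyway. This step is not ``routine verification''; it is the heart of the proof and is missing from your proposal.
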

\begin{proof}
Let $G$ be a graph of $\excl(D_2)$ and let $C$ be a cycle of $G$.
Towards a contradiction, let us assume that $C$ has at least one chord
$\{u,u'\}$ and one non-chord $\{v,v'\}$. According to \autoref{chord} we can assume
without loss of generality that they are crossing in $C$.
Let $P$ and $Q$ be the two connected components of $C\setminus
\{v,v'\}$.
Contracting $P$ to a single vertex $x$ and $Q$ to $y$
yields a graph $G'$ such that:
\begin{itemize}
\item $v$, $x$, $v'$, $y$ lie on the cycle in this order;
\item $\{v,v'\} \not \in E(G')$; and
\item $\{x,y\} \in E(G')$ (as $\{u,u'\}$ connects the subgraphs that
  are respectively contracted to $x$ and $y$).
\end{itemize}
Notice that $G'[\{v,x,v',y\}]$ is isomorphic to $D_2$, however $G'$ may also contains other vertices.
Let us consider $G'\setminus \{v,x,v',y\}$.
While $G' \setminus \{v,v',x,y\}$ contains a connected component adjacent to $x$ or $y$, we contract it to this vertex (that we keep calling with the same name). Then, while it has a connected component adjacent to $v$ but not $v'$ (respectively $v'$ but not $v$), we contract it to $v$ (respectively $v'$), again keeping the same name for that vertex.
Finally, the only remaining connected components (if any) are adjacent to exactly $v$ and $v'$: we contract each of them to a single vertex, adjacent to $v$ and $v'$. Notice that none of these operations create an edge connecting $v$ to $v'$, thus the subset $\{v,v',x,y\}$ still induces a subgraph isomorphic to $D_2$.
Let us call $G''$ the obtained graph. As observed above, $G''$ consists of the subgraph isomorphic to $D_2$ induced by $\{v,v',x,y\}$ plus $k$ extra vertices of degree two, $z_1, \dots, z_k$, each of which is adjacent to $v$ and $v'$.
In the case where $k = 0$, $G''$ is isomorphic to $D_2$ and we reached the contradiction we were looking for.
Otherwise, we contract $\{v',y\}$ (naming the resulting vertex $v'$), which produces a complete subgraph on vertices $v,v',x$, and then we contract $\{v, z_i\}$ for every $i \in \intv{2}{k}$ (naming the resulting vertex $v$). The obtained graph is a complete graph on $v,v',x$, two vertices of which (that are $v,v'$) are adjacent to an extra vertex, $v_1$. This graph is isomorphic to $D_2$, as $x$ is not adjacent to~$v_1$, therefore we reached a contradiction.
Therefore $C$ has either
no chords or no non-chords in $G$. It is clear that in the first case
$C$ is an induced cycle of $G$ and that in the second case it induces
a clique.
\end{proof}

\begin{lemma}\label{2c}
  Let $G \in \excl(D_2)$ be a 2-connected graph. Then $G$ is either a
  cycle, or a clique.
\end{lemma}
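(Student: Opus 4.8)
The plan is to prove the slightly reformulated statement that a $2$-connected graph $G \in \excl(D_2)$ possessing two non-adjacent vertices must be an \emph{induced cycle}; since a graph in which every two vertices are adjacent is a clique, \autoref{2c} then follows immediately. So fix two non-adjacent vertices $u,v$ of $G$. As $G$ is $2$-connected, $u$ and $v$ lie on a common cycle $C$, and by \autoref{cycles} this cycle is either induced or a clique. It cannot be a clique because $u$ and $v$ are non-adjacent, so $C$ is an induced cycle (a hole) of length at least~$4$. It remains to show that $G = C$.

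Suppose not. Since $C$ is induced it has no chord, so $V(G) \setminus V(C)$ is non-empty; pick a vertex $w$ outside $C$. Using $2$-connectivity again (via Menger's theorem), there are two paths from $w$ to $C$ meeting only in $w$ and ending in two distinct vertices $p,q \in V(C)$; write $W$ for the union of these two paths minus $\{p,q\}$, a connected set adjacent to both $p$ and $q$. The vertices $p,q$ split $C$ into two arcs. The key idea is now to exhibit a contraction model of $D_2$ in $G$, thereby contradicting $G \in \excl(D_2)$. I build four connected parts: the \emph{non-adjacent pair} of the diamond is chosen so as to consist of vertices lying inside the hole $C$, so that their mutual non-adjacency is automatic because $C$ is induced; the two remaining parts are formed from the two arcs, and $W$ is thrown in so that it \emph{bridges} the arcs and makes these two parts adjacent to one another. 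Concretely, when $p$ and $q$ are non-adjacent on $C$ one may take one interior vertex of each arc as the non-adjacent pair, put $W$ together with one arc into one part and the other arc into the other part, and check that the pattern induced on the four parts is exactly $D_2$.

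The main obstacle is that $G$ may contain further vertices, and these must be absorbed into the four parts without ever creating an edge between the two parts forming the non-adjacent pair. This is precisely the bookkeeping already carried out in the proof of \autoref{cycles}: every leftover connected component adjacent to one of the two \emph{degree-three} parts is merged into it; a component adjacent to only one part of the non-adjacent pair is merged into that part, which preserves non-adjacency; and a component adjacent to \emph{both} parts of the non-adjacent pair is first contracted to a single degree-two vertex joined to both, after which one finishes exactly as in \autoref{cycles} by contracting all but one of these extra vertices into one side and merging a spine vertex, the end result again being $D_2$. I also expect a little care to be needed in the degenerate situation where $p$ and $q$ are adjacent on $C$ (so that one arc has no interior), which I would handle by an analogous ad hoc contraction; isolating this case cleanly is the only genuinely fiddly point.

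Finally, it is worth stressing why the full strength of $\excl(D_2)$, rather than merely the conclusion of \autoref{cycles}, is needed: the graph $K_{2,3}$ is $2$-connected and every one of its cycles is an induced $4$-cycle, yet it is neither a cycle nor a clique. What rules it out is exactly that $D_2 \lctr K_{2,3}$, and this is the phenomenon that the fan-and-bridge contraction above is designed to detect in general.
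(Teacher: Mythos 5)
Your argument is sound and reaches the lemma by a route that genuinely differs from the paper's in two respects, one of which is an improvement. The paper starts from a \emph{longest} cycle and must then, via \autoref{cycles}, split into two cases (the cycle induces a clique, or is induced) and kill each separately; your reformulation --- take two non-adjacent vertices, put a common cycle through them by $2$-connectivity, and note that \autoref{cycles} forces this cycle to be induced --- eliminates the clique case entirely, which is a real simplification. Your main case (a vertex $w$ outside $C$, a fan to $C$ with feet $p,q$, the bridge $W$) is essentially the paper's second case, but where you build a contraction model of $D_2$ by hand and redo all the absorption of stray components, the paper instead contracts the two $p$--$q$ arcs to single vertices, merges $w$'s part into one of them, and obtains a $4$-cycle carrying a chord and a crossing non-chord; it then invokes \autoref{cycles} as a black box, so the bookkeeping you describe is already packaged inside that lemma's proof and need not be repeated. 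The same device disposes of the degenerate case you flag as fiddly ($p$ and $q$ adjacent on $C$): reroute the edge $\{p,q\}$ of $C$ through $W$ to get a cycle having $\{p,q\}$ as a chord and, since $C$ is an induced cycle of length at least $4$, also a non-chord; \autoref{cycles} yields the contradiction at once (this is exactly Claim~1 of the paper), so no ad hoc contraction is needed. Finally, a point of precision in your model: the two ``degree-three'' parts should be the two components of $C$ minus the chosen non-adjacent pair (each of these components meets the neighbourhoods of both chosen vertices), rather than the $p$--$q$ arcs themselves, which overlap in $p$ and $q$ and each contain one vertex of the non-adjacent pair; with that correction the four parts do induce $D_2$ as you claim.
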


\begin{proof}
  We assume that $|V(G)|>1$, otherwise the result is trivial.
  Let $C$ be a longest cycle of $G$. By \autoref{cycles} the cycle $C$
  is either an induced cycle, or it induces a clique in $G$. Let us
  treat these two cases separately. For contradiction we assume that
  $V(G) \setminus V(C)$ is not empty and we call $H_1, \dots, H_t$ the
  connected components of $G \setminus C$, for some $t \in \N_{\geq
    1}$. Let us consider the graph $G'$ where $H_i$, which is
  connected, has been contracted to a single vertex~$h_i$, for every
  $i \in \intv{1}{t}$. Observe that $G'$ is 2-connected, given that $G$
  is 2-connected. Also, $G' \in \excl(D_2)$.

\smallskip
\noindent \textit{First case:} $C$ induces a clique in $G'$. Notice
  that $C$ is then a maximal clique. Let $u=h_1$. As $C$ is maximal, there is a vertex $v \in V(C)$
  such that $\{u,v\} \not \in E(G')$. Let $x$ and $y$ be two neighbors
  of $u$ on $C$ (they exist since $G'$ is 2-connected). These vertices
  define two subpaths of $C$. Let $R$ be the longest of these paths that contains
  $v$. Observe that in this case, $R$ has at least three vertices. The
  union of $\{u,x\}$, $\{u,y\}$ and $R$ is a cycle of $G'$ that we
  call~$C'$. According to \autoref{cycles}, this cycle is either induced or
  it induces a clique. As $\{u,v\} \not \in E(G')$, $C'$ cannot induce
  a clique in $G$. On the other hand, $C$ is not an induced cycle as
  every pair of vertices of $R$ are adjacent (and $|V(R)| \geq 3$ as
  mentioned earlier). We reached the contradiction we were looking for.

\smallskip
\noindent \textit{Second case:} $C$ is an induced cycle and has at
  least 4 vertices.
  Let  $i \in \intv{1}{t}$. As $G'$ is 2-connected, $h_i$ has at
  least two neighbors on $C$: let $x$ and $y$ be two of them.
  \begin{claim}\label{cl}
    $x$ and $y$ are not adjacent.
  \end{claim}
  \begin{proof}
    Let us assume that $\{x,y\} \in E(G')$. Let $C'$ be the cycle
    obtained from $C$ by replacing the edge $\{x,y\}$ by the path
    $xh_iy$. This cycle is not induced as $x,y$ are not adjacent in
    $C'$ whereas $\{x,y\} \in E(G)$. It does not induce a clique
    either since $x$ is not adjacent with the other neighbor of $y$
    on $C$ (which is not $x$ as we assume that $C$ has at least 4
    vertices). This contradicts \autoref{cycles} and therefore proves that
    $\{x,y\} \not \in E(G)$.
  \end{proof}
Every pair of distinct vertices of the cycle $C$ defines two subpaths of $C$ meeting only
at these vertices. Let $u$ and $v$ be two vertices of $C$ such that
$h_i$ has at least one neighbor in the interior of each of the
subpaths of $C$ defined by $u$ and $v$, that we will respectively call
$P$ and $Q$.  Such vertices exist, as a consequence of~\autoref{cl}.
  
  Let us consider the contraction $H$ of $G'$ obtained by contracting
  the interior path of $P$ (respectively $Q$) to a single vertex $w_P$
  (respectively $w_Q$) and then by contracting the edge connecting
  $h_i$ to $w_P$. This edge exists by definition of $u$ and $v$.
  Then $uw_Pvw_Q$ is a cycle of $H$ where $\{w_P,w_Q\}$ is a chord
  (because we contracted to $w_P$ the vertex $h_1$ which was adjacent
  to both $w_P$ and $w_Q$) and $\{u,v\}$ is a non-chord (as they were
  non-adjacent vertices of the induced cycle $C$ and that nothing has
  been contracted to them). According to \autoref{cycles}, the graph
  $H$ contains $D_2$ as contraction. As $H$ is a contraction of $G$,
  then $D_2 \lctr G$, a contradiction.

\smallskip
In both cases we reached a contradiction, therefore $V(G) \setminus
V(C)$ is empty: $G$ is a clique or an induced cycle.
\end{proof}
We are now ready to prove \autoref{dec}.
\begin{proof}[Proof of \autoref{dec}.]
  The fact that a graph of $\excl(D_2)$ is clique-cactus is a
  straightforward corollary of \autoref{2c}.
  It is easy to see that a clique-cactus graph does not contain $D_2$ as
  contraction by noticing that $D_2$ is a contraction
  of a graph if and only if it is a contraction of one of its
  2-connected components. As $D_2$ is neither a contraction of a cycle,
  nor of a clique, we get the desired result.
\end{proof}

\section{Well-quasi-ordering clique-cactus graphs}
\label{sec:wqoccg}

We proved in the previous section that graphs of $\excl(D_2)$ are
exactly the connected clique-cactus graphs. This section contains the
last part of the proof of~\autoref{main}, which is the following
lemma. We conclude this section with the proof of~\autoref{main}.

\begin{lemma}\label{wqo}
  Connected clique-cactus graphs are well-quasi-ordered by~$\lctr$.
\end{lemma}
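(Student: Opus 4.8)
The plan is to reduce to a rooted version of the statement and then run a Nash--Williams minimal-bad-sequence argument driven by the block structure of clique-cactus graphs, feeding Higman's Lemma (\autoref{higman}) at each level. For a connected clique-cactus graph $G$ and a vertex $r \in V(G)$, call $(G,r)$ a \emph{rooted} clique-cactus graph and write $(H,s) \lleq (G,r)$ when there is a contraction model $\varphi$ of $H$ in $G$ with $r \in \varphi(s)$. Such a root-respecting contraction is in particular a contraction, so $(H,s)\lleq(G,r)$ implies $H \lctr G$. Consequently, if the rooted clique-cactus graphs are wqo under $\lleq$, then the unrooted ones are wqo under $\lctr$: given any sequence $\seqb{G_i}_{i \in \N}$, root each $G_i$ at an arbitrary vertex $r_i$; a good pair $(G_i,r_i)\lleq(G_j,r_j)$ for the rooted order witnesses $G_i \lctr G_j$. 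Since contraction strictly decreases the number of edges, there is no infinite decreasing sequence, and it therefore suffices to prove that the rooted clique-cactus graphs have no infinite antichain for $\lleq$.

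First I would fix the decomposition of a rooted clique-cactus $(G,r)$ at its root. The blocks of $G$ containing $r$ are cliques and cycles meeting only at $r$; every non-root vertex $v$ of such a block is either a leaf of the block-cut tree or a cutvertex carrying a rooted clique-cactus $(G_v,v)$, strictly smaller than $G$, obtained by deleting everything on the $r$-side of $v$. A clique block on $\{r\}\cup\{v_1,\dots,v_m\}$ is then encoded by its size together with the \emph{multiset} of pendant rooted subgraphs $\{(G_{v_1},v_1),\dots,(G_{v_m},v_m)\}$. A cycle block is encoded by reading its vertices in the linear order that starts and ends at $r$ --- here the rooting is essential, as it cuts the cycle into a path --- yielding a \emph{sequence} of pendant rooted subgraphs together with the natural numbers recording the lengths of the arcs separating consecutive attachment points. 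Overall $(G,r)$ becomes a pair formed by a multiset of clique data and a multiset of cycle data over the alphabet of strictly smaller rooted clique-cactus graphs.

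Next I would assume for contradiction that the rooted graphs are not wqo and take a minimal bad sequence $\seqb{(G_i,r_i)}_{i\in\N}$. By the standard minimality step, the set of all pendant rooted subgraphs occurring across the whole sequence is wqo; clique sizes and arc lengths are naturals, hence wqo by $\leq$. Combining these alphabets through \autoref{higman} (once for the sequence attached to each cut-open cycle, and again for the multisets of blocks) and \autoref{union} produces a wqo on the encodings. The essential verification is that an embedding of encodings can always be realised as a root-respecting contraction: each pendant subgraph is contracted using the recursively obtained model; a clique of size $p+1$ is realised inside a clique of size $\geq p+1$ by contracting the surplus clique vertices, together with any pendant subgraphs skipped by the multiset embedding, into $r$; and for cycles one contracts each arc of the larger cycle down to the length prescribed by the smaller one and absorbs every attachment point skipped by the subsequence embedding into an adjacent arc, merging it with a kept neighbour or with $r$. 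Since contraction can only shorten arcs and only merge attachments, the $\leq$ relation on arc lengths and the subsequence relation on attachments are exactly what a contraction can produce, so the embedding yields $(G_i,r_i)\lleq(G_j,r_j)$ for some $i<j$, contradicting badness. This proves \autoref{wqo}, and \autoref{main} then follows by combining \autoref{direct}, \autoref{dec}, and \autoref{wqo}.

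I expect the cycle blocks to be the main obstacle. One must simultaneously respect the linear order of attachments around the cut-open cycle, allow arcs to be shortened but never lengthened, and allow skipped attachments to be absorbed without shortening an arc below what the target requires or creating a forbidden chord that would take the intermediate graph outside the intended structure. Getting the arc-length bookkeeping right so that a single Higman-style subsequence embedding on the cut-open cycle is \emph{always} realisable as a contraction --- including the boundary cases where an attachment or a whole surplus arc is merged into the root $r$ --- is where the argument needs the most care. The clique blocks, being unordered, are comparatively routine and are handled directly by \autoref{higman}.
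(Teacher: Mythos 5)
Your proposal is correct and follows essentially the same route as the paper: reduce to rooted clique-cactus graphs, decompose along the blocks containing the root into pendant rooted subgraphs, run a Nash--Williams minimal-bad-sequence argument fed by \autoref{higman} and \autoref{union}, and verify that an embedding of block encodings is realisable as a root-respecting contraction (the paper's \autoref{cycleclique}). The only divergence is bookkeeping: the paper encodes both clique and cycle blocks uniformly as sequences of rooted pendant subgraphs (trivial entries playing the role of your arc lengths), which streamlines the cycle realisability check you rightly flag as the delicate point.
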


In this section, we deal with rooted graphs. A \emph{rooted graph} is a graph
which has a distinguished vertex, called \emph{root}. The contraction
relation is extended to the setting of rooted 
graphs by requiring that a model of a rooted graph $H$ in a rooted
graph $G$ maps the root of $H$ to a connected subgraph of $G$
containing the root of~$G$.

Let us denote by $\mathcal{C}$ the class of rooted connected
clique-cactus graphs. In this class, two isomorphic graphs with a
different root are seen as different. It is clear that proving that
$(\mathcal{C}, \lctr)$ is a wqo implies \autoref{wqo}. This is what we
will do.

\paragraph{Building blocks.}
Let us define three graph constructors $\stick \colon
\mathcal{C}^\star \to \mathcal{C}$, $\cycle \colon
\mathcal{C}^\star \to \mathcal{C}$, and $\clique \colon
\mathcal{C}^\star \to \mathcal{C}$. Given a sequence $\seqb{G_0, \dots,
G_{p-1}} \in \mathcal{C}^\star$ (for some $p\in \N$), if $U$ denote
the union of the graphs $G_1, \dots, G_{p-1}$, then we define;

\begin{itemize}
\item $\stick(G_0, \dots,
G_{p-1})$ is the graph obtained from $U$ by identifying the vertices $\rt(G_0),
\dots, \rt(G_{p - 1})$;
\item $\cycle(G_0, \dots,
G_{p-1})$ is the graph obtained from $U$ by adding the edges $\{\rt(G_i), \rt(G_{(i+1) \mod
  p})\}$ for every $i \in \intv{0}{p-1}$; and
\item $\clique(G_0, \dots, G_{p-1})$ is the graph obtained from $U$ by adding the
edges $\{\rt(G_i), \rt(G_j)\}$  for every distinct $i,j \in \intv{0}{p-1}$.
\end{itemize}

The root of $\stick(G_0, \dots,
G_{p-1})$, $\cycle(G_0, \dots,
G_{p-1})$ and $\clique(G_0, \dots,
G_{p-1})$ is the vertex that is the root of~$G_0$. These constructors
will allow us to encode graphs of $\mathcal{C}$ into sequences.

We will now decompose graphs of $\mathcal{C}$ along blocks.

For every block $B$ of a graph $G$,
let $\dec_B(G)$ denote the collection of all the graphs $H$ that can
be constructed from some connected component $C$ of $G\setminus
V(B)$ by adding a new vertex $v$ adjacent to the vertices 
of $C$ that are adjacent to a vertex of $B$ in $G$, and setting
$\rt(H) = v$.

Observe that as soon as $\rt(G)\in V(B)$, every graph of $\dec_B(G)$ is
a proper contraction of~$G$. Let $\dec(G)$
denote the union of the sets $\dec_B(G)$ for every block $B$ of $G$ containing
the root of $G$.
The following observation is a consequence of~\autoref{dec}. 
\begin{observation}\label{recons}
For every graph $G \in \mathcal{C}$ there
is a (not necessarily unique) sequence $\seqb{\mathcal{G}_0, \dots, \mathcal{G}_{p-1}} \in
\dec(G)^\star$ (for
some $p \in \N$) such that $G =\cycle(\stick(\mathcal{G}_0), \dots, \stick(\mathcal{G}_{p-1}))$ or $G =
\clique(\stick(\mathcal{G}_0), \dots, \stick(\mathcal{G}_{p-1}))$.
\end{observation}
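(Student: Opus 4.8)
The plan is to pick a single block $B$ of $G$ that contains the root $r = \rt(G)$ and to read off the decomposition from the block-cut structure of $G$ around $B$. By \autoref{dec}, $G$ is a connected clique-cactus graph, so $B$ is either a cycle or a clique; this choice dictates whether the top-level constructor is $\cycle$ or $\clique$. I would write $V(B) = \{v_0, \dots, v_{p-1}\}$, taking $v_0 = r$ and, in the cycle case, listing the vertices in the cyclic order in which they appear on $B$. The degenerate cases ($G = K_1$, or $B$ a single edge) are handled separately, but the same reassembly applies there with a short sequence.

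The first thing I would establish is the standard block-decomposition fact that every connected component $C$ of $G \setminus V(B)$ is attached to $B$ through exactly one vertex of $B$. Indeed, if some $C$ had neighbours at two distinct vertices $w, w' \in V(B)$, then a path through $C$ joining these neighbours, together with a path in $B$ from $w$ to $w'$, would form a cycle meeting both $B$ and $C$; adjoining this cycle to $B$ yields a connected subgraph with no cutvertex that strictly contains $B$, contradicting the maximality of the block $B$. Consequently the components of $G \setminus V(B)$ partition according to their unique attachment vertex, and for a component $C$ attached at $v_i$, every vertex of $C$ adjacent to $B$ is in fact adjacent to $v_i$.

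Next I would match the $\dec_B$ construction to this picture. For each $i$, let $\mathcal{G}_i$ be the sequence of those graphs of $\dec_B(G) \subseteq \dec(G)$ that arise from components of $G \setminus V(B)$ attached at $v_i$ (with $\mathcal{G}_i$ the empty sequence if none is attached there, and $\stick$ of the empty sequence read as the one-vertex rooted graph). By the definition of $\dec_B$, the graph built from such a component $C$ is exactly $C$ together with a fresh root adjacent to the neighbours of $v_i$ in $C$; identifying all these fresh roots through $\stick$ therefore reproduces, as a rooted graph, the vertex $v_i$ together with every component of $G \setminus V(B)$ hanging off it, rooted at $v_i$. In particular, when $r$ is a cutvertex lying in several blocks, the blocks other than $B$ become components of $G \setminus V(B)$ attached at $v_0 = r$ and are absorbed into $\stick(\mathcal{G}_0)$.

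Finally, applying $\cycle$ (if $B$ is a cycle) or $\clique$ (if $B$ is a clique) to $\seqb{\stick(\mathcal{G}_0), \dots, \stick(\mathcal{G}_{p-1})}$ restores exactly the edges of $B$ among the vertices $v_0, \dots, v_{p-1}$ while leaving each hanging part untouched, so the resulting rooted graph is isomorphic to $G$; moreover its root is $\rt(\stick(\mathcal{G}_0)) = v_0 = r$, as required. The only genuinely delicate point is the single-attachment-vertex claim of the second step, as everything else is a direct unwinding of the definitions of $\dec_B$, $\stick$, and $\cycle$/$\clique$.
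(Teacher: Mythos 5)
The paper states this as an unproved observation ("a consequence of Lemma~\ref{dec}"), and your write-up is a correct and complete expansion of exactly the intended argument: fix the block $B$ containing the root, use Lemma~\ref{dec} to see it is a cycle or a clique, use the standard fact that each component of $G\setminus V(B)$ attaches to $B$ at a single vertex, and reassemble via $\stick$ and then $\cycle$ or $\clique$. Your handling of the degenerate cases (empty sequences, $r$ a cutvertex) and the single-attachment-vertex claim are both sound, so there is nothing to object to.
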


\paragraph{From encodings to well-quasi-ordering.} The following lemma will allow us to
work on sequences in order to show that two graphs are comparable.

\begin{lemma}\label{cycleclique}
  Let $\mathcal{G}, 
\mathcal{H} \in \mathcal{C}^\star$. If $\mathcal{H} \lctr^\star \mathcal{G}$, then
  \begin{enumerate}[(i)]
  \item $\cycle(\mathcal{H}) \lctr \cycle(\mathcal{G})$;\label{e:cycles}
  \item $\clique(\mathcal{H}) \lctr \clique(\mathcal{G})$; and\label{e:cliques}
  \item $\stick(\mathcal{H}) \lctr \stick(\mathcal{G})$.\label{e:stick}
  \end{enumerate}
\end{lemma}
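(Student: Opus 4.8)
The plan is to read off from the hypothesis an explicit rooted contraction model in each of the three cases. Write $\mathcal{H}=\seqb{H_0,\dots,H_{p-1}}$ and $\mathcal{G}=\seqb{G_0,\dots,G_{q-1}}$. By definition of $\lctr^\star$, the relation $\mathcal{H}\lctr^\star\mathcal{G}$ provides an increasing injection $\sigma\colon\intv{0}{p-1}\to\intv{0}{q-1}$ together with, for each $i$, a rooted contraction model $\mu_i\colon V(H_i)\to\powset(V(G_{\sigma(i)}))$ witnessing $H_i\lctr G_{\sigma(i)}$; in particular each $\mu_i$ is a partition of $V(G_{\sigma(i)})$ into connected parts, it preserves and reflects adjacency, and $\mu_i(\rt(H_i))$ is a connected set containing $\rt(G_{\sigma(i)})$. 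I will call the blocks $G_{\sigma(i)}$ \emph{kept} and the remaining $G_j$, with $j$ outside the image of $\sigma$, \emph{extra}. In all three constructions the non-root vertices of $H_i$ will simply be mapped by $\mu_i$, so the whole difficulty is to decide where to send the vertices of the extra blocks while keeping track of the root.

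For the stick and clique cases this should be easy, because there the roots of $\mathcal{G}$ are highly connected. In the stick case all roots of $\mathcal{G}$ are identified to a single vertex $r$ and all roots of $\mathcal{H}$ to a single vertex, so I would set the image of that root to be $\{r\}$ together with every $\mu_i(\rt(H_i))$ and every extra block; this set is connected since all these pieces contain $r$. In the clique case the roots of $\mathcal{G}$ form a clique, and I would take $\psi(\rt(H_i))=\mu_i(\rt(H_i))$ for $i\ge 1$ and let $\psi(\rt(H_0))$ be $\mu_0(\rt(H_0))$ augmented by all extra blocks; connectivity holds because each extra block is attached through its root to the clique, and any two root parts remain adjacent because they contain pairwise-adjacent roots of $\mathcal{G}$. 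In both cases one checks directly that $\psi$ is a partition into connected sets, that it preserves and reflects adjacency (the non-root parts live inside a single block and so create no spurious cross-block edges), and that the root of the target lands in $\psi$ of the root.

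The cycle case is the real obstacle, and I expect it to carry the bulk of the work. Here the roots of $\mathcal{G}$ lie on a cycle in the cyclic order $\rt(G_0),\dots,\rt(G_{q-1})$, and likewise for $\mathcal{H}$; the kept roots already appear in the correct cyclic order because $\sigma$ is increasing. The extra blocks cut the cycle into arcs between consecutive kept roots, and each arc must be absorbed into exactly one adjacent root part so that three things hold simultaneously: consecutive root parts stay adjacent, non-consecutive root parts stay non-adjacent, and the designated root $\rt(G_0)$ ends up in $\psi(\rt(H_0))$. I would absorb the arc of extra blocks strictly between $\sigma(i)$ and $\sigma(i+1)$ into $\psi(\rt(H_i))$, absorb the trailing arc after $\sigma(p-1)$ into $\psi(\rt(H_{p-1}))$, and—this is the choice forced by root preservation—absorb the leading arc $G_0,\dots,G_{\sigma(0)-1}$ into $\psi(\rt(H_0))$ rather than letting it wrap around into $\psi(\rt(H_{p-1}))$. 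Each resulting part is connected because an absorbed arc is a path of consecutive blocks joined by cycle edges and attaches to $\mu_i(\rt(H_i))$ through a cycle edge incident to $\rt(G_{\sigma(i)})$, and $\rt(G_0)$ now lies in $\psi(\rt(H_0))$ as required.

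It then remains to verify the adjacency conditions for this cycle model. Consecutive parts $\psi(\rt(H_i))$ and $\psi(\rt(H_{i+1}))$ meet along the single cycle edge joining the last block absorbed into the former to $\rt(G_{\sigma(i+1)})$, the wrap-around edge $\rt(G_{q-1})\rt(G_0)$ supplies the adjacency between $\psi(\rt(H_{p-1}))$ and $\psi(\rt(H_0))$, and since each arc is contiguous and blocks are joined only by cycle edges between consecutive roots, no two non-consecutive parts become adjacent. Finally the non-root parts $\mu_i(w)$ create no cross-block edges, since the only vertex of a block incident to the cycle is its root, which lies in the corresponding root part. Assembling these checks shows $\psi$ is a valid rooted contraction model, giving $\cycle(\mathcal{H})\lctr\cycle(\mathcal{G})$, while the stick and clique conclusions follow from the simpler models above. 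The degenerate cases $p\le 2$, where $\cycle$ is not a genuine cycle, are either immediate or excluded by the context of \autoref{recons}.
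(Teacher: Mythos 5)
Your proof is correct and follows essentially the same strategy as the paper's: absorb each block of $\mathcal{G}$ not selected by the increasing map into an adjacent root part, and contract each selected block according to the model witnessing $H_i \lctr G_{\sigma(i)}$. You phrase this via an explicit contraction model and are more careful than the paper about ensuring that the root of the target graph lands in the image of the root of the source (the paper's step of ``contracting some edge incident with $v_j$'' glosses over this), but the underlying argument is the same.
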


\begin{proof}
Let $\mathcal{H} = \seqb{H_1, \dots, H_p}$ and $\mathcal{G} = \seqb{G_1, \dots,
  G_q}$ (for some positive integers $p,q$), and let $H =
\cycle(\mathcal{H})$ and $G = \cycle(\mathcal{G})$. For the sake of readability we
will refer to $H_i$'s (respectively $G_i$'s) either as elements of
$\mathcal{H}$ (respectively $\mathcal{G}$) or as subgraphs of $H$ (respectively $G$).

If $\mathcal{H} \lctr^\star \mathcal{G}$, then there is, by definition of
$\lctr^\star$, an increasing function $\varphi \colon
\intv{1}{p} \to \intv{1}{q}$ such that $\forall i \in \intv{1}{p},\
H_i \lctr G_\varphi(i)$. Therefore there is a sequence of
edge contractions transforming $G_{\varphi(i)}$ into $H_i$ for every~$i \in \intv{1}{p}$.
Let us perform the following operations on $G$:
\begin{enumerate}
\item for every $j \in \intv{1}{q} \setminus \{\varphi(i),\ i \in
  \intv{1}{p}\}$ we contract the subgraph $G_j$ to a single vertex
  $v_j$ and we then contract some edge incident with~$v_j$;\label{s1}
\item for every $i \in \intv{1}{p}$ we contract the subgraph $G_i$ in
  order to obtain the subgraph~$H_\varphi(i)$.\label{s2}
\end{enumerate}
Observe that after step~\ref{s1}., we obtain the graph $\cycle(\mathcal{G}^-)$,
where $\mathcal{G}^-$ can be obtained from $\mathcal{G}$ be deleting elements of
indices in $\intv{1}{q} \setminus \{\varphi(i),\ i \in
  \intv{1}{p}\}$. Intuitively, we contracted the graphs that do not
  appear in $H$ and removed their attachment point from the cycle.
Then we replace in step \ref{s2}.\ every graph of $\mathcal{G}^-$ by its
corresponding contraction of $\mathcal{H}$. Therefore the graph obtained
at the end is $\cycle(\mathcal{H})$, that is $H$, as required.

The cases (\ref{e:cliques}) and (\ref{e:stick}) are very similar: $H$ can be
obtained from $G$ by following the same operations as above.
\end{proof}

\begin{proof}[Proof of \autoref{wqo}]
  Let us assume by contradiction that $(\mathcal{C}, \lctr)$ is not a
  wqo. All decreasing sequences of this quasi-order are finite (as
  each contraction decreases the number of edges by at least one),
  therefore $(\mathcal{C}, \lctr)$ contains an infinite antichains. Let
  us consider a minimal antichain $\{A_i\}_{i \in \N}$ of $(\mathcal{C}, \lctr)$.
  Let $\mathcal{B} = \bigcup_{i \in \N}\dec({A}_i)$, and let us show that
  $(\mathcal{B}, \lctr)$ is a wqo. For contradiction, let us assume that
  it is not a wqo and let $\{B_i\}_{i \in \N}$ be a minimal antichain
  of this quasi-order.
  
  By definition of $\mathcal{B}$, for every $H \in \mathcal{B}$ there is
  an integer $i\in \N$ such that $H \lctr A_i$ (for instance, an
  integer $i$ such that $H \in \dec(A_i)$). Therefore for every $i \in \N$
  there is an integer $\pi(i)$ such that $B_i \lctr A_{\pi(i)}$.  Let
  $k\in \N$ be the integer such that $\pi(k)$ is minimum. Then the following sequence
  \[
\mathcal{A}=  A_0, \dots, A_{\pi(k) - 1}, B_k, B_{k+1}, \dots
  \]
  is an infinite antichain of $(\mathcal{C}, \lctr)$. Indeed, as both
  $\{A_i\}_{i \in \N}$ and $\{B_i\}_{i \in \N}$ are antichains, every
  pair of comparable graphs of $\mathcal{A}$ involves one graph of $\{A_i\}_{i \in
    \intv{1}{\pi(k)-1}}$ and one graph of $\{B_i\}_{i \in \N_{\geq k}}$.
  Let us assume that for some $i \in \intv{0}{\pi(k) - 1}$ and $j
  \in \N_{\geq k}$ we have $A_i \leq B_j$. Then $A_i \leq B_j \leq
  A_{\pi(i)}$, a contradiction with the fact that $\{A_i\}_{i \in \N}$
  is an antichain. The case $B_j \leq A_i$ is not possible by the choice
  of~$k$. This proves that $(\mathcal{B}, \lctr)$ is a wqo.
  According to \autoref{higman}, $(\mathcal{B}^\star, \lctr^\star)$ is also a
  wqo. Let $\mathcal{B}' = \{\stick(\mathcal{H}),\ \mathcal{H} \in
  \mathcal{B}^\star\}$. Item (\ref{e:stick}) of \autoref{cycleclique} implies that any
  antichain in $(\mathcal{B}', \lctr)$ can be translated into an antichain of the same length
  in~$(\mathcal{B}^\star, \lctr^\star)$, hence $(\mathcal{B}',
  \lctr)$ is a wqo. By the same argument (now using items (\ref{e:cycles}) and
  (\ref{e:cliques}) of \autoref{cycleclique}), we deduce that the quasi-orders

  \[(\{\cycle(\mathcal{H}),\ \mathcal{H} \in \mathcal{B}'^\star\},
  \lctr)\quad \text{and}\quad (\{\clique(\mathcal{H}),\ \mathcal{H} \in \mathcal{B}'^\star\},
  \lctr)\] are well-quasi-orders.
 Therefore $\mathcal{U} =
  \{\cycle(\mathcal{H}),\ \mathcal{H} \in \mathcal{B}'^\star\} \cup
  \{\clique(\mathcal{H}),\ \mathcal{H} \in \mathcal{B}'^\star\}$ is
  well-quasi-ordered by $\lctr$, as a consequence of \autoref{union}.
  According to \autoref{recons}, we have $\{A_i\}_{i \in \N} \subseteq
  \mathcal{U}$. This contradicts the fact that $\{A_i\}_{i \in \N}$ is
  an infinite antichain. Therefore $(\mathcal{C}, \lctr)$ is a wqo and
  we are done.
\end{proof}

We would like to point out that with a proof similar to \autoref{wqo}, it is in fact possible prove that if a class $\mathcal{G}$ of 2-connected graphs is wqo by $\lctr$, then so is the class of graphs whose blocks belong to $\mathcal{G}$. The interested reader may have a look at \cite[Lemma~5]{2014arXiv1412.2407K} for a result of this flavour, see also \cite[Theorem~5]{fellows2009well} and \cite{2015arxivim}.

The proof of \autoref{main} is now immediate.
\begin{proof}[Proof of \autoref{main}]
  Let $H$ be a graph such that $\excl(H)$ is
  a wqo. Then $H \lctr D_2$, by \autoref{direct}.
On the other hand, if $H \lctr D_2$ then $\excl(H) \subseteq
\excl(D_2)$. Observe that every antichain (respectively  decreasing
sequence) of $(\excl(H), \lctr)$ is an
antichain (respectively a decreasing sequence) of $(\excl(D_2), \lctr)$.
As a consequence of \autoref{wqo} we get that $(\excl(H), \lctr)$ is a
wqo and we are done.
\end{proof}

\section{On canonical antichains}
\label{sec:ca}

In this section, we will use the following result of Ding in order to
prove~\autoref{cactr}. \autoref{fig:dingimg} illustrates the requirements of the lemma.

\begin{figure}[ht]
  \centering
  \begin{tikzpicture}
    \begin{scope}
      \foreach \j in {0, ...,2}{
        \pgfmathparse{3.25*\j}
        \begin{scope}[xshift = \pgfmathresult cm]
          \draw (0,0) node (A) {$a_{\j}$};
          \foreach \i in {0, 0.5, ..., 1.5}{
            \draw (-\i, 2) node[draw, color = black, fill = white, rectangle, minimum size = 5pt] (w{\i}) {};
            \draw[-stealth] (A) -- (w{\i});
          }
          \draw (0.75, 2) node {$\dots$};
          \draw[decorate,decoration={brace,amplitude=10pt}] (-1.75, 2.25)
          -- (1, 2.25) node[midway, yshift = 0.75cm, draw = none] {$\mathcal{W}_{\j}$};   
        \end{scope}
      }
      \draw (8.5, 1) node {\Large $\dots$};
    \end{scope}
  \end{tikzpicture}
  \caption{The situation described in \autoref{ding}. Arrows are directed towards larger elements.}
  \label{fig:dingimg}
\end{figure}
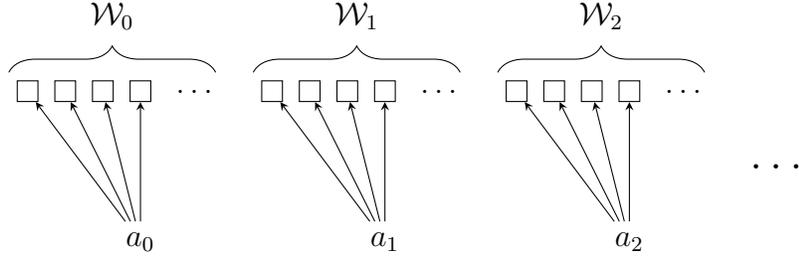

\begin{lemma}[\!\!{\cite[Theorem~1.1]{Ding20091123}}]\label{ding}
  Let $(S, \lleq)$ be a quasi-order, let $\seqb{a_i}_{i\in \N}$ be a
  sequence of elements of $S$ and let $\{\mathcal{W}_i\}_{i\in \N}$ be
  a sequence of sequences of elements of $S$.
If we have
\begin{enumerate}[(i)]
\item $\seqb{a_i}_{i\in \N}$ is a fundamental infinite antichain; and
\item for every $i\in \N$, $\mathcal{W}_i$ is a fundamental infinite antichain;
  and
\item for every $i \in \N$ and every $H \in
  \mathcal{W}_i$, $a_i \lleq H$ and there are no other comparable pairs of elements in $\bigcup_{i \in \N} \mathcal{W}_i\cup \{a_i\}_{i\in \N}$
\end{enumerate}
then $(S, \lleq)$ does not have a canonical antichain.
\end{lemma}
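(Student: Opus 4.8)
The plan is to argue by contradiction: assume $(S,\lleq)$ admits a canonical antichain $A$ and then exhibit two distinct comparable elements of $A$, contradicting that $A$ is an antichain. The whole argument rests on one reformulation of canonicity together with a single reusable lemma.

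First I would observe that one of the two implications defining a canonical antichain is automatic. If $F$ is $\lleq$-closed and $F\cap A$ is infinite, then $F$ contains the infinite antichain $F\cap A\subseteq A$, so $(F,\lleq)$ is not a wqo. Hence the only nontrivial content of ``$A$ is canonical'' is its contrapositive: for every $\lleq$-closed $F$, if $(F,\lleq)$ is \emph{not} a wqo then $F\cap A$ is infinite. This is the direction I will exploit.

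The key lemma I would then prove is: \emph{if $B$ is a fundamental infinite antichain, then $A\cap B$ is infinite.} To see this, let $F=\{x\in S:\exists\,b\in B,\ x\lleq b\}$ be the downward closure of $B$; it is $\lleq$-closed and equals $B\cup\incl(B)$ (these two parts being disjoint, as $B$ is an antichain). Since $B\subseteq F$ is an infinite antichain, $(F,\lleq)$ is not a wqo, so by the property above $F\cap A$ is infinite. Now $F\cap A=(B\cap A)\cup(\incl(B)\cap A)$, and because $B$ is fundamental, $(\incl(B),\lleq)$ is a wqo; as $\incl(B)\cap A$ is a subset of the antichain $A$ lying inside a wqo, it must be finite. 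Therefore $B\cap A$ is infinite, proving the lemma.

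Finally I would feed the three hypotheses into this lemma. By (i) the sequence $\seqb{a_i}_{i\in\N}$ is a fundamental infinite antichain, so $A\cap\{a_i\}_{i\in\N}$ is infinite; fix an index $i_0$ with $a_{i_0}\in A$. By (ii) the sequence $\mathcal{W}_{i_0}$ is a fundamental infinite antichain, so $A\cap\mathcal{W}_{i_0}$ is infinite, whence I may pick $H\in\mathcal{W}_{i_0}\cap A$ with $H\neq a_{i_0}$ (there are infinitely many candidates, at most one coinciding with $a_{i_0}$). By (iii) we have $a_{i_0}\lleq H$, so $a_{i_0}$ and $H$ are two distinct comparable elements of $A$, contradicting that $A$ is an antichain. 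I expect the main obstacle to be the key lemma, and specifically the realization that \emph{fundamentality} is exactly what upgrades ``$A$ meets the downward closure of $B$ infinitely'' to ``$A$ meets $B$ itself infinitely''; the comparabilities $a_i\lleq H$ supplied by (iii) then close the argument immediately, so that the ``no other comparable pairs'' clause is in fact not needed for this direction.
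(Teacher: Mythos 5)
Your proof is correct, and there is nothing in the paper to compare it against: the paper imports this statement verbatim from Ding (\cite[Theorem~1.1]{Ding20091123}) and gives no proof, so your argument is a genuinely self-contained derivation of the cited result. Each step checks out. The downward closure $F = B \cup \incl(B)$ of a fundamental infinite antichain $B$ is indeed $\lleq$-closed (transitivity) with the two parts disjoint (incomparability within $B$), it fails to be a wqo (it contains $B$), so canonicity of $A$ forces $F \cap A$ to be infinite, and fundamentality of $B$ makes $A \cap \incl(B)$ a finite set (an antichain inside a wqo), leaving $A \cap B$ infinite --- this is exactly the right use of fundamentality, which is the one non-obvious idea. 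Feeding in hypotheses (i) and (ii) then yields $a_{i_0} \in A$ and some $H \in \mathcal{W}_{i_0} \cap A$ with $H \neq a_{i_0}$ (legitimate, since elements of the antichain $\mathcal{W}_{i_0}$ are pairwise distinct, so at most one equals $a_{i_0}$), and $a_{i_0} \lleq H$ from (iii) contradicts $A$ being an antichain. Your closing remark is also accurate and worth making explicit: the ``no other comparable pairs'' clause of (iii) is never invoked, so you have proved a slightly stronger implication than the one stated; that rigidity clause belongs to the precise formulation of Ding's Theorem~1.1 (which, as the paper notes, describes this configuration among other obstructions, and whose full statement is a characterization), not to the one-directional consequence used here. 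In the application in \autoref{sec:ca} the extra clause is verified anyway (via \autoref{comp} and \autoref{kpp1}), so nothing downstream is affected by your weakening of the hypothesis.
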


Note that \cite[Theorem~1.1]{Ding20091123} mentions other obstructions to the existence of a canonical antichain, however we will only use that described in \autoref{ding}.
We will now define some sequences of graphs and show that they satisfy
the properties of~\autoref{ding}.

For every $p,q \in \N$, let $W_{p,q}$ be the graph obtained by adding
two non-adjacent dominating vertices to the disjoint union of
$\overline{K}_p$ and $K_{2,q}$ (see \autoref{fig:w34}). These two vertices are called
\emph{poles}, and the two vertices corresponding to the part of
$K_{2,q}$ of size 2 are called \emph{semipoles}. Observe that the
other vertices either have degree two (in which case they are adjacent
to the two poles, only), or have degree four (and they are adjacent to
both poles and both semipoles).

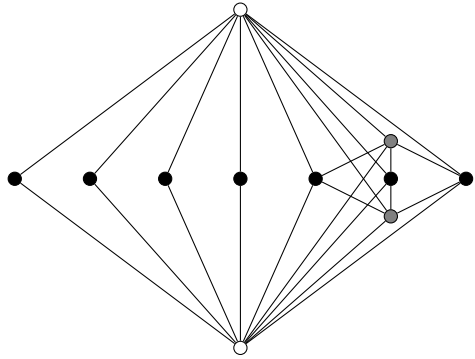
\begin{figure}[ht]
  \centering
  \begin{tikzpicture}[every node/.style = black node]
    \draw (-3,2.25) node[white node] (p1) {};
    \draw (-3,-2.25) node[white node] (p2) {};
    \foreach \i in {0, ..., 6}{
      \draw (-\i, 0) node (n{\i}) {};
      \draw (p1) -- (n{\i}) -- (p2);
    }
    \draw (-1, 0.5) node[fill = black!50] (k1) {}
    (-1, -0.5) node[fill = black!50] (k2) {};
    \draw (n{0}) -- (k1) -- (n{2}) -- (k2) -- (n{0}) (k1) -- (n{1}) -- (k2);
    \draw (p1) -- (k1) -- (p2);
    \draw (p1) -- (k2) -- (p2);
  \end{tikzpicture}
  \caption{The graph $W_{4,3}$. Poles are drawn in white and semipoles in gray.}
  \label{fig:w34}
\end{figure}

\begin{lemma}\label{comp}
  For every $p,p',q,q'\in \N_{\geq 3}$, there is a model of $W_{p,q}$
  in $W_{p',q'}$ iff $(p,q) = (p',q')$.
\end{lemma}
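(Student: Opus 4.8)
The plan is to prove both directions, with the nontrivial content lying in the ``only if'' direction. The ``if'' direction is immediate: if $(p,q)=(p',q')$ then $W_{p,q}$ and $W_{p',q'}$ are the same graph, and the identity model witnesses $W_{p,q}\lctr W_{p',q'}$. So I focus on showing that a model $\varphi$ of $W_{p,q}$ in $W_{p',q'}$ forces $(p,q)=(p',q')$.

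First I would record the structural invariants of $W_{p',q'}$ that any model must respect. The key one is the degree sequence and the role of dominating vertices. The two poles are the only vertices adjacent to everything except each other; the semipoles have degree $q'+2$ (adjacent to both poles and to all $q'$ degree-four vertices), and the remaining vertices split into $p'$ vertices of degree two and $q'$ vertices of degree four. Since a model maps each vertex of $W_{p,q}$ to a connected branch set and the degree of $v$ in $W_{p,q}$ is at most the degree of $\varphi(v)$ in $W_{p',q'}$, I would first argue that the two poles of $W_{p,q}$ must map to branch sets that are (essentially) the two poles of $W_{p',q'}$: the poles are the unique pair of non-adjacent dominating vertices, and the observation from \autoref{prelim} that there is no model of a graph with no dominating vertex in a graph with a dominating vertex should be leveraged, together with the fact that a branch set containing a pole tends to become dominating. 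The first main step, then, is to pin down where the poles and semipoles of $W_{p,q}$ go.

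Next, having located the poles and semipoles, I would count. Because each of the $p'+q'+4$ vertices of $W_{p',q'}$ lies in exactly one branch set, and $W_{p,q}$ has $p+q+4$ vertices, if $p+q+4 < p'+q'+4$ then some branch set has at least two vertices; I would derive a contradiction by showing that any branch set of size $\geq 2$ either creates a dominating vertex where there should be none, or destroys the non-adjacency of the two poles, or changes the degree-two/degree-four counts in a way incompatible with $W_{p,q}$. Concretely, the degree-two vertices (adjacent only to the two poles) and the degree-four vertices (adjacent to both poles and both semipoles) are distinguishable by their neighborhoods, so the model must preserve these two classes. This lets me read off $p=p'$ from the number of degree-two vertices and $q=q'$ from the number of degree-four vertices.

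The main obstacle I anticipate is ruling out ``collapsing'' contractions that merge a few special vertices (e.g.\ a semipole with an adjacent degree-four vertex, or a pole with a degree-two vertex) in a way that still looks locally like $W_{p,q}$ but with smaller parameters. The careful part is showing that no nontrivial branch set is possible at all, i.e.\ that $\varphi$ must be a bijection of singletons; once that is established, $\varphi$ is an isomorphism and $(p,q)=(p',q')$ follows. To handle this I would argue vertex-class by vertex-class: the two poles are forced to be singleton poles (any larger branch set containing a pole becomes adjacent to the other pole's branch set or becomes a second dominating set, contradicting the poles' non-adjacency or the degree bounds), the semipoles are then forced to be the semipoles, and finally no remaining branch set can have size $\geq 2$ without merging two vertices whose identification would either raise a degree-two vertex's degree or create an edge between the poles. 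This exhausts the possibilities and yields the equality of parameters.
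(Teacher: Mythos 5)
Your overall strategy --- show that every branch set of the model must be a singleton, so that the model is an isomorphism and the parameters coincide --- is the same rigidity strategy as the paper's proof, and your ordering (poles, then semipoles, then the rest) is a viable alternative to the paper's, which starts from the degree-two and degree-four vertices. However, as written the proposal has two concrete gaps, and they sit exactly where the work is. First, the degree constraint only goes one way: the degree of $v$ in $W_{p,q}$ is at most the degree of $\varphi(v)$ in $W_{p',q'}$, so a degree-two vertex of $W_{p,q}$ may a priori be mapped to an arbitrarily large connected set with small boundary. Two such candidates exist in $W_{p',q'}$: the whole $K_{2,q'}$ block (connected, and adjacent only to the two poles) and the complement of a pair of vertices. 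Your step ``the degree-two and degree-four classes are distinguishable by their neighborhoods, so the model preserves them and I can read off $p=p'$ and $q=q'$'' tacitly assumes the branch sets are already singletons, which is the statement being proved. The paper's proof spends most of its effort ruling out precisely these two large candidates; for instance, if some branch set were the $K_{2,q'}$ block, the four poles and semipoles of $W_{p,q}$ would need four disjoint connected subsets of degree at least $3$ inside the remaining $K_{2,p'}$, and each such subset must contain one of the two high-degree vertices of $K_{2,p'}$, which is impossible.

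Second, your pole-localization argument is only half complete. The sound half is: a branch set containing a pole $a$ of $W_{p',q'}$ together with any other vertex is adjacent to every vertex outside it (every non-pole vertex of $W_{p',q'}$ is adjacent to both poles), hence would correspond to a dominating vertex of $W_{p,q}$, which does not exist; so each pole of $W_{p',q'}$ is a singleton branch set. What is missing is identifying \emph{which} vertices of $W_{p,q}$ own these two singletons: you must argue that $\{a\}$ is adjacent to every branch set except the one containing the other pole, so its owner has exactly one non-neighbor in $W_{p,q}$, and only the poles of $W_{p,q}$ have that property (a semipole has $p+1\geq 4$ non-neighbors, the other vertices even more). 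Once both steps are supplied, your route does close cleanly and in fact avoids part of the paper's case analysis: deleting the two (singleton) poles of $W_{p',q'}$ leaves $\overline{K}_{p'}\cup K_{2,q'}$, the semipoles of $W_{p,q}$ are then forced onto singleton semipoles of $W_{p',q'}$, and the remaining vertices of $W_{p',q'}$ form an independent set, so all remaining branch sets are singletons. But these two steps must actually be carried out rather than asserted.
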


\begin{proof}
  Let us assume that there is a model $\varphi$ of $W_{p,q}$ in $W_{p',q'}$.
Let $v$ be a vertex of $W_{p,q}$ of degree two. By definition of a
model, $\varphi(v)$
must be a connected subset of degree~2 of $V(W_{p',q'})$. Let us consider the possible choices for this subset.
\smallskip

\noindent \textit{First case:} $\varphi(v)$ is of the form $V(W_{p',q'}) \setminus \{x, y\}$, for some $u,v \in V(W_{p', q'})$. Therefore, $V(W_{p', q'}) \setminus \varphi(v)$ has two vertices.
As $W_{p,q}$ has more than 3 vertices (recall that $p,q \geq 3$) which are mapped by $\varphi$ to disjoint subsets of $V(W_{p',q'})$, this case is not possible.
\smallskip

\noindent \textit{Second case:} $\varphi(v)$ is the subset of vertices inducing the subgraph $K_{2,q'}$ used in the construction of $W_{p', q'}$.
Observe that the poles and semipoles of $W_{p,q}$ (4 vertices in total, as they are distinct from $v$) all have degree at least 3. Moreover, $W_{p', q'} \setminus \varphi(v)$ is isomorphic to $K_{2,p'}$. As every connected subset of degree at least 3 of $K_{2,p'}$ must contain a pole of this graph, there are at most two such subsets that are disjoint. This contradicts the fact that the images by $\varphi$ of the four poles and semipoles of $W_{p,q}$ are disjoint connected subsets of degree at least 3 of $W_{p', q'} \setminus \varphi(v)$. Therefore, this case is not possible neither.
\smallskip

\noindent \textit{Third case:} $\varphi(v) = \{x\}$ for some vertex $x\in V(W_{p',q'}) $ of degree~2.
As we reach this case for every choice of a vertex of degree 2 of $W_{p,q}$, we deduce $p \leq p'$. The same argument applied to vertices
of degree 4 yields~$q \leq q'$.
Let us now consider poles and semipoles.

Let $u$ be a pole. Observe that according to the above remarks,
$\varphi(u)$ must be adjacent to vertices of degree two, so it should
contain a pole of $W_{p,q}$.
If $\varphi(u)$ contains in addition a vertex of degree 2 or 4 of
$W_{p,q}$, then $\varphi(u)$ is dominating. This is not possible since
$u$ is not dominating, therefore $\varphi(u) = \{v\}$ for some pole~$v$ of~$W_{p',q'}$.
Let us now assume that $u$ is a semipole of $W_{p,q}$. As previously, the
above remarks imply that $\varphi(u)$ is adjacent to vertices of
degree 4 of $W_{p',q'}$. Hence $\varphi(u)$ contains a semipole of
$W_{p',q'}$ (it cannot contain a pole as both belong to the image of
poles of $W_{p,q}$). Therefore each semipole of $W_{p,q}$ is sent to a
subset of $V(W_{p',q'})$ containing a semipole. Observe that
$\varphi(u)$ cannot contain a vertex of degree two otherwise it would
not be connected. Besides, it cannot contain a vertex of degree 4
otherwise it would be adjacent to the image by $\varphi$ of the other semipole of
$W_{p,q}$. Consequently $\varphi(u)$ contains a semipole of
$W_{p',q'}$ and no other vertex.
We proved that for every $u \in V(W_{p,q})$, the set $\varphi(u)$ is a
singleton. Therefore $|V(W_{p,q})|  = |V(W_{p',q'})|$.
 Given that $p \leq p'$ and $q \leq q'$ (as proved above), this is possible
only if $p = p'$ and $q = q'$. This concludes the proof.
\end{proof}

\begin{corollary}
  $\{W_{p,q}\}_{p,q \geq 3}$ is an antichain for $\lctr$.
\end{corollary}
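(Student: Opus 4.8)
The goal is to establish that the family $\{W_{p,q}\}_{p,q \geq 3}$ is an antichain for the contraction relation $\lctr$, that is, no two distinct members are comparable.

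The plan is to derive this directly from \autoref{comp}. Recall that $H \lctr G$ holds if and only if there is a contraction model of $H$ in $G$. Suppose, towards a contradiction, that two members of the family are comparable, say $W_{p,q} \lctr W_{p',q'}$ with $(p,q) \neq (p',q')$. By the model characterization of contraction, this means there is a model of $W_{p,q}$ in $W_{p',q'}$. But \autoref{comp} asserts that a model of $W_{p,q}$ in $W_{p',q'}$ exists if and only if $(p,q) = (p',q')$, contradicting the assumption that the two indices differ. Hence no two distinct members of the family are comparable, which is exactly the statement that the family is an antichain.

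I would write this out in a single short paragraph: take arbitrary $(p,q)$ and $(p',q')$ with $(p,q)\neq(p',q')$, invoke the equivalence between $\lctr$ and the existence of a contraction model, and then apply \autoref{comp} in the contrapositive direction to conclude $W_{p,q} \nlctr W_{p',q'}$ (and symmetrically $W_{p',q'} \nlctr W_{p,q}$). Since the indices range over $\N_{\geq 3}$, there are infinitely many such graphs, so this is moreover an \emph{infinite} antichain.

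I do not anticipate any real obstacle here: the corollary is an immediate restatement of \autoref{comp} once one recalls that $\lctr$ is equivalent to the existence of a contraction model (stated explicitly in the preliminaries). The only point requiring a word of care is that the ``iff'' in \autoref{comp} is phrased for ordered pairs $(p,q)$ and $(p',q')$ without any size assumption relating them, so comparability in \emph{either} direction is covered by a single application of the lemma with the roles of the two pairs swapped as needed; distinctness of the pairs is all that is used.
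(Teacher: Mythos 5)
Your proof is correct and is exactly the intended derivation: the paper states this as an immediate corollary of \autoref{comp} (with no written proof), relying on the equivalence between $\lctr$ and the existence of a contraction model, just as you do. No issues.
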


For every $i \in \N_{\geq 3}$, let $\mathcal{W}_i = \{W_{i,q}\}_{q \in \N_{\geq 3}}$.

\begin{lemma}\label{kpp1}
  For every $p,q,r \in \N_{\geq 3}$, we have $K_{2,r} \lctr W_{p,q}$ iff $r = p+1$.
\end{lemma}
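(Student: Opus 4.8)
The plan is to read off both directions directly from the structure of $W_{p,q}$. Recall that $W_{p,q}$ is obtained from $\overline{K}_p \cup K_{2,q}$ by adding two non-adjacent dominating vertices, the poles; consequently the only non-neighbour of a pole is the other pole. Deleting both poles leaves exactly $\overline{K}_p \cup K_{2,q}$, whose connected components are the $p$ isolated vertices coming from $\overline{K}_p$ together with the single component $K_{2,q}$ (which is connected since $q \geq 3$), that is, precisely $p+1$ components. This count of $p+1$ is the source of the value $r = p+1$.

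For the implication $r = p+1 \Rightarrow K_{2,r} \lctr W_{p,q}$, I would give an explicit contraction model of $K_{2,p+1}$. Map the two vertices of the size-$2$ part to the two poles (as singletons), and map the $p+1$ vertices of the other part bijectively onto the $p+1$ connected components of $W_{p,q}$ with its poles removed. Each component is connected and, the poles being dominating, is adjacent to both of them; and no two of these components are adjacent to one another. These three facts are exactly the requirements (i)--(iii) of a model, so $K_{2,p+1} \lctr W_{p,q}$.

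For the converse, let $\varphi$ be a model of $K_{2,r}$ in $W_{p,q}$, write $a,b$ for the size-$2$ part and $c_1, \dots, c_r$ for the other part, and first localise the poles. The key observation is that if a part $\varphi(x)$ contains a pole $P$, then, since $P$ is adjacent to every vertex but the other pole, every other part is adjacent to $\varphi(x)$ with the sole possible exception of a part equal to the singleton of the other pole; hence $x$ has at most one non-neighbour in $K_{2,r}$. Each $c_i$ has $r-1 \geq 2$ non-neighbours (all the other $c_j$), so no $\varphi(c_i)$ contains a pole; and if both poles lay in one part then that part, lacking an external other-pole singleton, would be adjacent to all others, making its vertex universal in $K_{2,r}$, which is impossible. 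Thus one pole lies in $\varphi(a)$ and the other in $\varphi(b)$, and the non-adjacency of $\varphi(a)$ and $\varphi(b)$ together with the fact that the only non-neighbour of a pole is the other pole forces $\varphi(a)$ and $\varphi(b)$ to be exactly the two poles, taken as singletons.

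It remains to count. The sets $\varphi(c_1), \dots, \varphi(c_r)$ then partition $W_{p,q}$ minus its poles into connected, pairwise non-adjacent parts. Being connected, each part lies inside one connected component; and no component can meet two distinct parts, since its connectedness would force an edge between them. Hence the parts biject with the connected components, giving $r = p+1$. I expect the only genuinely delicate step to be the localisation of the poles in the converse---in particular ruling out a pole inside a $\varphi(c_i)$, where the hypothesis $r \geq 3$ is used; once $\varphi(a)$ and $\varphi(b)$ are identified with the poles, the component count is routine.
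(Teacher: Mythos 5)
Your proof is correct, and for the converse it takes a noticeably different route from the paper's. The paper first classifies the possible branch sets of the degree-$2$ vertices of $K_{2,r}$, reusing the case analysis of connected degree-$2$ subsets from the proof of \autoref{comp}: each such branch set must be either the set $X$ inducing the $K_{2,q}$ or a singleton degree-$2$ vertex, which yields $r \leq p+1$; it then places the poles and excludes $r < p+1$ by observing that otherwise some vertex of $W_{p,q}$ would be left uncovered by the partition. You instead localize the poles first --- the observation that a branch set containing a pole has at most one non-adjacent branch set, combined with the fact that each $c_i$ has $r-1 \geq 2$ non-neighbours, forces the two poles into $\varphi(a)$ and $\varphi(b)$, and non-adjacency of these two sets then shrinks them to singletons --- and you obtain $r = p+1$ in one stroke by bijecting the remaining branch sets with the $p+1$ connected components of $W_{p,q}$ minus its poles. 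This is self-contained (no appeal to the degree classification of \autoref{comp}) and handles both inequalities simultaneously, at the price of not making explicit the special role of $X$, which the paper also exploits elsewhere. Your forward direction, an explicit model whose branch sets are the two poles and the components of $W_{p,q}$ minus the poles, is the same construction as the paper's ``contract $X$ to a single vertex,'' just phrased in the language of models.
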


\begin{proof}
Let us consider a contraction model $\varphi$ of $K_{2,r}$ in $W_{p,q}$.
We call $X$ the vertices of $W_{p,q}$ inducing the $K_{2,q}$ used in the construction of this graph.
Let us consider a vertex $u$ of degree 2 of $K_{2,r}$. Exactly as in the proof of \autoref{comp}, there are three possible choices for $\varphi(u)$. For the same reason as in this proof, the case where $\varphi(u) = V(W_{p,q}) \setminus \{x,y\}$ (for some $x,y \in V(W_{p,q})$) is not possible.
Therefore, either $\varphi(u) = X$, or $\varphi(u) = \{x\}$ for some $x \in V(W_{p,q})$ of degree 2. Since this holds for every vertex of degree 2 of $K_{2,r}$, and as $W_{p,q}$ has exactly $p$ vertices of degree 2, we deduce that $r\leq p+1$.

Let $v,w$ denote the poles of $K_{2,r}$. Because of the observation above and of the definition of a contraction model, each of $\varphi(v)$ and $\varphi(w)$ must be adjacent to some vertex of degree 2 of $W_{p,q}$ and these sets should not be adjacent. The only possible choice for them is to let $\varphi(v)$ be the singleton containing one pole of $W_{p,q}$ and $\varphi(w)$ be the singleton containing the other pole. Observe that, in the case where $p+1>r$, either one vertex of degree 2 of $W_{p,q}$ or a vertex of $X$ does not belong to the image of $\varphi$. This contradicts the definition of a model, hence this case is not possible.

The only remaining case is thus $r = p+1$.
Observe that $X$ induces a connected subgraph.
It is not hard to see that contracting $X$ to a single vertex yields~$K_{2,p+1}$.
\end{proof}

\begin{observation}
  Let $p,q \in \N_{\geq 3}$. There is no induced path on four vertices in~$W_{p,q}$, neither in $K_{2,p}$.
\end{observation}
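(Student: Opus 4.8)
The plan is to reduce both claims to a single fact: the complete bipartite graph $K_{2,n}$ contains no induced $P_4$ for any $n$. I would first dispose of $K_{2,p}$ directly. An induced $P_4$ is a connected graph on four vertices $x_1x_2x_3x_4$ whose only edges are $x_1x_2$, $x_2x_3$, and $x_3x_4$; in particular $x_1x_4$ is a non-edge. Since $K_{2,p}$ is bipartite and all its edges join its two sides, consecutive vertices along any path lie on opposite sides, so the four vertices of the path alternate between the two sides of the bipartition. This forces $x_1$ and $x_4$ onto opposite sides, whence they are adjacent, because every pair of vertices lying in opposite parts of a complete bipartite graph is joined by an edge. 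This contradicts that $x_1x_4$ is a non-edge, so $K_{2,p}$ has no induced $P_4$.

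For $W_{p,q}$, I would first argue that neither pole can appear in an induced $P_4$. Indeed, each pole is dominating, hence adjacent to all three of the other chosen vertices, whereas every vertex of an induced $P_4$ has at most two neighbors among the other three. Consequently any induced $P_4$ of $W_{p,q}$ is contained in the set of non-pole vertices. The main (though routine) step is to identify the subgraph that these vertices induce. Using the degree description recorded just before the statement, each of the $p$ vertices of degree two is adjacent only to the two poles and hence becomes isolated once the poles are removed; the two semipoles together with the $q$ vertices of degree four induce exactly the $K_{2,q}$ used in the construction, with the semipoles forming the part of size two (they are non-adjacent) and the degree-four vertices forming the part of size $q$. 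Thus the subgraph induced by the non-pole vertices is the disjoint union of $p$ isolated vertices and a copy of $K_{2,q}$, that is, $\overline{K}_p \cup K_{2,q}$.

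Finally, since an induced $P_4$ is connected, it must lie entirely within one connected component of this induced subgraph; the only component with more than one vertex is the copy of $K_{2,q}$, which contains no induced $P_4$ by the argument of the first paragraph. Therefore $W_{p,q}$ contains no induced $P_4$. I expect no genuine obstacle: the only point requiring care is reading off the adjacencies of $W_{p,q}$ correctly so as to recognise its non-pole part as $\overline{K}_p \cup K_{2,q}$, after which the bipartite parity argument settles both statements at once.
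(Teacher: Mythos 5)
Your proof is correct: the paper states this as an observation without proof, and your argument (poles are dominating so cannot lie in an induced $P_4$; the remaining vertices induce $\overline{K}_p \cup K_{2,q}$; and a complete bipartite graph has no induced $P_4$ by the parity-of-sides argument) is exactly the routine verification the authors intended. No gaps.
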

Then we successively deduce the following consequences.

\begin{corollary}
For every $p,q \in \N_{\geq 3}$, none of the graphs $W_{p,q}$ and $K_{2,p}$ contains
the gem as induced minor.
\end{corollary}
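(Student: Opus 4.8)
The plan is to leverage the preceding Observation, which asserts that neither $W_{p,q}$ nor $K_{2,p}$ contains an induced path on four vertices. Since the gem is obtained by adding a dominating vertex to a $P_4$, the four vertices of that $P_4$ induce a $P_4$ in the gem, so the gem is \emph{not} $P_4$-free. It therefore suffices to prove that the class of $P_4$-free graphs (the cographs) is closed under taking induced minors: if that holds, then a $P_4$-free graph cannot have the gem, which is not $P_4$-free, as an induced minor, and the corollary follows at once.

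An induced minor is obtained by a sequence of vertex deletions and edge contractions, and vertex deletion obviously preserves the absence of an induced $P_4$. So I would reduce the whole statement to a single elementary step: if $G$ has no induced $P_4$ and $G'$ is obtained from $G$ by contracting one edge $\{s,t\}$ into a vertex $v$, then $G'$ has no induced $P_4$ either. Since contraction does not affect adjacencies among vertices distinct from $s$ and $t$, this is the only case that requires work.

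For this step I would argue by contradiction, assuming $G'$ contains an induced $P_4$ on vertices $w,x,y,z$ appearing in this order, and producing an induced $P_4$ already in $G$. If $v \notin \{w,x,y,z\}$, then the same four vertices induce a $P_4$ in $G$, a contradiction. Otherwise exactly one of them equals $v$, and up to the symmetry of $P_4$ there are two cases. If $v$ is an endpoint of the path, then whichever of $s,t$ realises the edge from $v$ to its unique path-neighbour, together with the remaining three vertices, induces a $P_4$ in $G$ (the other two path-vertices are non-adjacent to both $s$ and $t$ because they are non-adjacent to $v$). If $v$ is internal, with path-neighbours $a$ and $b$ satisfying $a \sim v \sim b$ and $a \not\sim b$, I distinguish whether some single endpoint of $\{s,t\}$ is adjacent to both $a$ and $b$: if so, replacing $v$ by that endpoint yields an induced $P_4$ in $G$; if not, then $a$ and $b$ must attach to the two endpoints separately, and the four-vertex sequence $a, s, t, b$ is an induced $P_4$ of $G$, using the contracted edge $\{s,t\}$ as its middle edge.

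The main obstacle is precisely this internal-vertex case, where the two neighbours of $v$ on the path may attach to $s$ and $t$ separately and no single endpoint of the contracted edge reconstructs the path; the resolution is to \emph{unfold} the contraction and use the edge $\{s,t\}$ itself as the middle edge of the reconstructed $P_4$. Once closure of $P_4$-freeness under edge contraction (and hence under induced minors) is established, combining it with the Observation and the fact that the gem contains an induced $P_4$ immediately gives the corollary.
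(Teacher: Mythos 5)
Your argument is correct and takes essentially the same route as the paper: the corollary is stated there as an immediate consequence of the preceding Observation that $W_{p,q}$ and $K_{2,p}$ contain no induced $P_4$, the authors leaving implicit exactly what you verify in detail, namely that $P_4$-freeness is preserved under vertex deletions and edge contractions while the gem contains an induced $P_4$. Your case analysis for the contraction step (including the reconstruction of a $P_4$ through the contracted edge $\{s,t\}$ when the two path-neighbours of the contracted vertex attach to different endpoints) is sound.
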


\begin{corollary}\label{wiwqo}
  No graph of $\incl(\mathcal{W}_i)$ and of $\incl(\{K_{2,p}\}_{p \in \N_{\geq 3}})$ contains the gem as induced minor, for
  every $i\in \N_{\geq 3}$.
\end{corollary}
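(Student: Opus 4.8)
The plan is to deduce this corollary directly from the preceding one by transitivity of the induced minor relation. The key point is that every graph in $\incl(\mathcal{W}_i)$ is, by definition of $\incl$, a proper contraction of some $W_{i,q}$ with $q \in \N_{\geq 3}$, and likewise every graph in $\incl(\{K_{2,p}\}_{p \in \N_{\geq 3}})$ is a proper contraction of some $K_{2,p}$ with $p \in \N_{\geq 3}$. Since every contraction is in particular an induced minor, such graphs are induced minors of $W_{i,q}$ and of $K_{2,p}$ respectively, and the forbidden structure can only be inherited upward.

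First I would record the two ingredients explicitly. On the one hand, $G \lctr G'$ implies $G \linm G'$, that is, every contraction is an induced minor, as already noted in \autoref{sec:ia}. On the other hand, the relation $\linm$ is transitive: if the gem is an induced minor of $G$ and $G$ is an induced minor of $G'$, then the gem is an induced minor of $G'$ (one simply composes the corresponding sequences of vertex deletions and edge contractions).

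The argument is then immediate. Suppose for contradiction that some $G \in \incl(\mathcal{W}_i)$ contains the gem as an induced minor, and fix $q \in \N_{\geq 3}$ with $G \lctr W_{i,q}$. By the first ingredient $G \linm W_{i,q}$, and then by transitivity the gem is an induced minor of $W_{i,q}$, contradicting the preceding corollary, which asserts that $W_{i,q}$ does not contain the gem as an induced minor. Running the same reasoning with an arbitrary element of $\incl(\{K_{2,p}\}_{p \in \N_{\geq 3}})$ in place of $G$ and the corresponding $K_{2,p}$ in place of $W_{i,q}$ settles the second half of the statement.

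I expect essentially no difficulty to overcome here: the entire combinatorial content has already been isolated in the preceding corollary, which itself rests on the absence of an induced $P_4$ in $W_{p,q}$ and $K_{2,p}$. The only point requiring care is the \emph{direction} of the inheritance—one must use that the graphs collected by $\incl$ lie below the antichain in the contraction order, so that they are induced minors of the $W_{i,q}$ and $K_{2,p}$, and a hypothetical gem is therefore propagated upward to graphs where we already know it cannot occur.
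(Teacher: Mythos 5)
Your proposal is correct and matches the paper's (implicit) reasoning: the paper simply states that the corollary is "successively deduced" from the fact that $W_{p,q}$ and $K_{2,p}$ contain no gem as induced minor, and the deduction is exactly the one you spell out — elements of $\incl(\cdot)$ are contractions, hence induced minors, of the $W_{i,q}$ and $K_{2,p}$, so a gem would propagate upward by transitivity of the induced minor relation. No issues.
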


The following observation will allow us to use \autoref{gemfree},
which deals with induced minors.
\begin{observation}\label{imctr}
  Let $H$ and $G$ be two graphs. If both of them have a dominating vertex, then $H$ is a contraction of $G$ iff $H$ is an induced minor of~$G$.
\end{observation}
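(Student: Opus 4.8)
The forward direction is immediate: a contraction of $G$ is obtained using edge contractions only, which is a special case of the operations (vertex deletions together with edge contractions) defining an induced minor. Hence $H \lctr G$ trivially implies $H \linm G$, and this direction needs no appeal to the dominating vertices.

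For the converse, suppose $H \linm G$ and fix an induced-minor model of $H$ in $G$: a function $\varphi \colon V(H) \to \powset(V(G))$ whose images are connected and pairwise disjoint and satisfy, for all $u,v \in V(H)$, that $u$ and $v$ are adjacent in $H$ iff $\varphi(u)$ and $\varphi(v)$ are adjacent in $G$. The difference with a contraction model is that the \emph{deleted} vertices $R = V(G) \setminus \bigcup_{v} \varphi(v)$ need not be empty. The plan is to redistribute the vertices of $R$ among the branch sets so as to obtain a genuine contraction model; this requires that (a) every branch set stays connected and (b) no adjacency between two branch sets is created unless the corresponding vertices of $H$ are already adjacent.

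The two dominating vertices are exactly what make this redistribution safe. Let $d$ be a dominating vertex of $G$ and $d_H$ a dominating vertex of $H$. First I would deal with $d$ itself: if $d \in R$, add $d$ to $\varphi(d_H)$. Since $d$ is adjacent to every vertex of $G$, the enlarged set remains connected, and since $d_H$ is adjacent to every other vertex of $H$, no forbidden adjacency can appear. After this step some branch set, say $\varphi(w)$, contains $d$; note that $w$ is then necessarily a dominating vertex of $H$, because $\varphi(w)$, containing $d$, is adjacent to every other (nonempty) branch set. I would then dump every remaining vertex of $R$ into $\varphi(w)$: each such vertex is adjacent to $d \in \varphi(w)$, so connectivity is preserved, and any new adjacency created involves $w$, which is dominating, so condition (b) holds automatically. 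The branch sets now partition $V(G)$ and form a contraction model of $H$ in $G$, giving $H \lctr G$.

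The only genuinely delicate point — and thus the main obstacle — is verifying condition (b): that absorbing the deleted vertices never creates a model-adjacency with no counterpart in $H$. The whole argument hinges on channelling all absorptions through a single branch set whose associated vertex of $H$ is dominating, so that any adjacency this set acquires is one that $H$ already possesses; the hypothesis that \emph{both} graphs have a dominating vertex is used precisely here, to locate such a branch set and to guarantee its connectivity after absorption.
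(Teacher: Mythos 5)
Your proof is correct. The paper states this as an observation and supplies no proof of its own, so there is nothing to compare against; your argument --- absorbing the deleted vertices of the induced-minor model into the branch set containing a dominating vertex of $G$, whose corresponding vertex of $H$ is forced to be dominating --- is complete and is surely the intended justification.
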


\begin{lemma}[\!\!\cite{2015arxivim}]\label{gemfree}
Graphs not containing the gem as induced minor are wqo by the induced minor relation.
\end{lemma}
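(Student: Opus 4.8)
The plan is to combine a structural description of $\gem$-induced-minor-free graphs with the standard well-quasi-ordering machinery, namely the minimal bad sequence technique \cite{Nash63onwe} together with \autoref{higman} and \autoref{union}; throughout I write $\linm$ for the induced minor relation. Since the $\gem$ is connected, a graph contains it as an induced minor iff one of its connected components does, so a disjoint union $H_1 \cup \cdots \cup H_k$ of connected graphs is an induced minor of $G_1 \cup \cdots \cup G_m$ exactly when the $H_i$ can be injected into the $G_j$ with each $H_i \linm G_{\sigma(i)}$. Applying \autoref{higman} to this (multiset) ordering, it suffices to prove that the \emph{connected} $\gem$-induced-minor-free graphs are wqo by $\linm$. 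This is the first reduction I would carry out.

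The second ingredient handles dominating vertices. I would observe that if $v$ is a dominating vertex of a connected graph $G$, then any induced minor model of $P_4$ in $G-v$ extends, by adjoining $\{v\}$ as a fifth branch set, to an induced minor model of the $\gem$ in $G$: the apex $\{v\}$ is adjacent to all four branch sets, and the required non-edges are precisely those of the $P_4$. Hence a $\gem$-induced-minor-free graph $G$ with a dominating vertex $v$ has $G-v$ free of $P_4$ as an induced minor. As $P_4$ is an induced subgraph, hence an induced minor, of the $\gem$, the $P_4$-induced-minor-free graphs form a subclass amenable to the same (simpler) analysis; I would establish, as the base case of the argument, that they are wqo by $\linm$ using their restricted structure. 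Encoding each dominating-vertex graph $G$ by $(v, G-v)$ and noting that $G-v \linm G'-v'$ implies $G \linm G'$ once the dominating vertex is added back, the graphs possessing a dominating vertex are then wqo.

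The remaining and hardest case is connected $\gem$-induced-minor-free graphs \emph{without} a dominating vertex, where a genuine structure theorem is required. Here I would use modular decomposition. A parallel (disconnected) node returns to the component reduction above; at a series node $G = G_1 \vee G_2$ with both parts nonempty, every vertex of $G_1$ dominates $G_2$ and conversely, so by the observation of the previous paragraph neither part may contain $P_4$ as an induced minor, placing both in the already-understood $P_4$-induced-minor-free class. The crux is to control the \emph{prime} nodes: one must show that the prime quotients appearing in the modular decomposition of $\gem$-induced-minor-free graphs form a class that is itself wqo by $\linm$ (intuitively, that under the dominating-vertex-free constraint these quotients are sufficiently path- or cycle-like), and, most delicately, that the decomposition is \emph{compatible} with $\linm$, so that an appropriate embedding of labelled decomposition trees yields an actual induced minor embedding of the graphs.

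Finally, with the node labels ranging over wqo classes (the $P_4$-induced-minor-free pieces at series nodes, the controlled prime quotients, and recursively the children at each node), I would run a minimal bad sequence argument over the labelled decomposition trees, invoking \autoref{higman} to treat the sequences of children at each node and \autoref{union} to amalgamate the different node types. I expect the main obstacle to be exactly the structural analysis of the prime, dominating-vertex-free pieces, together with the proof that a tree-embedding induces a graph-level induced minor embedding; by contrast, the reduction to the $P_4$-induced-minor-free subclass and the application of the wqo machinery are comparatively routine once that structure is established.
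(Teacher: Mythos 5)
You should first note that the paper does not actually prove \autoref{gemfree}: it is imported wholesale from \cite{2015arxivim} and used as a black box, so there is no internal proof to compare against, and a complete argument would have to reproduce the structural analysis of that reference. Your outer reductions are essentially fine. The reduction to connected graphs works, although your ``exactly when'' characterization of induced minors of disjoint unions is false as stated (several components of $H$ may be extracted from a single component of $G$ by vertex deletions, e.g.\ $2K_1 \linm P_3$); only the ``if'' direction is needed to push wqo from connected graphs to all graphs via \autoref{higman}, so this does not hurt you. The dominating-vertex observation is correct: adjoining $\{v\}$ as a branch set to a $P_4$-model in $G-v$ does give a $\gem$-model, so $G-v$ must be $P_4$-induced-minor-free, i.e.\ a cograph, and such graphs are wqo by Damaschke's theorem.

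The genuine gap is exactly where you locate it yourself: the prime, dominating-vertex-free case. You assert that ``one must show that the prime quotients \ldots{} form a class that is itself wqo by $\linm$ (intuitively, \ldots{} sufficiently path- or cycle-like)'' and that the decomposition is ``compatible'' with $\linm$, but you give no argument for either claim, and neither is routine. Establishing the structure of $\gem$-induced-minor-free graphs is precisely the main technical content of \cite{2015arxivim}; it cannot be waved in as an intuition. Moreover, compatibility of modular decomposition with induced minors is genuinely delicate: contracting an edge or deleting a vertex can destroy or merge modules, so an embedding of labelled decomposition trees does not automatically yield an induced-minor embedding of the graphs, and conversely an induced-minor relation between two graphs need not respect their decomposition trees. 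As it stands, your text is a plausible roadmap whose hardest leg --- the structure theorem for the prime pieces and the transfer lemma from tree embeddings to graph embeddings --- is entirely missing, so the proof is incomplete.
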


 The following corollaries are direct consequences of \autoref{gemfree},
 \autoref{imctr} and~\autoref{wiwqo}.
 \begin{corollary}\label{k2wqo}
   $\incl(\{K_{2,p}\}_{p \in \N_{\geq 3}})$ is wqo by $\lctr$.
 \end{corollary}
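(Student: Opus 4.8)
The plan is to prove \autoref{k2wqo} as a routine combination of the three ingredients assembled just above it, namely \autoref{gemfree}, \autoref{imctr}, and \autoref{wiwqo}. The statement to establish is that $(\incl(\{K_{2,p}\}_{p \in \N_{\geq 3}}), \lctr)$ is a wqo, so I first fix an arbitrary sequence $\seqb{G_n}_{n\in\N}$ of graphs from $\incl(\{K_{2,p}\}_{p \in \N_{\geq 3}})$ and aim to find indices $i<j$ with $G_i \lctr G_j$.

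The key observation is that every element $G$ of $\incl(\{K_{2,p}\}_{p \in \N_{\geq 3}})$ is (by definition of $\incl$) a \emph{proper} contraction of some $K_{2,p}$, and $K_{2,p}$ has a dominating vertex for $p \geq 2$ only in a limited sense—so the honest route is to use the gem-free hypothesis directly. By \autoref{wiwqo}, no graph in $\incl(\{K_{2,p}\}_{p \in \N_{\geq 3}})$ contains the gem as an induced minor. Hence the whole set $\incl(\{K_{2,p}\}_{p \in \N_{\geq 3}})$ is a subclass of the class of gem-induced-minor-free graphs, which is wqo by the \emph{induced minor} relation by \autoref{gemfree}. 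Applying this to the sequence $\seqb{G_n}_{n\in\N}$ yields indices $i<j$ such that $G_i$ is an induced minor of $G_j$.

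The remaining step is to upgrade this induced-minor comparability to a \emph{contraction} comparability, and this is where \autoref{imctr} does the work, so I would check its hypothesis carefully: \autoref{imctr} gives $G_i \lctr G_j$ from ``$G_i$ is an induced minor of $G_j$'' provided both $G_i$ and $G_j$ have a dominating vertex. Thus the one thing I must verify is that every graph in $\incl(\{K_{2,p}\}_{p \in \N_{\geq 3}})$ has a dominating vertex. This is the main (though still easy) obstacle. Each such graph is a contraction of some $K_{2,p}$; since $K_{2,p}$ is connected and its two degree-$p$ vertices together dominate everything, any contraction that merges at least one edge produces a vertex adjacent to everything else—indeed, by \autoref{ctr} and the analysis of the infinite-antichain section, every proper contraction of $K_{2,p}$ lies in $\mathcal{D}\cup\mathcal{S}$, and both $D_r$ and $K_{1,r}$ visibly have a dominating vertex. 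I would therefore record this as a one-line remark (the graphs of $\mathcal{D}\cup\mathcal{S}$ each have a dominating vertex) before invoking \autoref{imctr}.

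Putting it together: given $\seqb{G_n}_{n\in\N}$ in $\incl(\{K_{2,p}\}_{p \in \N_{\geq 3}})$, \autoref{wiwqo} and \autoref{gemfree} produce $i<j$ with $G_i$ an induced minor of $G_j$; since every $G_n$ has a dominating vertex, \autoref{imctr} converts this into $G_i \lctr G_j$. Hence the sequence is good, no infinite antichain exists, and (as decreasing sequences are already finite by the edge-counting \autoref{o:antichains}-style observation) $(\incl(\{K_{2,p}\}_{p \in \N_{\geq 3}}), \lctr)$ is a wqo. I expect the entire argument to be three or four sentences; the only place requiring genuine attention is confirming the dominating-vertex hypothesis of \autoref{imctr}, which the structural results of \autoref{sec:ia} make immediate.
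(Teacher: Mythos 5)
Your proposal is correct and is essentially the argument the paper intends: the paper states this corollary as a ``direct consequence'' of \autoref{gemfree}, \autoref{imctr} and \autoref{wiwqo}, and your write-up simply makes that chain explicit, including the one point that genuinely needs checking, namely that every graph in $\incl(\{K_{2,p}\}_{p \in \N_{\geq 3}})$ is a proper contraction of some $K_{2,p}$ and hence lies in $\mathcal{D}\cup\mathcal{S}$, all of whose members have a dominating vertex, so the hypothesis of \autoref{imctr} holds for both graphs being compared. The only cosmetic slips are the aside suggesting $K_{2,p}$ itself ``has a dominating vertex in a limited sense'' (it has none for $p\geq 2$, but this is irrelevant since only proper contractions appear in $\incl$) and the attribution of the edge-counting observation, neither of which affects the argument.
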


\begin{corollary}\label{domwqo}
  The graphs of $\incl(\mathcal{W}_i)$ with a dominating vertex are wqo
  by~$\lctr$, for every~$i \in \N_{\geq 3}$.
\end{corollary}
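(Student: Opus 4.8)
The plan is to reduce well-quasi-ordering under $\lctr$ to well-quasi-ordering under the induced minor relation $\linm$, where \autoref{gemfree} can be applied. Write $S$ for the collection of graphs of $\incl(\mathcal{W}_i)$ that have a dominating vertex; the goal is to prove that $(S, \lctr)$ is a wqo.

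First I would invoke \autoref{wiwqo}, which guarantees that no graph of $\incl(\mathcal{W}_i)$ --- and in particular no graph of $S$ --- contains the gem as an induced minor. By \autoref{gemfree}, the class of all gem-induced-minor-free graphs is well-quasi-ordered by $\linm$. Since the restriction of a wqo to any subclass is again a wqo (an antichain or a decreasing sequence in the subclass is one in the whole class), it follows that $(S, \linm)$ is a wqo.

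It then remains to transfer this from $\linm$ to $\lctr$ on $S$, and this is precisely where the hypothesis of having a dominating vertex is used. Every element of $S$ has a dominating vertex, so for any pair $G, H \in S$ the equivalence of \autoref{imctr} applies and gives $H \lctr G$ iff $H \linm G$. Hence the two quasi-orders coincide on $S$, and since $(S, \linm)$ is a wqo, so is $(S, \lctr)$. Explicitly, any infinite sequence in $S$ admits, by $(S, \linm)$ being a wqo, two indices $a<b$ with $G_a \linm G_b$; as both graphs carry a dominating vertex, \autoref{imctr} upgrades this to $G_a \lctr G_b$, so $(S, \lctr)$ has no infinite antichain, and together with the finiteness of decreasing sequences under contraction this yields the corollary. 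I do not anticipate a genuine obstacle here: the single point needing care is ensuring that both graphs in each comparison have a dominating vertex, which is exactly what the definition of $S$ and the hypothesis of \autoref{imctr} provide.
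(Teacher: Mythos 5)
Your proof is correct and follows exactly the route the paper intends: the paper states \autoref{domwqo} as a direct consequence of \autoref{gemfree}, \autoref{imctr} and \autoref{wiwqo}, and you have spelled out precisely that chain (gem-free $\Rightarrow$ wqo under induced minors, then the dominating-vertex hypothesis to identify $\linm$ with $\lctr$ on this class). No gaps.
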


\begin{lemma}\label{fund}
  $\mathcal{W}_i$ is a fundamental antichain, for every~$i \in \N_{\geq 3}$.
\end{lemma}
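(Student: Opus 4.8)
The plan is to prove that $(\incl(\mathcal{W}_i), \lctr)$ is a wqo by splitting $\incl(\mathcal{W}_i)$ into the graphs that have a dominating vertex and those that do not, and then applying \autoref{union}. The graphs of $\incl(\mathcal{W}_i)$ having a dominating vertex are already known to form a wqo by \autoref{domwqo}, so all the work goes into the dominating-vertex-free part.

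First I would analyse the structure of a graph $H \in \incl(\mathcal{W}_i)$ with no dominating vertex. Fix a contraction model $\varphi$ of $H$ in some $W_{i,q}$ and let $p_1,p_2$ be the poles of $W_{i,q}$. The key structural claim is that $\varphi$ must send both poles to singletons. Indeed, if both poles lay in the same part, that part would be adjacent to every other part (each pole is adjacent to all non-poles), yielding a dominating vertex of $H$; and if $p_1$ lay in a part $\varphi(u_1)$ containing some other vertex $w$, then, since $w$ is adjacent to $p_2$, the part $\varphi(u_1)$ would be adjacent to the part containing $p_2$ as well as to every other part, again making $u_1$ dominating in $H$. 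Hence $\varphi(p_1)=\{p_1\}$ and $\varphi(p_2)=\{p_2\}$. Every remaining part is then a connected subset of $W_{i,q}\setminus\{p_1,p_2\}=\overline{K}_i \cup K_{2,q}$; as the $i$ vertices of $\overline{K}_i$ are isolated there, each forms its own part, and the parts inside $K_{2,q}$ realise a contraction $K'$ of $K_{2,q}$.

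Consequently, every dominating-vertex-free graph of $\incl(\mathcal{W}_i)$ is obtained from $\overline{K}_i \cup K'$ by adding two non-adjacent vertices adjacent to all the others, where $K'$ is a \emph{proper} contraction of $K_{2,q}$ (properness is forced, since the poles and the $\overline{K}_i$ vertices are untouched). By the fact that proper contractions of $K_{2,q}$ belong to $\mathcal{D}\cup\mathcal{S}$, we get $K'\in\mathcal{D}\cup\mathcal{S}$. Writing $W_i(K')$ for the resulting graph, the operation $K'\mapsto W_i(K')$ is monotone for $\lctr$ (a model of $K'_1$ in $K'_2$ extends to one of $W_i(K'_1)$ in $W_i(K'_2)$ by sending poles to poles and the degree-two vertices identically). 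Since $\mathcal{D}=\{D_r\}_r$ is a chain for $\lctr$ by \autoref{ctr} and $\mathcal{S}=\{K_{1,r}\}_r$ is clearly a chain as well, their images under this monotone operation are two chains.

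Finally, the dominating-vertex-free part of $\incl(\mathcal{W}_i)$ is contained in the union of these two chains, so any of its antichains meets each chain in at most one element and therefore has at most two elements; together with the absence of infinite decreasing sequences (each contraction strictly decreases the number of edges) this shows that this part is a wqo. Combining it with \autoref{domwqo} via \autoref{union} gives that $(\incl(\mathcal{W}_i), \lctr)$ is a wqo, i.e.\ that $\mathcal{W}_i$ is fundamental. I expect the only real obstacle to be the structural step forcing the poles to singletons and preserving the $\overline{K}_i$ vertices; once that is established, the remainder reduces to the elementary fact that a union of two chains is well-quasi-ordered.
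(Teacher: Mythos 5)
Your proof is correct and follows essentially the same route as the paper: the dominating-vertex part is dispatched by \autoref{domwqo}, and the remaining graphs are shown to form the union of two chains, namely the graphs obtained from $\mathcal{D}$ and from $\mathcal{S}$ by re-adding the two poles and the $\overline{K}_i$ vertices (the paper's classes $\mathcal{I}_0$ and $\mathcal{I}_1$). The only difference is presentational: you derive this structure from a contraction model (poles and $\overline{K}_i$ vertices forced to singletons, the rest realizing a proper contraction of $K_{2,q}$), whereas the paper tracks which edges (inner versus pole-incident) get contracted.
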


\begin{proof}
  Let $i \in \N_{\geq 3}$. We need to show that
  $(\incl(\mathcal{W}_i), \lctr)$ is a wqo.
Let us call \emph{inner edge} every edge of $W_{p,q}$ that is not
incident with a pole, for every~$p,q \in \N_{\geq 3}$.
Observe that if a graph $H \in \incl(\mathcal{W}_i)$ has been obtained
by contracting at least one edge incident with a pole, then $H$ has a
dominating vertex. According to \autoref{domwqo}, these graphs are wqo
by~$\lctr$, therefore we will here consider graphs of
$\incl(\mathcal{W}_i)$ that have been obtained by only contracting
inner edges. We call $\mathcal{I}$ this class.

We first show that $\mathcal{I}$ is the union of the two following classes:
\begin{itemize}
\item the class $\mathcal{I}_0$ of graphs that can be obtained by
  adding two non-adjacent dominating vertices to $\overline{K}_i +
  D_{q}$ for some $q \in \N_{\geq 0}$; and
\item the class $\mathcal{I}_1$ of graphs that can be obtained by
  adding two non-adjacent dominating vertices to $\overline{K}_i +
  S_q$ for some $q \in \N_{\geq 0}$.
\end{itemize}

Again we use the notion of \emph{poles} to denote the two dominating
vertices added to construct graphs of $\mathcal{I}_0$ and
$\mathcal{I}_1$. A \emph{semipole} is either a dominating vertex of
$D_q$ (when dealing with graphs of $\mathcal{I}_0$), or the dominating
vertex of $S_q$ (when dealing with graphs of $\mathcal{I}_1$).

Contracting an inner edge in $W_{i,q}$ clearly yields a graph of
$\mathcal{I}_0$. Now, observe that any further contraction of an edge
connecting a vertex of degree 4 to a semipole gives a graph of
$\mathcal{I}_0$ again. If, on the other hand, we contract the edge
connecting the two semipoles, then we get a graph of
$\mathcal{I}_1$. On a graph of $\mathcal{I}_1$, contracting an edge of the star
(used in the construction of this graph) still gives a graph of~$\mathcal{I}_1$.
Therefore~$\mathcal{I} = \mathcal{I}_0 \cup \mathcal{I}_1$.

 Let us assume that $\mathcal{I}$ is not wqo by $\lctr$. Therefore it
 has an infinite antichain. As $\mathcal{I} = \mathcal{I}_0 \cup
 \mathcal{I}_1$, one of $\mathcal{I}_0$ and $\mathcal{I}_1$ (at least)
 has an infinite antichain. Let $\mathcal{A}$ be such an infinite
 antichain.

 We now look at vertices of graphs of $\mathcal{A}$ that are neither
 poles, nor semipoles, nor have degree 2. These vertices are the
 vertices of degree 2 of the copy of $D_{q}$ or the vertices of degree
 one of the copy of $S_q$ used in the construction of the graphs of
 $\mathcal{A}$ (depending whether $\mathcal{A} \subseteq
 \mathcal{I}_0$ or $\mathcal{A} \subseteq \mathcal{I}_1$). We call
 them \emph{inner vertices}.

Let $A$ and $A'$ be two graphs of $\mathcal{A}$ such that $A$ has less
inner vertices than $A'$. These graphs exist since the elements of
$\mathcal{A}$ are distinct. Let $q$ be the number
 of inner vertices of $A$ and $q'$ the one of~$A'$.

In both cases $\mathcal{A} \subseteq \mathcal{I}_0$ and $\mathcal{A}
\subseteq \mathcal{I}_1$ we can obtain $A$ from $A'$ by contracting
$q'-q$ inner vertices of $A'$ to a~semipole.
This contradicts the fact that $\mathcal{A}$ is an antichain.
Therefore $(\mathcal{I}, \lctr)$ is a wqo. This implies that
$\mathcal{W}_i$ is fundamental, as required.
\end{proof}

We are now ready to prove \autoref{cactr}.
\begin{proof}[Proof of \autoref{cactr}]
  Let $A_i = K_{2, i+1}$ for every $i \in \N_{\geq 3}$.

By the virtue of \autoref{k2wqo}, $\{A_i\}_{i\in \N_{\geq 3}}$ is a fundamental antichain, as well as $\mathcal{W}_i$, for every $i \in \N_{\geq 3}$, according to \autoref{fund}.
Also, for every $i \in \N_{\geq 3}$, we have $A_i \lctr H$ for every $H \in \mathcal{W}_i$ (\autoref{kpp1}) and there are no other comparable pairs of elements in $\bigcup_{i \in \N_{\geq 3}} \mathcal{W}_i \cup \{A_i\}_{i \in \N_{\geq 3}}$ (\autoref{comp} and \autoref{kpp1}).

Hence these sequences of graphs satisfy the requirements of \autoref{ding}, which implies that there is no canonical antichain for the contraction relation.
\end{proof}

\section*{Acknowledgements}
\addcontentsline{toc}{section}{Acknowledgements}

The authors thank Jarosław Błasiok for
inspiring discussions about the topic of this paper.


\end{document}